\documentclass{amsart}
\usepackage{graphicx} 
\usepackage{mathrsfs}
\usepackage{hyperref}
\usepackage{amsmath, amsthm, amssymb, amscd}
\usepackage{enumerate}
\usepackage{hyperref}
\usepackage{verbatim}
\usepackage{cite}
\usepackage{color}

\DeclareMathOperator{\diam}{diam}

\theoremstyle{plain}
\newtheorem{theorem}{Theorem}

\newtheorem{corollary}[theorem]{Corollary}

\newtheorem{lemma}[theorem]{Lemma}

\newtheorem{proposition}[theorem]{Proposition}

\theoremstyle{definition}

\newtheorem{definition}[theorem]{Definition}
\newtheorem{remark}[theorem]{Remark}
\newtheorem{question}[theorem]{Question}

\numberwithin{equation}{section}
\numberwithin{theorem}{section}

\newcommand{\Gr}{\text{Gr}}
\DeclareMathOperator{\dist}{dist}

\newcommand{\RR}{\mathbb{R}}

\newcommand{\R}{\mathbb{R}}
\newcommand{\T}{\mathbb{T}}
\newcommand{\N}{\mathbb{N}}
\renewcommand{\S}{\mathbb{S}}

\newcommand{\g}{\gamma}
\newcommand{\G}{\Gamma}
\newcommand{\e}{\epsilon}
\renewcommand{\d}{\delta}

\keywords{flatness, Menger curvature, quasisphere, Reifenberg flat}

\title[Flatness, Menger curvature, and parametrization]{Flatness, Menger curvature, and parametrization}

\author{Guy C. David}
\address{Department of Mathematical Sciences\\ Ball State University\\ Muncie, IN 47306}
\email{gcdavid@bsu.edu}

\author{Vyron Vellis}
\address{Department of Mathematics\\ The University of Tennessee\\ Knoxville, TN 37966}
\email{vvellis@utk.edu}

\date{\today}

\thanks{G.C.~David was partially supported by NSF DMS grant 2054004. V.~Vellis was partially supported by NSF DMS grant 2154918.}

\subjclass[2020]{Primary 28A75; Secondary 30L10, 49Q15, 30C65}

\begin{document}

\begin{abstract}
We show that on linearly locally contractible (LLC) manifolds, the beta numbers (which describe unilateral flatness) are comparable to the theta numbers (which describe bilateral flatness), quantitatively. As an application, we show that if $M\subset\R^n$ is a compact LLC $m$-manifold with finite Menger $p$-energy for some $p>m(m+2)$, then $M$ is in fact a $C^{1,\alpha}$ manifold. We also show that the bound $m(m+2)$ is critical by constructing, for each $n\geq 3$, an LLC $n$-sphere in $\R^{n+1}$ that has finite Menger $p$-energy for every $p<m(m+2)$ but is not even quasisymmetrically equivalent to the standard $n$-sphere.
\end{abstract}

\maketitle

\section{Introduction}

Many questions in geometric measure theory concern \emph{flatness} properties of sets in $\mathbb{R}^n$: how much does a given set resemble a linear subspace? Qualitatively, this gives rise to notions like tangent planes, and quantitatively to notions like the $\beta$ numbers of Jones \cite{Jones} that we will define below.

Roughly speaking, one can ask about ``unilateral'' flatness (``how close is your set to \emph{lying in} a plane?'') or ``bilateral'' flatness (``how close is your set to \emph{being} a plane?''). Much of this paper concerns the interplay between these two notions.

Following \cite{K} for convenience, we make the following definitions (although these notions are much older; see \cite{Jones, DS93}.)
\begin{definition}
Let $E\subseteq \RR^n$ and $1\leq m \leq n$. For a closed ball $B=B^n(x,r)$, write
$$ \beta^m_E(B) = \frac{1}{r}\inf_{P\in \Gr(m,n)} \sup\{\dist(z,x+P):z\in B^n(x,r)\cap E\}$$
and
$$ \theta^m_E(B) = \frac{1}{r} \inf_{P\in \Gr(m,n)}d_H((x+P)\cap B^n(x,r), E\cap B^n(x,r)). $$
Here $\Gr(m,n)$ is the Grassmannian of $m$-dimensional planes in $\RR^n$ and $d_H$ is the Hausdorff distance.
\end{definition}

Thus, smallness of $\beta^m_E(B)$ means that the set $E$ is ``unilaterally'' flat in $B$, while smallness of $\theta^m_E(B)$ means that the set $E$ is ``bilaterally'' flat in $B$.

Flatness considerations are intimately tied to questions of \emph{parametrization}. If a set has a good parametrization, this often forces it to be flat at many locations and scales; conversely, knowing that a set is flat at many locations and scales often implies a good parametrization of (parts of) the set. A few famous examples on this theme are:
\begin{itemize}
    \item The Analyst's Traveling Salesman Theorem of Jones \cite{Jones}, which says that control on a certain $\ell^2$ sum over all $\beta^1_E$ for a planar set $E$ is equivalent to $E$ being contained in a curve of finite length.
    \item Reifenberg's Topological Disk Theorem \cite{Reifenberg}, which says that a closed set in $\mathbb{R}^n$ with all numbers $\theta^m_E$ small (for balls below a given scale) is a H\"older disk.
\end{itemize}
For many other results in this vein, see \cite{DS93, ABT, DToro, DKT}, and the references therein.

Other authors (e.g., \cite{ABT}) use slightly different definitions of $\beta$ and $\theta$. For example, the definition of $\beta^m_E$ above corresponds to that of the ``centered'' $\beta_E^{\text{ctr}}$ in \cite{ABT}. It is easy to see that this is comparable up to a factor of $2$ to the notion used in \cite{ABT}; see \cite[Remark 3.2]{ABT}.

In addition, the authors of \cite{ABT} define their bilateral $\theta$ numbers as follows, writing $B=B^n(x,r)$. (We use $\tilde{\theta}$ to distinguish from our definition.)
$$ \tilde{\theta}^m_E(B) = \frac{1}{r} \inf_{P\in \Gr(m,n)}\max\left\{ \sup_{y\in E\cap B} \dist(y,x+P), \sup_{z\in (x+P)\cap B} \dist(z,E) \right\}. $$
It is not hard to see that for all balls $B\subset \R^n$
$$ \tilde{\theta}^m_E(B) \leq \theta^m_E(B) \leq 2\tilde{\theta}^m_E(B).$$ 

It is also simple to see that for all balls $B\subset \R^n$,
\begin{equation}\label{eq:beta<theta}
\beta^m_E(B) \leq \theta^m_E(B).
\end{equation}
On the other hand, the opposite inequality here is generally false and, as we show in the next two examples, $\beta_E^m$ and $\theta_E^m$ may not even be comparable. For instance, let $E=[0,\infty)\times\R\times\{0\}$ be a half-plane in $\R^3$ and let ${\bf 0}=(0,0,0)$ be the origin. Then it is easy to see that for all $r>0$,
\[ \beta^2_E(B^3({\bf 0},r)) =0 \quad\text{and}\quad \theta^2_E(B^3({\bf 0},r)) =1. \]
For a second example, let $F\subset\R^3$ be the graph of the function $f:\R^2 \to \R$ with $f(x) = \sqrt{|x|}$. An easy computation shows that for $r$ sufficiently small,
\[ \beta^2_F(B^3({\bf 0},r)) \leq r \quad\text{and}\quad \theta^2_F(B^3({\bf 0},r)) \geq 1/2. \]

In our first example above, the set $E$ is not a manifold but has no ``cusps'', while in the second example $M$ is a manifold but has a ``cusp''. Our first result below provides topological and geometric conditions under which the inequality \eqref{eq:beta<theta} \emph{can} be reversed, up to constants. The geometric condition we need is known as \emph{linear local contractibility}.


\begin{definition}
A set $E\subseteq \mathbb{R}^n$ is said to be \emph{linearly locally contractible} (\emph{LLC} for short) with parameters $C,R>0$ if for each $x\in E$ and $0<r<R$, the set $B^n(x,r)\cap E$ is contractible in $B^n(x,Cr)\cap E$. We often write \emph{$(C,R)$-LLC} to emphasize the parameters. 
\end{definition}

Our first main theorem is then the following, which says that inequality \eqref{eq:beta<theta} can be reversed, up to constants, for LLC manifolds. (This is closely connected to, and inspired by, the notion of ``fine'' sets from \cite{K}.)

\begin{theorem}\label{thm:LLCfine}
Given $n\in\N$ and constants $C,R>0$, there exists $K=K(C,R,n)$ and $R'= R'(C,R,n)$ such that if $M\subset \R^n$ is a closed $(C,R)$-LLC $m$-manifold, then
\begin{equation}\label{eq:fine2}
 \theta_M^m(B^n(x,r)) \leq K\beta_M^m(B^n(x,2r)),\quad\text{for all $x\in M$ and $0<r<R'$.}
\end{equation}
\end{theorem}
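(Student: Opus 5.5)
Let $\beta=\beta^m_M(B^n(x,2r))$ and let $P$ be an $m$-plane through $x$ realizing it (up to a factor 2), so that $M\cap B^n(x,2r)$ lies in the $2\beta r$-neighborhood of $x+P$. Since always $\theta\le 2$, we may assume $\beta<\epsilon_0$ for a small $\epsilon_0=\epsilon_0(C,n)$; otherwise \eqref{eq:fine2} holds with $K=2/\epsilon_0$. One half of the Hausdorff distance is free, because every point of $M\cap B^n(x,r)$ is within $2\beta r$ of $x+P$. After a small adjustment near the boundary of the ball, which costs only a constant factor, the other half reduces to showing that every $z\in (x+P)\cap B^n(x,r)$ lies within $K\beta r$ of $M$. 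The plan is to prove this by a topological degree argument. Let $\pi$ be the orthogonal projection onto $x+P$. We aim to show that $\pi(M\cap B^n(x,2r))$ covers $(x+P)\cap B^n(x,r)$. Granting this, each such $z$ equals $\pi(y)$ for some $y\in M$ with $|y-z|\le 2\beta r$, which gives $\theta^m_M(B^n(x,r))\lesssim \beta$.

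To get the covering, suppose instead that some $z\in (x+P)\cap B^n(x,r)$ is missed by $\pi$ on $M\cap B^n(x,2r)$. The idea is to use LLC to build a small contraction of $M$ near $x$ that can be pushed into $P\setminus\{z\}$, and thereby contradict the fact that $M$ is an $m$-manifold. Concretely, take a small ball $B'=B^n(x,\rho)$ with $\rho= r/(4C)$, so that $M\cap B'$ contracts within $M\cap B^n(x,C\rho)\subset B^n(x,r/4)$. Take a small $m$-dimensional Euclidean ball $D\subset M$ around $x$, a chart neighborhood of diameter much less than $\rho$; it exists because $M$ is a manifold, though its size is not controlled. Its boundary $S=\partial D\cong S^{m-1}$ has nonzero fundamental class in $H_{m-1}(M\setminus\{x\})$ locally, i.e.\ it generates the local homology $H_m(M,M\setminus\{x\})\cong\mathbb{Z}$ (working with $\mathbb{Z}/2$ coefficients to avoid orientation issues). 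Now use the LLC contractions at all scales, together with the flatness, to compare this local class with the map $\pi$. The map $\pi$ sends $M\cap B^n(x,2r)\setminus\pi^{-1}(z)$ into $P\setminus\{z\}$. Iterating LLC at dyadic scales lets us build a ``coarse'' map $g$ from a neighborhood of $x$ in $M$ to $P$. This $g$ is close to $\pi$, and $\pi\circ g$ hits $z$ with nonzero local degree. The construction is the standard skeleton-by-skeleton method: triangulate $(x+P)\cap B^n(x,r)$ finely, send each vertex to a nearby point of $M$ (which needs nonemptiness of $M$ near each vertex), and extend over $k$-simplices using contractibility of small balls in $M$ with constant $C$. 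Because $\beta$ is small, the extension $\phi:(x+P)\cap B^n(x,r)\to M$ has $\pi\circ\phi$ within $C^{m}\beta r$-type distance of the identity on the boundary sphere. By degree theory $\pi\circ\phi$ then covers the inner ball. The hard part is exactly the missing vertices: if $\pi(M)$ omits $z$ we cannot place points near $z$.

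So the argument is better run in the reverse direction. Build $\phi$ only on the boundary sphere $\Sigma=\partial((x+P)\cap B^n(x,r/2))$, and first prove that $\pi(M)$ meets a $c r$-net of $\Sigma$. Then extend $\phi$ over the disk via LLC at scale $r$; this yields a point of $M$ projecting near $z$ with the correct degree. Establishing the initial ``hits everywhere'' property is where the manifold hypothesis enters. I would show that if $\pi(M\cap B^n(x,2r))$ avoids a ball $B^m(z,\delta r)$ in $P$, then $\pi$ maps $M\cap B^n(x,2r)$ into an annular region. This region retracts onto $\Sigma'$, an $(m-1)$-sphere around $z$. Combined with LLC contractibility of $M\cap B^n(x,r/C)$ in $M\cap B^n(x,r)$, the local class of $x$ in $M$ would then map to zero in $H_m$. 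But the local fundamental class of the manifold $M$ at $x$ must survive in $H_m(M\cap B, M\cap B\setminus \{x\})$. The delicate point, and the main obstacle, is to produce a cycle near $x$ whose image under $\pi$ has nonzero degree about some point. For this I would use the contraction together with the local orientation class, via the Čech/Alexander duality argument in the style of Semmes's work on LLC manifolds. The dependence of $K$ only on $C$ and $n$ comes from the fixed number of skeleton steps ($\le n$), each costing a factor $C$. Finally, $R'=R/(4C)^{n}$ ensures that all balls used are below the LLC scale.
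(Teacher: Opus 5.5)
\paragraph{Verdict: there is a gap at the key step.} Your reduction to small $\beta$ is fine, as is the free half of the Hausdorff distance and the skeleton-by-skeleton extension using LLC. That extension is essentially the proof of Borsuk's retraction lemma (Lemma~\ref{lem:retraction}), which the paper combines with Brouwer's theorem exactly as you intend.

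\paragraph{The missing density statement.} As you observe yourself, the extension needs as input that $M$ comes within $\epsilon r$ of every point of $(x+P)\cap B^n(x,\tfrac32 r)$, with $\epsilon$ small depending on $C$ and $n$. This density statement is the real content of the theorem, and the proposal does not prove it.
\begin{itemize}
\item The annulus argument gives no contradiction. That $\pi$ maps $M\cap B^n(x,2r)$ into a space homotopy equivalent to $S^{m-1}$ contradicts nothing by itself.
\item There is no reason the local class at $x$ should survive under $\pi$. Its survival would amount to $\pi|_M$ having nonzero local degree, which is exactly what must be shown.
\item You then call ``producing a cycle near $x$ whose image under $\pi$ has nonzero degree'' the main obstacle and defer it to an unspecified \v{C}ech/Alexander duality argument. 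So the key step is missing.
\end{itemize}
One precise form of your idea is the following. The normal $(n-m-1)$-spheres of radius $3\beta r$ centered at points of $(x+P)\cap B^n(x,r)$ all miss $M$. So if some $z$ had $\dist(z,M)>3\beta r$, the sphere at $x$ would be trivial in $\widetilde{H}_{n-m-1}(\R^n\setminus M)$. Proving that it is \emph{nontrivial} is where LLC and the manifold hypothesis must be used, and it fails for the cusp graph $F$ from the introduction.

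\paragraph{A separate defect in the ``reverse direction''.} Extending $\phi$ from $\Sigma$ over the disk via LLC at scale $r$ puts the extension in $M\cap B^n(x,Cr)$. That is outside $B^n(x,2r)$, where flatness is known, so the point $y$ with $\pi(y)=z$ need not satisfy $|y-z|\le 2\beta r$. Working at scale $r/C$ instead covers only a small disk about $x$, and covering the rest again requires density everywhere.

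\paragraph{How the paper obtains density.} The paper proves it by compactness (Theorem~\ref{thm:LLCflat2}). If density failed, one would have rescaled $(C,1)$-LLC manifolds $X_k$ lying within $1/k$ of planes $P_k$ in $B^n(\mathbf{0},1)$, but staying $\epsilon$ away from points $z_k\in P_k\cap B^n(\mathbf{0},1)$. These subconverge to a closed set $X$ with three properties:
\begin{itemize}
\item $X$ is an LLC homology $m$-manifold, by \cite[Proposition 2.19]{GCD};
\item $X$ lies in $P$ inside the unit ball;
\item $X$ omits a point of $P\cap B^n(\mathbf{0},1)$.
\end{itemize}
Invariance of domain for cohomology manifolds (Lemma~\ref{lem:homologyball}) forbids this.

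\paragraph{The constant $K$.} The resulting threshold $\delta$ is non-explicit, and $K=\max\{4,2/\delta\}$. So your claim that $K$ is governed by one factor of $C$ per skeleton step is also unsupported. The dependence on $C$ enters through the density threshold, which your argument leaves unproved.
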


Theorem \ref{thm:LLCfine} follows from the sharper result Theorem \ref{thm:LLCfine2} below. We remark that there is nothing special about the factor 2 in the right-hand side of \eqref{eq:fine2}. In particular, following the arguments in the proof of Theorem \ref{thm:LLCfine}, one can show that $2r$ in \eqref{eq:fine2} can be replaced by $(1+\eta)r$ with $K$ and $R$ depending now on $\eta>0$.

As our second example above shows, the  LLC assumption cannot be removed from Theorem \ref{thm:LLCfine}. It is possible, however, that the LLC condition may be replaced by an assumption that all beta numbers are uniformly small. Following \cite{MattilaVuorinen}, given integers $1\leq m< n$, and numbers $\d\in (0,1)$ and $R>0$, we say that a set $X\subset \R^n$ has the $(m,\d,R)$\emph{-linear approximation property} (abbv. $(m,\d,R)$\emph{-LAP}) if  
\[ \beta_{X}^m(B^n(x,r))  < \delta, \quad\text{for all $x\in X$ and $r\in (0,R)$}. \]
The second author has posed the following question.

\begin{question}\label{q:lapreif}
Given positive integers $1\leq m\leq n-1$, is there a parameter $\delta_m \in (0,1)$ and a function $\phi_m:[0,1] \to [0,1]$ such that $\phi_m(t)\to0$ as $t\to0$ and the following property holds: If $M$ is a closed $m$-manifold in $\mathbb{R}^n$ with the $(m,\d,R)$-LAP for some $\d\in (0,\d_m)$ and $R>0$, then for all $x\in M$ and $r\in (0,R)$
\[ \theta^m_M(B^n(x,r)) \leq (1+\phi_m(\d))\beta^m_M(B^n(x,r)).\]
\end{question}
In other words, Question \ref{q:lapreif} asks, quantitatively, the following: \emph{If $M$ is an $m$-manifold with uniformly small $\beta^m$-numbers, must it have uniformly small $\theta^m$-numbers?}

We show that the answer is positive when $m=1$.


\begin{theorem}\label{prop:LAPfor1mfds2}
Let $\G$ be a topological circle in $\mathbb{R}^n$ with the $(1,\d,R)$-LAP for some $\delta \in (0,\frac1{8\sqrt{2}})$ and $R>0$. Then for all $x\in\G$ and all $r\in (0,R)$ we have
\[ \theta^1_{\G}(B^n(x,r)) \leq (1+\tfrac12\delta^2)\beta_{\G}^1(B^n(x,r))\]
\end{theorem}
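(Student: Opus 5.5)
Fix $x\in\G$ and $r\in(0,R)$, write $B=B^n(x,r)$ and $\beta=\beta^1_\G(B)<\delta$, and let $L=x+P$ be a line (nearly) realizing $\beta$, so that $\G\cap B$ lies in the slab $S=\{z:\dist(z,L)\le \beta r\}$. Since $\theta^1_\G(B)$ is controlled by the Hausdorff distance between $L\cap B$ and $\G\cap B$, and $\G\cap B\subset N_{\beta r}(L)$ already, the only thing to show is that every point $z\in L\cap B$ lies within $(1+\tfrac12\delta^2)\beta r$ of $\G\cap B$. The plan is to show that $\G\cap B$ contains an arc joining the two ``ends'' of $B\cap S$, and hence crosses every hyperplane orthogonal to $L$ that meets $B$.

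\emph{Step 1: an arc of $\G$ through $x$ exits $B$ on both sides.} Parametrize $L$ by $t\in\R$ with $x\leftrightarrow 0$, and let $\pi$ be the projection onto this coordinate. The circle $\G$ is not contained in $B$: otherwise apply the LAP at a point of $\G$ realizing $\diam \G$ at scale slightly more than $\diam\G$ (if this is less than $R$), or more simply use smallness of $\beta$ at scale $r$ against the fact that a closed curve in a thin slab must turn around. The cleaner way is to run a continuity argument. Follow $\G$ from $x$ in both directions until it first hits $\partial B$, which gives a subarc $\g\subset\G\cap B$ containing $x$ with endpoints $a,b\in\partial B\cap S$. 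Points of $\partial B\cap S$ satisfy $|\pi(\cdot)|\ge r\sqrt{1-\beta^2}$, so each of $a,b$ lies near one of the two caps $\pi\approx\pm r$. I must rule out that both endpoints lie near the same cap, say $\pi(a),\pi(b)>0$. For this I will use the LAP at smaller scales along $\g$. If $\g$ went from $x$ toward the $+$ cap and came back, then by continuity there is a point $y\in\g$ with a ``turning'' configuration: at a scale $s\approx$ comparable to $r$ around $y$ (with $B^n(y,s)$ fitting well), $\G\cap B^n(y,s)$ contains points forcing two distinct directions, or the two branches are separated. Since both branches live in a slab of width $2\beta r$, the turning gives at a suitable point $y$ and scale $s\sim\beta r$-to-$r$ a set of points of $\G$ whose $\beta$ is at least some absolute constant $c$, with $c>\delta$ once $\delta<1/(8\sqrt2)$. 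The threshold $1/(8\sqrt2)$ should come out of this geometry. I expect this turning-point argument to be the \textbf{main obstacle}. I would first try taking $y$ to be the point of $\g$ maximizing $\pi$ on the returning subarc, and looking at the ball centered at $y$ of radius comparable to the distance to the other branch.

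\emph{Step 2: the quantitative bound.} Given that $\g$ connects the two caps, for each $t$ with $|t|\le r\sqrt{1-\beta^2}$ the intermediate value theorem gives a point $w\in\g$ with $\pi(w)=t$. Then $|w-z_t|\le\beta r$, where $z_t\in L$ has coordinate $t$. For $z\in L\cap B$ with $r\sqrt{1-\beta^2}<|t|\le r$, use the endpoint $a$ (or $b$) instead. Its distance to $z$ is at most $\sqrt{(\beta r)^2+(r-r\sqrt{1-\beta^2})^2}$. Since $1-\sqrt{1-\beta^2}\le \tfrac12\beta^2+O(\beta^4)$, this is at most $\beta r\sqrt{1+\tfrac14\beta^2+\dots}\le(1+\tfrac12\delta^2)\beta r$. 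Combining both ranges gives $d_H\le(1+\tfrac12\delta^2)\beta r$. Taking the infimum over near-optimal $L$ yields the claim.
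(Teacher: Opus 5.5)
\textbf{Step 2 is fine. Step 1 has a genuine gap, and the mechanism you sketch for it cannot work.}

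Step 2 is exactly the paper's closing computation. Once the component $\gamma$ of $\Gamma\cap B$ through $x$ is known to join the two caps, your bound $\sqrt{(\beta r)^2+(r-r\sqrt{1-\beta^2})^2}=r\sqrt{2(1-\sqrt{1-\beta^2})}=\beta r\sqrt{2/(1+\sqrt{1-\beta^2})}$ is the paper's, and it is at most $(1+\tfrac12\delta^2)\beta r$.

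Step 1 is the actual content of the theorem (the paper's Lemma~\ref{lem:curve-endpoints}). A fold-back of $\gamma$ inside the slab need not give $\beta\ge c$ at any center and any scale in $[\beta r,r]$. Let the two branches of $\gamma$ run at mutual distance $\epsilon\ll\beta r$ along a gently curved path that uses the full width of the slab, joined by a turn of size $\epsilon$. At every $y\in\gamma$ and every scale $s\gg\epsilon$, the set $\Gamma\cap B^n(y,s)$ looks like a doubled arc. At the extremal point you propose to use, it looks like a doubled half-line. In both cases the unilateral $\beta$ is tiny, because $\beta$ cannot see that the set ends or folds back. The LAP fails only at scales comparable to $\epsilon$, and nothing in the problem controls $\epsilon$. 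The same objection applies to both of your suggestions for excluding $\Gamma\subset B$: a thin closed loop in a thin slab has small $\beta$ at all scales above its width. So no argument at one scale, or at a bounded range of scales, gives Step 1. You must use the LAP at all scales down to $0$.

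\textbf{The missing idea is the nesting argument of Lemma~\ref{lem:continua}.} Suppose $y\in\Gamma$, and $\Gamma\cap B^n(y,s)$ lies within $\delta's$ of a line $\ell$ through $y$, with $\delta\le\delta'<1/\sqrt2$. If two continua $X_1,X_2\subset\Gamma\cap B^n(y,s)$ each meet both caps of $\partial B^n(y,s)\cap N_{\delta's}(\ell)$, then $y\in X_1\cap X_2$. The reason:
\begin{enumerate}
\item Fix $\lambda\in(\sqrt2\delta',1)$. Each $X_i$ must cross $B^n(y,\lambda s)$ between the two pieces of $\partial B^n(y,\lambda s)\cap N_{\delta's}(\ell)$.
\item These two pieces are $2\sqrt{\lambda^2-\delta'^2}\,s>2\delta\lambda s$ apart. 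The right side is the diameter of each cap determined by the LAP line at scale $\lambda s$.
\item Hence each new cap meets only one old piece, so the crossing property passes to $B^n(y,\lambda s)$.
\item Iterating over scales $\lambda^k s$ and using closedness gives $y\in X_i$.
\end{enumerate}

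Step 1 then goes as in the paper. Suppose both endpoints $a,b$ of $\gamma$ lie near the same cap, with $0<\pi(a)\le\pi(b)$. Choose $w'\in\gamma$ with $\pi(w')=\pi(a)/2$.
\begin{itemize}
\item Then $B^n(w',r/3)\subset B$.
\item $\Gamma\cap B^n(w',r/3)$ lies within $2\delta r=6\delta\cdot\tfrac r3$ of the line through $w'$ parallel to $L$. This is where $6\delta<1/\sqrt2$, i.e. the hypothesis on $\delta$, enters.
\item The subarcs of $\gamma$ from $x$ to $a$ and from $x$ to $b$ each contain a subarc crossing $B^n(w',r/3)$ between the caps.
\end{itemize}
These two crossing subarcs are disjoint, since their only common point $x$ lies outside $B^n(w',r/3)$. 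Yet the nesting argument forces both to contain $w'$, which is a contradiction. The case $\Gamma\subset B$ is handled the same way, using the two arcs of $\Gamma$ from $x$ to a point of $\Gamma$ farthest from $x$.
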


Along the way of proving Theorem \ref{prop:LAPfor1mfds2}, we show that topological circles with the $(1,\d,R)$-LAP for some $\delta \in (0,\frac1{8\sqrt{2}})$ are LLC in a quantitative fashion; see Proposition \ref{prop:LAPimpliesBT}. 

We remark that not only is Question \ref{q:lapreif} still open, to the best of our knowledge even a less quantitative variant is still open, namely:
\begin{question}\label{q:weaklapreif}
Given positive integers $1\leq m\leq n-1$, and $\epsilon>0$, is there a parameter $\delta=\delta(m,n,\epsilon)$ with the following property:

If $M$ is a closed $m$-manifold in $\mathbb{R}^n$ with the $(m,\d,R)$-LAP for some $R>0$, then $M$ is $(m,\epsilon, R')$-Reifenberg flat for some $R'>0$.
\end{question}

Here a closed set $X$ is $(m,\epsilon, R')$-Reifenberg flat if 
$$ \theta^m_X(B^n(x,r)) < \epsilon \text{ for all } x\in X \text{ and } r\in (0,R').$$

A positive answer to Question \ref{q:lapreif} would imply a positive answer to Question \ref{q:weaklapreif}, and in particular both have a positive answer when $m=1$.

We next give some applications of Theorem \ref{thm:LLCfine}.

\subsection{Menger curvature and parametrization}

The first application of Theorem \ref{thm:LLCfine} is towards the \emph{quasisymmetric uniformization problem}. Informally speaking, quasisymmetric mappings are homeomorphisms which distort shapes by a bounded amount, compared to bi-Lipschitz mappings which distort shapes and sizes by a bounded amount; see Section \ref{sec:prelim} for precise definition. 

The problem of classifying all metrics $d$ on $\mathbb{S}^n$ for which $(\S^n,d)$ is quasisymmetrically homeomorphic to $\S^n$ is a major open question in non-smooth analysis and can be thought as a relaxation of the Beltrami problem in the analytic theory of quasiconformal mappings; see \cite[Open problem 15.6]{Heinonen}. While completely solved when $n=1$ \cite{TuVa,HerMey}, the uniformization problem turns out to be formidable in dimensions $n\geq 2$. A breakthrough in this problem when $n=2$ was given by Bonk and Kleiner \cite{BonK} (see also \cite{LyWe,Raj}) who showed that if $(\S^2,d)$ is LLC and \emph{Ahlfors $2$-regular}, then it is quasisymmetric equivalent to $\mathbb{S}^2$. Ahlfors $2$-regularity 
asserts that there exists a measure $\mu$ on $X$ such that each ball of radius $r$ with $r$ sufficiently small has measure comparable to $r^2$. Examples of Semmes \cite{Semmes1996}, Heinonen and Wu \cite{HW}, and of the second named author and Pankka \cite{PV} show that the LLC condition and Ahlfors regularity are insufficient in dimensions $n\geq 3$. A rich collection of intriguing examples of ``quasisymmetric spheres'' exists in literature \cite{Lewis,bishop, DToro, Meyer,PW, RV,Wu,Wu2,VW2,V1}. 

The bi-Lipschitz uniformization problem is also interesting since it is intimately related to the \emph{Quasiconformal Jacobian Problem} \cite{BoHeSa}, i.e., the characterization of weights in $\R^n$ that are comparable to the Jacobian of a quasiconformal homeomorphism of $\R^n$. Sufficient conditions for local bi-Lipschitz parametrization have been given in terms of bounds of local flatness \cite{Toro2, DavidToro}, existence of flat forms in Sobolev spaces \cite{HeinonenSullivan,HeinonenKeith} and integral bounds of the curvature tensor \cite{BoLa, Fu2, MuSve, Toro}.

Here we consider the \emph{Menger curvature}, a discrete notion of curvature conceived by Menger in the 1970s that has played a very important role in geometric measure theory and, especially, higher dimensional rectifiability \cite{Leger}. In this article, we use a version of Menger curvature proposed by Lerman and Whitehouse \cite{LeWh}.

\begin{definition}
Given $n\in\mathbb{N}$, $k\in\{1,\dots,n-1\}$, and points $x_1,\dots,x_{k+2} \in \mathbb{R}^n$ that are not all the same, define
\[ \mathcal{K}(x_0,\dots,x_{m+1}) := \frac{\mathcal{H}^{m+1}(\Delta(x_0,\dots,x_{m+1}))}{(\diam{\Delta(x_0,\dots,x_{m+1})})^{m+2}},\]
where $\Delta(x_0,\dots,x_{m+1})$ is the $(m+2)$-simplex formed by the points $x_0,\dots,x_{m+1}$. If $x_1=\cdots = x_{k+2}$, then we set $\mathcal{K}(x_1,\dots,x_{k+2}) = 0$. 
\end{definition}

Fix a set $X\subset \mathbb{R}^n$ and a parameter $m\in \{0,1, \dots, n-1\}$, which we think of as the ``dimension'' of $X$. Following \cite{K}, for $p>0$, we define the geometric curvature energy
$$ \mathcal{E}^{m+2}_p(X) = \int_{X^{m+2}} \mathcal{K}(x_0, \dots, x_{m+1})^p \, d\mathcal{H}^{m(m+2)}(x_0,\dots,x_{m+1}).$$

By combining Theorem \ref{thm:LLCfine} with the results of Kolasi\'nski \cite{K} and David, Kenig, and Toro \cite{DKT}, we obtain the following:

\begin{theorem}\label{thm:kolasinski2}
Let $M$ be a compact LLC $m$-manifold in $\RR^n$. Suppose that 
$$ \mathcal{E}^{m+2}_p(M) < \infty$$
for some $p>m(m+2)$. Then $M$ is a $C^{1,\alpha}$ manifold for some $\alpha=\alpha(p,m)$.

In particular, if $M$ is a topological sphere, then $M$ is bi-Lipschitz to the standard $m$-sphere $\mathbb{S}^m$.
\end{theorem}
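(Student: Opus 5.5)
The plan is to reduce Theorem \ref{thm:kolasinski2} to Kolasi\'nski's regularity theorem for \emph{fine} sets in \cite{K}, with Theorem \ref{thm:LLCfine} supplying the missing hypothesis. As I recall, Kolasi\'nski shows that if $\Sigma\subset\R^n$ is a compact set with $\mathcal{E}^{m+2}_p(\Sigma)<\infty$ for some $p>m(m+2)$, and $\Sigma$ is \emph{$m$-fine}, then $\Sigma$ is an embedded $C^{1,\alpha}$ manifold with $\alpha=1-m(m+2)/p$. Here $m$-fine is roughly the requirement that $\theta^m_\Sigma(B^n(x,r))\le K\beta^m_\Sigma(B^n(x,2r))$ for $x\in\Sigma$ and small $r$. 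I will take that theorem as a black box.

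\textbf{Step 1: verify fineness.} Since $M$ is compact, it is closed. It is $(C,R)$-LLC for some constants $C,R$. Theorem \ref{thm:LLCfine} then yields $K,R'$ with \eqref{eq:fine2} holding for all $x\in M$ and $0<r<R'$. This is exactly the fineness condition, up to possible normalizations: \cite{K} may use $\tilde\theta$ and the uncentered $\beta$, and may state the condition at a single scale. These differences change only constants, via $\tilde\theta\le\theta\le2\tilde\theta$ and the factor-$2$ comparison of $\beta$ notions. If \cite{K} also requires an Ahlfors-type lower density bound $\mathcal{H}^m(M\cap B(x,r))\gtrsim r^m$, it should follow from the LLC manifold hypothesis. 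A small $\theta$ at some scale already gives this, because the projection of $M\cap B$ onto the approximating plane covers most of a disk by a degree argument. Alternatively, one can cite the corresponding lemma in \cite{K}, which derives it from finite energy.

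\textbf{Step 2: conclude regularity.} Applying Kolasi\'nski's theorem gives $C^{1,\alpha}$ regularity with $\alpha=\alpha(p,m)$. I expect the proof in \cite{K} to run as follows. First, finite energy with $p>m(m+2)$ forces $\beta$ to decay like $r^{\alpha}$ at all small scales, by an energy-versus-volume argument on families of well-spread simplices. Fineness then upgrades this to $\theta\lesssim r^\alpha$. Finally, the bilateral Reifenberg theorem with H\"older-decaying $\theta$ gives a $C^{1,\alpha}$ parametrization, using the results of \cite{DKT} if needed.

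\textbf{Step 3: the sphere case.} If $M$ is additionally a topological $m$-sphere, then $M$ is a compact $C^{1}$ submanifold homeomorphic to $\S^m$. A compact $C^1$ submanifold is locally bi-Lipschitz to $\R^m$ via graph charts, and compactness makes it globally bi-Lipschitz to any smooth manifold $C^1$-diffeomorphic to it. The remaining question is whether $M$ is $C^1$-diffeomorphic to the standard $\S^m$ rather than to an exotic sphere. This can be sidestepped. Smoothing does not preserve differentiable structure in general, but bi-Lipschitz equivalence is much weaker: by Sullivan, every topological manifold of dimension $\neq4$ has a Lipschitz structure that is unique up to bi-Lipschitz homeomorphism near the identity. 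So exotic smooth spheres are bi-Lipschitz to $\S^m$ when $m\ne4$. For $m=4$, the only exotic possibility would be an exotic smooth $\S^4$. Alternatively, I would use the Reifenberg-type parametrization from \cite{DKT}, which directly produces a bi-Lipschitz map to a smooth model sphere.

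The main obstacle is Step 1: matching precisely the form of fineness that \cite{K} demands, including its scales and any density assumption, with \eqref{eq:fine2}. The sphere clause in low-regularity dimension $4$ is a secondary point that I would settle by citing \cite{DKT}.
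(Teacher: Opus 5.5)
Your route (Theorem \ref{thm:LLCfine}, then Kolasi\'nski's $\beta$-decay, then \cite{DKT}) is the paper's route, but two steps do not work as written.

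\textbf{The black-box use of \cite{K}.} \eqref{eq:fine2} is not Kolasi\'nski's fineness condition up to normalization. As the paper points out, \cite{K} compares $\theta$ and $\beta$ on the \emph{same} ball. But $\beta^m_M(B^n(x,2r))$ cannot be bounded by a multiple of $\beta^m_M(B^n(x,r))$; only $\beta^m_M(B^n(x,r))\le 2\beta^m_M(B^n(x,2r))$ holds. So the doubled radius is not a change of constants, and \cite[Theorem 1]{K} cannot be quoted as a black box.

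The fix is to make your Step 2 the actual proof, which is what the paper does. \cite[Proposition 1]{K} gives $\beta^m_M(B^n(x,r))\le C_0r^\alpha$ at all small scales. Hence $\theta^m_M(B^n(x,r))\le K\beta^m_M(B^n(x,2r))\le 2^\alpha KC_0r^\alpha$, so the doubling costs only a factor $2^\alpha$, and \cite[Proposition 9.1]{DKT} then yields $C^{1,\alpha}$. This route gives $\alpha=\frac{p-m(m+2)}{(p+m(m+2))(m+1)}$ rather than the $1-m(m+2)/p$ you quote. That exponent suffices, since the statement only asks for some $\alpha(p,m)$.

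\textbf{The lower density bound.} The bound $\mathcal{H}^m(M\cap B^n(x,r))\gtrsim r^m$, for all $x\in M$ and small $r$, is a genuine input to \cite[Proposition 1]{K}. It is what turns one badly non-flat configuration at scale $r$ into a lower bound on the energy, by giving mass to small balls around the vertices of a fat simplex. Your suggestion to derive it from smallness of $\theta$ by a degree argument is circular here. Smallness of $\theta$ at every point and every small scale is available only after the $\beta$-decay, and that decay already used the density bound. The paper obtains the bound independently from the topology, via Kinneberg \cite[Corollary 1.4]{Kinneberg}: LLC $m$-manifolds are lower Ahlfors $m$-regular.

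\textbf{The sphere clause.} The paper gives no argument beyond ``in particular''. Your reasoning for $m\neq 4$ is fine. The fallback for $m=4$ does not work, however: \cite{DKT} only produces bi-Lipschitz charts, i.e.\ it recovers the $C^1$ structure. It says nothing about whether that structure is the standard one on $\S^4$.
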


Theorem \ref{thm:kolasinski2} follows from the more general result Theorem \ref{cor:kolasinski} which also provides the value of $\alpha$. We remark that, if Question \ref{q:lapreif} has an affirmative answer, then the LLC assumption can be removed from Theorem \ref{thm:kolasinski2}.



By adapting an example of Pankka and the second author \cite{PW}, we demonstrate the sharpness of condition $p>m(m+2)$ in Theorem \ref{cor:kolasinski} for bi-Lipschitz (or, even weaker, quasisymmetric) parametrizations.

\begin{theorem}\label{thm:ex}
For each $n\geq 3$ there exists an Ahlfors $n$-regular and LLC topological $n$-sphere $\Sigma_n \subset \mathbb{R}^{n+1}$ such that $\mathcal{E}_{p}^{n+2}(\Sigma_n) < \infty$ for all $p<n(n+2)$, but
$\Sigma_n$ is not quasisymmetrically equivalent to $\mathbb{S}^n$.
\end{theorem}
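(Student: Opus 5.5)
The plan is to adapt the Pankka--Vellis construction \cite{PW} of a non-quasisymmetric LLC Ahlfors-regular sphere, replacing its self-similar building blocks by blocks whose flatness defects decay summably fast enough to control the Menger energy for every $p<n(n+2)$. The obstruction to quasisymmetric parametrization in these examples is topological: a Semmes/Heinonen--Wu type ``wild'' knotting repeated at infinitely many scales near a single point (or a countable set) defeats any quasisymmetry while keeping the set LLC and Ahlfors $n$-regular. The construction lives in $\R^{n+1}$, $n\ge 3$, because the required embedding of a non-simply-connected complement (as in Semmes' examples) needs codimension-one room in dimension at least $4$.

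Concretely, I would build $\Sigma_n$ as a limit of PL spheres $\Sigma^{(k)}$. At stage $k$ a fixed \emph{modification} is inserted into a ball $B_k$ of radius $r_k = 2^{-k}$ centered near a fixed point $x_\infty$. Inside $B_k$ the sphere is replaced by a rescaled copy of a fixed topological gadget $G$, the Semmes-type building block, which has a nontrivial linking structure. The crucial new feature compared to \cite{PW} is that the gadget is \emph{flattened}: it is a graph-like, compressed copy that deviates from the tangent plane only by height $\epsilon_k r_k$, with $\epsilon_k\to 0$ slowly. I would take $\epsilon_k = 1/k$, or a power of $\log$. Since the topological type of the gadget persists at every scale, the non-quasisymmetry argument of \cite{PW} still applies: a quasisymmetry would yield uniformly controlled contractions of small linking cycles, and that control is incompatible with the infinitely repeated knotting. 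The flattening must preserve the LLC constant, because the contraction of loops around the gadget happens inside balls of comparable size regardless of its height. It also must preserve Ahlfors regularity, since the surface area of the flattened gadget is comparable to $r_k^n$.

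Next I would estimate the energy. On $M^{n+2}$, $\mathcal{K}(x_0,\dots,x_{n+1})\lesssim \beta(B)/\diam$, where $B$ is a ball containing the simplex. Away from $x_\infty$ the surface is smooth, so only tuples with diameter $d$ inside $B_k$ and $d\lesssim r_k$ matter. Within $B_k$ at scale $d$ the flatness is at most about $\epsilon_k r_k/d$ for $d\ge \epsilon_k r_k$. Below scale $\epsilon_k r_k$, the gadget is a bounded-complexity PL set at unit scale after rescaling by $\epsilon_k r_k$, so $\mathcal{K}\lesssim d^{-n}$ holds there. Summing dyadically gives a contribution from block $k$ of order
\[
\int_{0}^{r_k} d^{-pn}\,(\min\{1,\epsilon_k r_k/d\})^{p}\, r_k^{n} d^{\,n(n+1)-1}\,dd ,
\]
where the weight $r_k^n d^{n(n+1)}$ is the $\mathcal{H}^{n(n+2)}$ mass of the tuples. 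For $p<n(n+2)$ the exponent $n(n+2)-pn$ is positive, so the small scales converge. The total is at most $C\,r_k^{n(n+2)}\cdot r_k^{-pn}\cdot\mathrm{poly}(\epsilon_k)$, up to scale-invariance bookkeeping. I would choose the geometric placement (several copies, with $r_k$ decaying) so that the sum over $k$ converges, exploiting strict inequality $p<n(n+2)$ for a power gain.

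The main obstacle is this last balance. The energy is scale-invariant exactly at $p=n(n+2)$, so a fixed gadget at each scale contributes an equal nonzero amount and diverges. Strict inequality gives a factor $r_k^{n(n+2)-pn}$ that is geometrically decaying, and this is what makes the sum finite. I would verify carefully that the flattened gadget's energy at its own scale is finite uniformly (in fact, polynomial in $\epsilon_k^{-1}$ is harmless against the geometric decay), so even $\epsilon_k$ constant may suffice. Then one only needs the PL-ness of the gadget: its own energy is finite for $p<n(n+2)$, and a $p$-dependent threshold near corners must be checked, which I expect to follow from the $d^{n(n+2)-pn-1}$ integrability above.
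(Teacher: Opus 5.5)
\paragraph{The decisive gap: non-quasisymmetry.}
Suppose the same bounded-complexity gadget $G$ (a PL or smooth disk agreeing with the plane near $\partial B_k$) is rescaled into well-separated balls $B_k$. Then the resulting sphere is bi-Lipschitz to $\mathbb{S}^n$. Take one bi-Lipschitz parametrization of $G$ by the flat disk, conjugate it by the similarities onto each $B_k$, and extend by the identity. The result is globally bi-Lipschitz with a constant independent of $k$. So ``the topological type persisting at every scale'' is no obstruction at all.

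What the argument of \cite{PV} needs, and what the paper builds, are gadgets of \emph{unbounded} complexity. The $m$-th inserted piece $\zeta_m\circ G_m(B)$ contains $m$ nested levels of the Bing-double/Semmes reembedding. It is the accumulation of these pieces on a Semmes-type space that rules out a quasisymmetric parametrization.

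This changes the energy balance you identified. A depth-$m$ gadget at unit scale contains, for each $i<m$, $2^i$ copies of a fixed non-flat smooth piece at scale $2^{-i}$. By Lemma~\ref{lem:mengerscale} its energy is comparable to $\sum_{i<m}2^{i(1+p-n(n+2))}$, which grows exponentially in $m$ once $p>n(n+2)-1$. For the same reason, a single infinitely nested gadget has infinite energy for $p\ge n(n+2)-1$. Placed at scale $2^{-m}$, the $m$-th gadget contributes about $2^{m(1-2(n(n+2)-p))}$, which is not summable for $p$ close to $n(n+2)$. The paper instead puts the $m$-th gadget at the superexponentially small scale $2^{-m^2-5}$. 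The factor $2^{-(m^2+5)(n(n+2)-p)}$ then beats the exponential growth. Your proposal has neither the increasing depth nor this choice of scales.

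\paragraph{Errors in the energy bookkeeping.}
\begin{enumerate}
\item Homogeneity: $\mathcal{K}$ is homogeneous of degree $-1$, so the correct bound is $\mathcal{K}\lesssim 1/d$, not $d^{-n}$. Likewise, the energy of a $\lambda$-rescaled set scales by $\lambda^{n(n+2)-p}$, not $\lambda^{n(n+2)-pn}$. With your exponent, $n(n+2)-pn>0$ only for $p<n+2$.
\item PL building blocks are not admissible. Near an $(n-1)$-dimensional edge of a PL $n$-manifold, the tuples of diameter $\sim d$ straddling the edge have $\mathcal{H}^{n(n+2)}$-measure $\sim d^{n(n+1)+1}$. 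On a fixed proportion of them, $\mathcal{K}\sim 1/d$. Hence $\mathcal{E}^{n+2}_p=\infty$ for $p\ge n(n+1)+1$, which is strictly below $n(n+2)$ when $n\ge2$ (compare Lemma~\ref{lem:C2} with $\alpha=0$). The paper's pieces are images under diffeomorphisms, so Lemma~\ref{lem:C2} applies to its basic blocks.
\item The flattening is unnecessary, and with $\epsilon_k\to0$ it would break uniform LLC. An unlinking disk cannot be a graph over the plane. Squeezing it vertically therefore produces distinct sheets within distance $\lesssim\epsilon_k r_k$ that cannot be joined inside balls of radius $C\epsilon_k r_k$ with $C$ independent of $k$.
\end{enumerate}
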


It would be interesting to understand what happens at the critical case $p=m(m+2)$. More specifically, if $M$ is a compact LLC $m$-manifold in $\RR^n$ with $\mathcal{E}^{m+2}_{m(m+2)}(M) < \infty$, does $M$ admit local quasisymmetric (or even bi-Lipschitz) parametrizations by $\R^m$?

\subsection{Quasiplanes}

Another application of Theorem \ref{thm:LLCfine} concerns flatness properties of quasiplanes (quasisymmetric images of $\mathbb{R}^n$ in $\mathbb{R}^N$). This topic, and its relation to local bi-Lipschitz parametrizations, is the subject of the papers \cite{BGRT,ABT}. In \cite{ABT}, the following lemma is proven. 

\begin{lemma}[{\cite[Lemma 4.2]{ABT}}]\label{lem:ABT}
Let $1\leq n \leq N-1$, let $V\subset\R^N$ be an $n$-dimensional plane, let $v\in V$, and let $e \in \mathbb{S}^{N-1}$ be a unit vector in $\mathbb{R}^N$. If $f\colon B^N(v,2r) \rightarrow\mathbb{R}^N$ is a topological embedding, then
\begin{equation}\label{eq:abt}
\beta_{f(V\cap B^N(v,2r))}^n\left( B^N(f(v), \tfrac12 |f(v+re)-f(v)|)\right) \leq 72N (H_f(B^N(v,2r))-1).\end{equation}
\end{lemma}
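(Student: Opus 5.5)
The plan is to use the part of the domain ball that lies off the plane $V$: points of $V$ are equidistant from symmetric pairs of points placed in the directions normal to $V$. Throughout, $H=H_f(B^N(v,2r))$ is the linear dilatation. It bounds, for every ball $B^N(y,t)$ contained in $B^N(v,2r)$, the ratio of $\max_{|x-y|=t}|f(x)-f(y)|$ to $\min_{|x-y|=t}|f(x)-f(y)|$. Put $\epsilon = H-1$ and $\rho=|f(v+re)-f(v)|$.

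Since $\beta\le 1$ always, \eqref{eq:abt} is trivial when $72N\epsilon\ge 1$, so assume $\epsilon<1/(72N)$. The first step is to localize. Let $z=f(y)$ with $y\in V\cap B^N(v,2r)$ and $|z-f(v)|\le\rho/2$. Comparing $|f(y)-f(v)|$ with $|f(v+te)-f(v)|$ on spheres about $v$, together with $H<2$ and a monotonicity argument along the ray through $y$, should give $|y-v|<r$. So every relevant $y$ lies in $B^N(v,r)$, and all auxiliary points below stay well inside $B^N(v,2r)$.

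The basic tool is an \emph{equidistance lemma}. Suppose $|x-a|=|x-b|=t$ and $B^N(x,t)\subset B^N(v,2r)$. Then $|f(x)-f(a)|\le H|f(x)-f(b)|$ and vice versa. Squaring and subtracting, $f(x)$ lies within distance of order $\epsilon\,|f(x)-f(a)|^2/|f(a)-f(b)|$ of the perpendicular bisector hyperplane of $f(a),f(b)$.

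Fix an orthonormal basis $\nu_1,\dots,\nu_{N-n}$ of the orthogonal complement of $V-v$, a scale $s\approx r/2$, and the points $a_k^\pm=v\pm s\nu_k$. Every $y\in V$ is equidistant from $a_k^+$ and $a_k^-$. By the localization step, the spheres centered at such $y$ through $a_k^\pm$ fit in $B^N(v,2r)$, perhaps after shrinking $s$. Hence $f(y)$, and also $f(v)$, lies in a slab of width $O(\epsilon\rho)$ around the bisector $L_k$ of $f(a_k^+),f(a_k^-)$. For this I will use that $|f(a_k^\pm)-f(v)|$ and $|f(y)-f(a_k^\pm)|$ are all comparable to $\rho$ with constant $H^{O(1)}\le 2$, obtained by chaining spheres centered at $v$.

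Next I will show that the normals $d_k=f(a_k^+)-f(a_k^-)$ are nearly orthogonal, with lengths comparable to $\rho$. The point $a_j^\pm$ is equidistant from $a_k^+$ and $a_k^-$, and $v$ is the common midpoint. The equidistance lemma then gives $|\langle d_k,\,f(a_j^+)-f(a_j^-)\rangle|\lesssim\epsilon\rho^2$, so the Gram matrix of $d_k/|d_k|$ is within $O(\epsilon)$ of the identity. Now let $P$ be the $n$-plane through $f(v)$ orthogonal to all the $d_k$. Any point within $O(\epsilon\rho)$ of each $L_k$ (which pass within $O(\epsilon\rho)$ of $f(v)$) is within $O(\sqrt{N}\,\epsilon\rho)$, or crudely $O(N\epsilon\rho)$, of $P$. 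The Gram matrix near the identity lets one invert the coordinate system with bounded loss. Dividing by the ball radius $\rho/2$ gives $\beta\le CN\epsilon$. Tracking constants should produce the $72N$.

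The main obstacle is the bookkeeping in the last step. One must make the slab widths explicitly $\le c\,\epsilon\rho$ and control the near-orthogonality of the $d_k$ well enough to pass from ``close to each hyperplane'' to ``close to their intersection'' with only a factor linear in $N$. A secondary issue is ensuring that all spheres used lie in $B^N(v,2r)$, which is why the localization $|y-v|<r$ and the choice of $s$ come first.
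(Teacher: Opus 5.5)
The paper gives no proof of this lemma. It is quoted from \cite[Lemma 4.2]{ABT} and used only as a black box in the proof of Corollary~\ref{lem:ABT2}, so there is no in-paper argument to compare with, and your sketch has to stand on its own. Its core is sound. Points of $V$ are equidistant from $v\pm s\nu_k$, so their images lie in thin slabs about the bisectors $L_k$. The same trick applied to $a_j^\pm$ makes the normals $d_k$ nearly orthogonal. A point lying in all the slabs is then close to the $n$-plane $P$ through $f(v)$ orthogonal to the $d_k$. Done carefully, this proves the lemma, even with $O(\sqrt N)$ in place of $72N$. However, one step is not justified as written, and some estimates need adjusting.

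The problematic step comes from your description of $H$. By the paper's definition, $H_f(B^N(v,2r))$ is the weak quasisymmetry constant on $B^N(v,2r)$. So $|f(x)-f(a)|\le H|f(x)-f(b)|$ holds for \emph{every} triple $x,a,b\in B^N(v,2r)$ with $|x-a|\le|x-b|$, with no requirement that $B^N(x,|x-a|)\subset B^N(v,2r)$. Under your restricted ``spheres inside the ball'' reading, the claim that the spheres about $y$ through $a_k^\pm$ fit in $B^N(v,2r)$ ``perhaps after shrinking $s$'' does not follow. Your localization gives only $|y-v|<r$, and $B^N(y,\sqrt{|y-v|^2+s^2})\subset B^N(v,2r)$ forces $s^2\le 4r(r-|y-v|)$, so no fixed $s>0$ works for all relevant $y$. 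A $y$-dependent $s$ destroys the common hyperplanes $L_k$. A small $s$ destroys the lower bound $|d_k|\gtrsim\rho$ that keeps the slabs thin. With the triple form the issue disappears, and so does the need for localization, since $|f(y)-f(a_k^\pm)|\le|f(y)-f(v)|+|f(v)-f(a_k^\pm)|$ is just the triangle inequality. (If you want $|y-v|<r$ anyway, it is one line: $|y-v|\ge r$ would give $\rho\le H|f(y)-f(v)|\le H\rho/2$.)

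For the constants, take $s=r$ rather than $s\approx r/2$. Then $|a_k^\pm-v|=r=|(v+re)-v|$ gives $\rho/H\le|f(a_k^\pm)-f(v)|\le H\rho$ directly, and $|a_k^+-v|\le|a_k^+-a_k^-|$ gives $|d_k|\ge\rho/H^2$. With $s=r/2$, ``chaining spheres centered at $v$'' cannot bound $|f(a_k^\pm)-f(v)|$ from below by a multiple of $\rho$. Weak quasisymmetry never bounds a shorter image distance from below by a longer one. You would need a triangle inequality through $v+\tfrac r2 e$, which costs a factor $1+H$, not $H^{O(1)}$.

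Now use the bound $(H^2-1)\min\{|f(x)-f(a)|,|f(x)-f(b)|\}^2/(2|f(a)-f(b)|)$ for the distance from $f(x)$ to the bisector of $f(a),f(b)$, together with $\epsilon<1/(72N)\le 1/144$. This puts $f(v)$ within $1.04\,\epsilon\rho$ and $f(y)$ within $2.32\,\epsilon\rho$ of each $L_k$. It also gives $|\langle\hat d_j,\hat d_k\rangle|\le 9\epsilon$ for $j\ne k$, where $\hat d_k=d_k/|d_k|$.

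Note that entrywise $O(\epsilon)$ closeness of the Gram matrix $G$ to the identity only yields $\|G-I\|\le 9N\epsilon$. It is exactly your reduction $\epsilon<1/(72N)$ that makes this $<1/8$. In particular the $d_k$ are independent, so $P$ really is an $n$-plane. With $b_k=\langle f(y)-f(v),\hat d_k\rangle$, one gets $\dist(f(y),P)^2=b^{T}G^{-1}b\le\tfrac87(N-n)(3.4\,\epsilon\rho)^2$. Hence $\beta\le 8\sqrt N\,\epsilon\le 72N\epsilon$.
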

Here
$$ H_f(X) = \inf\{ H : f \text{ is weakly } H\text{-quasisymmetric on }X\}.$$
See Section \ref{sec:prelim} for the definition of weakly quasisymmetric maps. In other words, this lemma says that if a quasisymmetry is ``close to a similarity'', then its image has small $\beta$-numbers.

In the special case $n=N-1$, the authors of \cite{BGRT}, adapting earlier work of Prause \cite{Prause}, obtain a similar but stronger result with the important difference that $\theta$ rather than $\beta$ appears on the left hand side of \eqref{eq:abt}. (See \cite[Lemma 2.3]{BGRT} or \cite[Lemma 4.1]{ABT}.) In the case of higher co-dimension, the authors of \cite{ABT} remark that they were unable to generalize the two-sided flatness bound from co-dimension $1$ to higher co-dimension; see the discussion in \cite[p. 199]{ABT}.

On the other hand, Theorem \ref{thm:LLCfine} allows for an immediate upgrading of Lemma \ref{lem:ABT} to a bilateral estimate in arbitrary co-dimension.

\begin{corollary}\label{lem:ABT2}
Let $1\leq n \leq N-1$, let $V\subset \R^N$ be an $n$-dimensional plane, let $v\in V$, and let $e\in \mathbb{S}^{N-1}$ be a unit vector in $\mathbb{R}^N$. If $f\colon B^N(v,2r) \rightarrow\mathbb{R}^N$ is a topological embedding, then
$$ \theta_{f(V\cap B^N(v,2r))}^n\left (B^N(f(v), \tfrac14 |f(v+re)-f(v)|)\right) \lesssim_N H_f(B^N(v,2r))-1.$$
\end{corollary}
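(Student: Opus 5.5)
The plan is to combine Lemma \ref{lem:ABT} with Theorem \ref{thm:LLCfine}; the needed LLC constants come from quasisymmetry itself. Write $H = H_f(B^N(v,2r))$, $E = f(V\cap B^N(v,2r))$, $\rho = \tfrac12|f(v+re)-f(v)|$.

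\emph{Reduction to $H$ close to $1$.} Since $\theta\le 2$ always (indeed $\theta^n_E(B)\le 2$ for any ball centred on $E$), we may assume $H-1\le \epsilon_0$ for a small $\epsilon_0=\epsilon_0(N)$. Otherwise the inequality holds with implied constant $2/\epsilon_0$.

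\emph{Local LLC and manifold structure.} The set $E$ is an $n$-manifold with boundary, namely the image of a disk. It is not closed, and it is LLC only away from its boundary. So I will apply the argument behind Theorem \ref{thm:LLCfine} (or its sharper form, Theorem \ref{thm:LLCfine2}) locally, in the ball $B^N(f(v),\rho)$. That result depends only on the geometry of $M$ inside $B^n(x,Cr)$, so it suffices to check three things:
\begin{enumerate}[(i)]
\item $E\cap B^N(f(v),C'\rho)$ is a manifold without boundary there, i.e. $f(\partial(V\cap B(v,2r)))$ stays outside $B(f(v),C'\rho)$;
\item for $y\in E\cap B(f(v),\rho)$ and $s\le \rho$, the set $B(y,s)\cap E$ contracts inside $B(y,Cs)\cap E$;
\item the ball is small relative to the relevant scale $R$.
\end{enumerate}

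Each follows from weak $H$-quasisymmetry with $H\le 2$. For (i), points of $V$ at distance at least $2r-o(r)$ from $v$ map to distance $\gtrsim \rho$ from $f(v)$, with a definite constant. This uses quasisymmetry comparisons together with the fact that $v+re$ (or its projection into $V$) lies at distance $r$. For (ii), $f^{-1}(B(y,s))\cap V$ is contained in a ball $B_V(f^{-1}(y),t)$ whose image lies in $B(y,Cs)$, where $C$ depends only on the quasisymmetry control. The ball $B_V(f^{-1}(y),t)$ is convex, so the inclusion contracts to a point. Hence $E$ is LLC with $C=C(N)$ at the scales $s\lesssim\rho$ and at points near $f(v)$.

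\emph{Applying Theorem \ref{thm:LLCfine} locally.} By scale invariance I normalise $\rho=1$. Theorem \ref{thm:LLCfine}, applied with radius $\tfrac12\rho$ (hence the factor $\tfrac14$ in the statement), then gives
\[
\theta^n_E\bigl(B(f(v),\tfrac12\rho)\bigr) \le K\,\beta^n_E\bigl(B(f(v),\rho)\bigr).
\]
Here the constants $K,R'$ depend only on $C$ and $N$, and after normalisation $R'$ is compared against $1$. If $R'<1$, I cover by the sharpened version with radius ratio $(1+\eta)$, or simply rescale, since the LLC constant holds at all scales below $\rho$. Lemma \ref{lem:ABT} bounds the right side by $72NK(H-1)$.

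\emph{Main obstacle.} The hardest step is justifying that Theorem \ref{thm:LLCfine}, which is stated for closed manifolds, applies to this local, bounded piece. I would inspect its proof, which presumably uses a degree or topological argument in $B(x,Cr)$: if $E$ missed a point of the approximating plane, the projection would give a nontrivial obstruction to contractibility. I would check that it only uses that $E\cap B(x,C r)$ is a boundaryless manifold that is LLC at the scales involved. Failing that, I would run that degree argument directly for $E$ near $f(v)$.
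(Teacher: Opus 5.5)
\textbf{The gap.} The missing step is the one you flag yourself as the main obstacle, and the proposal leaves it open. Theorem \ref{thm:LLCfine} is stated for closed LLC manifolds, but $E=f(V\cap B^N(v,2r))$ is a manifold with boundary. So the sentence ``that result depends only on the geometry of $M$ inside $B^n(x,Cr)$'' is an unproved claim about the proof, not a consequence of the theorem.

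Nor is the proof a degree argument that you can simply re-run near $f(v)$. The Brouwer step in the proof of Theorem \ref{thm:LLCfine2} is the easy, local part. Borsuk's retraction (Lemma \ref{lem:retraction}) even applies to $E$ as it stands, since $E$ is closed and LLC. The non-local input is \eqref{eq:PnearM}, which comes from Theorem \ref{thm:LLCflat2}. That is a compactness argument: its blow-up limits must be LLC homology manifolds near the unit ball (via \cite[Proposition 2.19]{GCD}), and it then uses invariance of domain for a \emph{closed} homology manifold (Lemma \ref{lem:homologyball}). Your plan needs local versions of these results for sets that are boundaryless manifolds only near the ball, or some other argument for \eqref{eq:PnearM}. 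None is supplied.

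\textbf{How the paper avoids this.} Your reduction to $H$ near $1$ and your items (i) and (ii) are essentially the paper's preliminary steps. The missing idea is to make the set global rather than the theorem local.

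\begin{enumerate}
\item For $H<2$, weak quasisymmetry gives $|f(x)-f(v)|\ge |f(v+re)-f(v)|/H>\rho$ whenever $|x-v|\ge r$. Hence the image of the boundary sphere misses $B(f(v),\rho)$.
\item Lemma \ref{lem:weaktoQS}, together with quasisymmetric invariance of LLC, makes the image of the ball and the image of its boundary sphere LLC with constants depending only on $N$.
\item Pass to $\R^{N+1}$ and take the boundary of the cylinder $(\text{image})\times[0,3]$. This consists of two copies of the image, at heights $0$ and $3$, joined by the side wall $(\text{image of the boundary sphere})\times[0,3]$.
\item This boundary is a closed topological sphere. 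It is LLC with constant depending only on $N$: project vertically onto a face or onto a horizontal slice of the wall, then use the LLC property there.
\item It coincides with $E$ inside $B^{N+1}(f(v),\rho)$. So Theorem \ref{thm:LLCfine} applies verbatim at radius $\rho/2$, and Lemma \ref{lem:ABT} finishes.
\end{enumerate}

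The paper writes this construction with $f(B^N(\mathbf{0},2))$ and $f(\partial B^N(\mathbf{0},2))$, which produces an $N$-sphere. For the $n$-dimensional conclusion it must be run with $E$ and $f(V\cap \partial B^N(v,2r))$, producing an $n$-sphere.

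\textbf{Your item (iii).} Your proposed remedy does not work. Rescaling preserves the ratio of $\rho$ to the LLC scale. The $(1+\eta)$ variant changes the ratio of the two balls, not the admissible radius. What is needed is that Theorem \ref{thm:LLCfine2} holds for all radii below a fixed fraction of the LLC scale. This follows from its proof: the radius $s$ in Lemma \ref{lem:retraction} can be taken proportional to $R$, so $\epsilon$ can be shrunk accordingly.
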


We prove Corollary \ref{lem:ABT2} in Section \ref{sec:quasiplanes}. It would be interesting to have analogous result in the weaker case where $f$ is defined not on all of $B^{N}(v,2r)$ but just on $V\cap B^{N}(v,2r)$.

\subsection{Outline of the paper}
After giving some preliminaries in Section \ref{sec:prelim}, we prove Theorem \ref{thm:LLCfine} in Section \ref{sec:LLCfine}. In Section \ref{sec:quasiplanes} we use Theorem \ref{thm:LLCfine} to prove Corollary \ref{lem:ABT2}. The parametrization Theorem \ref{thm:kolasinski2} is proven in Section \ref{sec:mengerparam} and the accompanying example Theorem \ref{thm:ex} is proven in Section \ref{sec:ex}. Finally, the proof of Theorem \ref{prop:LAPfor1mfds2}, and some related results, occupy Section \ref{sec:1dim}.

\subsection*{Acknowledgments} The ideas in the proof of Theorem \ref{prop:LAPfor1mfds2} and Proposition \ref{prop:LAPimpliesBT} were developed in discussions with Matthew Badger. The second author also thanks Ball State University for its hospitality.

\section{Notation and preliminaries}\label{sec:prelim}
We write $B(x,r)$ for a closed ball of radius $r$ centered at $x$. Given a set $A\subset \R^n$ and a number $\e>0$, we denote by
\[ N_{\epsilon}(A) = \{y\in \R^n : \dist(y,A) \leq \e\}\]
the closed $\epsilon$-neighborhood of $A$.

An embedding $f: (X,d) \to (Y,d')$ between metric spaces is said to be \emph{quasisymmetric} if there exists a homeomorphism $\eta: [0,+\infty) \to [0,+\infty)$ such that for all $x,a,b \in X$ with $x\neq b$, 
\[ \frac{d'(f(x),f(a))}{d'(f(x),f(b))} \leq \eta\left( \frac{d(x,a)}{d(x,b)}\right). \] 

Quasisymmetric mappings preserve many geometric properties but need not be smooth. Since their introduction by Beurling and Ahlfors \cite{BerAhl}, quasisymmetric maps have played an important role in the development of geometric function theory as they are considered the right generalization of conformality to a general metric space setting. 

It is well-known that on certain metric spaces, quasisymmetry is equivalent to a weaker condition, aptly named \emph{weak quasisymmetry}. We say that a homeomorphism $f: (X,d) \to (Y,d')$ between metric spaces is said to be weakly $H$-\emph{quasisymmetric}, for some $H\geq 1$, if for any triple $x,a,b\in X$ 
\[ d(x,a)\leq d(x,b) \quad\text{implies}\quad d'(f(x),f(a))\leq H d'(f(x),f(b)).\]

It almost immediately follows from the definition that if $f$ is quasisymmetric with control $\eta$, then it is weakly $\eta(1)$-quasisymmetric. As the next lemma states, the converse is also true for connected doubling metric spaces, including all Euclidean spaces and balls therein.

\begin{lemma}[{\cite[Theorem 10.19]{Heinonen}}]\label{lem:weaktoQS}
Let $X$ be a connected doubling metric space and let $Y$ be a doubling metric space. A weakly $H$-quasisymmetric embedding $f:X \to Y$ is quasisymmetric, quantitatively.
\end{lemma}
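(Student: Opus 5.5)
The plan is to build the control function $\eta$ in three pieces: a bound $\eta(t)\le H$ for $t\le 1$, a power-type bound for $t\ge 1$, and a decay $\eta(t)\to 0$ as $t\to 0$. At the end I will replace these by a single increasing homeomorphism of $[0,\infty)$ dominating them. Throughout, fix $x,a,b\in X$ with $x\ne b$ and write $t=d(x,a)/d(x,b)$. Since $f$ is an embedding, $f(x)\ne f(b)$. The case $t\le 1$ is exactly the weak quasisymmetry hypothesis, which gives $d'(f(x),f(a))\le H\,d'(f(x),f(b))$.

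\emph{Large ratios.} Suppose $t>1$. Connectedness of $X$ implies that the function $z\mapsto d(x,z)$ takes every value in $[0,d(x,a)]$. So for $j=0,1,\dots,k$ with $k=\lceil\log_2 t\rceil$, I can choose points $y_j$ with $d(x,y_j)=\min\{2^j d(x,b),\,d(x,a)\}$, taking $y_0=b$ and $y_k=a$. Then $d(y_j,y_{j+1})\le d(y_j,x)+d(x,y_{j+1})\le 3d(x,y_j)$. I would like to apply weak quasisymmetry at the base point $y_j$, but the hypothesis needs the ratio to be at most $1$, not $3$. To fix this, I will insert intermediate points at a geometric ratio $<2$ (again by connectedness), so that $d(y_j,y_{j+1})\le d(y_j,x)$. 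Weak quasisymmetry then gives $d'(f(y_j),f(y_{j+1}))\le H\,d'(f(y_j),f(x))$, and the triangle inequality yields
\[ d'(f(x),f(y_{j+1}))\le (1+H)\,d'(f(x),f(y_j)). \]
Iterating along the chain gives $d'(f(x),f(a))\le (1+H)^{C(1+\log t)}\,d'(f(x),f(b))$, which is a power bound $\eta(t)\lesssim t^{\alpha}$ with $\alpha$ depending only on $H$.

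\emph{Small ratios.} This is the main step, and where doubling of $Y$ is used. Suppose $t\le 2^{-k}$ and, for contradiction, $d'(f(x),f(a))\ge s\,d'(f(x),f(b))$ for some fixed $s>0$. Using connectedness again, I pick points $x_0=a,x_1,\dots,x_k$ with $d(x,x_i)=2^i d(x,a)\le d(x,b)$. Weak quasisymmetry at base point $x$ gives
\[ s\,d'(f(x),f(b))\le d'(f(x),f(a))\le H\,d'(f(x),f(x_i))\le H^2\, d'(f(x),f(b)). \]
For $i<j$ we have $d(x_j,x)\le 2d(x_j,x_i)$. Splitting once more through a midpoint-type argument, as in the large-ratio step, I get $d'(f(x_j),f(x))\le C(H)\,d'(f(x_j),f(x_i))$. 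Hence the points $f(x_1),\dots,f(x_k)$ all lie in the ball $B(f(x),H\,d'(f(x),f(b)))$ and are pairwise at distance $\gtrsim_H s\,d'(f(x),f(b))$. Doubling of $Y$ bounds the number of such points by $N(s,H,C_Y)$, so $k\le N$. Contrapositively, $d'(f(x),f(a))/d'(f(x),f(b))\le s$ whenever $t<2^{-N(s)}$, and this yields $\eta(t)\to 0$ as $t\to 0$.

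Finally I set $\eta$ to be an increasing homeomorphism lying above the three bounds, for example a piecewise-linear majorant with $\eta(0)=0$. The constants depend only on $H$ and the doubling constants, so the conclusion is quantitative. The step I expect to be delicate is the small-ratio one, specifically extracting the separation of the image points purely from weak quasisymmetry; the chain-subdivision trick is what makes it work. The doubling hypothesis on $X$ appears not to be needed beyond what connectedness already supplies.
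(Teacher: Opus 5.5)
\textbf{The gap is in your large-ratio step, and connectedness alone cannot fill it.}

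Connectedness lets you prescribe $d(x,y_{j+1})$. It gives no control over where $y_{j+1}$ sits relative to $y_j$. The triangle inequality only yields $d(y_j,y_{j+1})\le(1+\lambda)\,d(x,y_j)$ with $\lambda=d(x,y_{j+1})/d(x,y_j)>1$, which never forces $d(y_j,y_{j+1})\le d(y_j,x)$. A suitable point may not exist at all: in $X=[-10,1]\subset\mathbb{R}$ with $x=0$ and $y_j=1$, every point within distance $1$ of $y_j$ also lies within distance $1$ of $x$.

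Your closing claim that doubling of $X$ is unnecessary is false. Let $\omega(u)=1/\log(1/u)$, which is increasing and concave near $0$. For small $\varepsilon_0$, give $X=[0,\varepsilon_0]$ the metric $\omega(|x-y|)$, let $Y=[0,\varepsilon_0]$ be Euclidean, and let $f$ be the identity.
\begin{itemize}
\item Since $\omega$ is increasing, $d(x,a)\le d(x,b)$ if and only if $|x-a|\le|x-b|$, so $f$ is weakly $1$-quasisymmetric.
\item $X$ is connected and $Y$ is doubling.
\item For $x=0$, $a=u$, $b=u^2$ we get $d(x,a)/d(x,b)=2$, while $|x-a|/|x-b|=1/u\to\infty$.
\end{itemize}
So no finite $\eta(2)$ exists. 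The large-ratio bound genuinely needs doubling of $X$; this $X$ is not doubling.

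\textbf{The missing argument.} Suppose $d(x,b)=r$, $d(x,a)\le 2r$ and $d'(f(x),f(a))=L\,d'(f(x),f(b))$. Set $\mu=H+2$. Since $f(X)$ is connected, pick points $z_i$ with $d'(f(x),f(z_i))=\mu^i\,d'(f(x),f(b))$ for each $i\ge 1$ with $\mu^i<L/H$.
\begin{itemize}
\item Contrapositives of weak quasisymmetry at the base point $x$ give $r<d(x,z_i)<d(x,a)\le 2r$.
\item For $i<j$ we have $d'(f(z_i),f(z_j))\ge(\mu-1)\,d'(f(x),f(z_i))>H\,d'(f(x),f(z_i))$. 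The contrapositive at the base point $z_i$ then gives $d(z_i,z_j)>d(z_i,x)>r$.
\end{itemize}
So the $z_i$ form an $r$-separated set in $B(x,2r)$. Doubling of $X$ bounds their number, which bounds $L$ by a constant $C(H,C_X)$. Only then does a chain work. Keep the base point $x$ throughout and take points at distances $2^jr$ from $x$; this gives $\eta(t)\le C^{1+\log_2 t}$.

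\textbf{The small-ratio step.} This is the standard argument and uses doubling of $Y$ correctly. However, its separation claim relies on the same faulty trick (base point $x_j$, ratio $2$). Use the base point $x_i$ instead. For $i<j$, $d(x_i,x_j)\ge d(x,x_j)-d(x,x_i)\ge d(x,x_i)$. Hence $d'(f(x_i),f(x_j))\ge H^{-1}d'(f(x_i),f(x))\ge H^{-2}s\,d'(f(x),f(b))$ directly, with no chain needed.
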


\begin{remark}\label{rem:similarities}
Recall that a mapping $f:(X,d)\to (Y,d')$ is a \emph{similarity} if there exists $\lambda>0$ (called the \emph{scaling factor}) such that $d'(f(x),f(y)) = \lambda d(x,y)$ for all $x,y \in X$. The following statements are easy and are left to the reader.
\begin{enumerate}
\item If $f:(X,d)\to (Y,d')$ is a similarity, then it is weakly 1-quasisymmetric.
\item If $f:\R^n\to\R^n$ is weakly $1$-quasisymmetric, then it is a similarity.
\item If $f:(X,d)\to (Y,d')$ is weakly $H$-quasisymmetric and if $\zeta_1 : (X,d)\to (X,d)$ and $\zeta_2 : (Y,d') \to (Y,d')$ are similarities, then 
\[\zeta_2 \circ f\circ\zeta_1 : (X,d) \to (Y,d')\] 
is weakly $H$-quasisymmetric.
\end{enumerate}
\end{remark}

See \cite{Heinonen} for a detailed exposition on the theory of quasisymmetric maps.

We will also need a notion of pointed Hausdorff convergence for (possibly unbounded) pointed subsets of Euclidean space. The following definition from \cite[Chapter 8]{DS97} is convenient:

\begin{definition}\label{def:ptdhausdorff}
Let $\{F_j\}$ be a sequence of non-empty closed subsets of $\mathbb{R}^n$, and $F$ another non-empty closed subset of $\mathbb{R}^n$. We say that $\{F_j\}$ \emph{converges} to $F$ if the following hold for all $R>0$:
$$\lim_{j\rightarrow \infty} \sup_{x\in F_j \cap B(0,R)} \dist(x,F) = 0$$
and
$$\lim_{j\rightarrow \infty} \sup_{x\in F \cap B(0,R)} \dist(x,F_j) = 0.$$
\end{definition}

We will rely on the following basic compactness lemma for this notion of convergence:
\begin{lemma}[Lemma 8.2 of \cite{DS97}]
Let $\{F_j\}$ be a sequence of non-empty closed subsets of $\mathbb{R}^n$ that each intersect a fixed ball $B=B(0,r)\subset\mathbb{R}^n$. Then there is a subsequence of $\{F_j\}$ that converges to a non-empty closed subset $F$ of $\mathbb{R}^n$.
\end{lemma}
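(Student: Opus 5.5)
The approach is to encode each closed set by its distance function and then apply Arzel\`a--Ascoli. For each $j$ set $d_j(y)=\dist(y,F_j)$. Each $d_j$ is $1$-Lipschitz on $\mathbb{R}^n$. Because $F_j$ meets $B(0,r)$, we have $d_j(0)\leq r$, so on every ball $B(0,R)$ the functions $d_j$ are uniformly bounded by $r+R$. Arzel\`a--Ascoli, combined with a diagonal argument over $R=1,2,3,\dots$, gives a subsequence (not relabeled) and a $1$-Lipschitz function $d\geq 0$ such that $d_j\to d$ uniformly on compact subsets of $\mathbb{R}^n$. We then define $F=\{y: d(y)=0\}$. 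This set is closed because $d$ is continuous.

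The next step is to show that $F$ is non-empty and that $d=\dist(\cdot,F)$.

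\emph{Non-emptiness.} Choose $x_j\in F_j\cap B(0,r)$ and pass to a further subsequence with $x_j\to x$. Then $d_j(x)\leq |x-x_j|\to 0$, so $d(x)=0$ and $x\in F$.

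\emph{Lower bound $d\leq \dist(\cdot,F)$.} Since $d$ vanishes on $F$ and is $1$-Lipschitz, $d(y)\leq |y-z|$ for every $z\in F$.

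\emph{Upper bound $\dist(\cdot,F)\leq d$.} Fix $y$ and pick $z_j\in F_j$ with $|y-z_j|=d_j(y)$; such $z_j$ exist because $F_j$ is closed. These points are bounded, so a subsequence converges to some $z$. Then $d_j(z)\leq |z-z_j|\to 0$, which gives $d(z)=0$, i.e.\ $z\in F$. Hence $\dist(y,F)\leq |y-z|=\lim d_j(y)=d(y)$.

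This identification is the step I expect to be the main point of care. In particular, the subsequence extracted for $z_j$ depends on $y$; this is harmless because the limit $d$ is already fixed, so the extraction is only used to evaluate a limit.

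Finally we verify the two conditions of Definition~\ref{def:ptdhausdorff}. Fix $R>0$ and let $\epsilon_j=\sup_{B(0,R)}|d_j-d|$, which tends to $0$ by the locally uniform convergence.
\begin{enumerate}
\item For $x\in F\cap B(0,R)$ we have $\dist(x,F_j)=d_j(x)\leq d(x)+\epsilon_j=\epsilon_j$.
\item For $x\in F_j\cap B(0,R)$ we have $\dist(x,F)=d(x)\leq d_j(x)+\epsilon_j=\epsilon_j$.
\end{enumerate}
Both suprema are therefore at most $\epsilon_j\to 0$. This proves that the subsequence converges to the non-empty closed set $F$.
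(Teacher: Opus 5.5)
The paper does not prove this lemma; it quotes it from \cite{DS97}, so there is no in-paper argument to compare yours against.

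Your proof is correct and complete.
\begin{enumerate}
\item The Arzel\`a--Ascoli/diagonal extraction is justified by the $1$-Lipschitz property together with $d_j(0)\le r$.
\item The auxiliary sub-subsequences (for $x_j$ and for $z_j$) are used only to evaluate values of the already fixed limit $d$, so they do not disturb the final subsequence.
\item Both conditions of Definition~\ref{def:ptdhausdorff} then follow with the single error term $\epsilon_j=\sup_{B(0,R)}|d_j-d|$.
\end{enumerate}

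For condition (2) you only need the inequality $\dist(\cdot,F)\le d$. The reverse inequality is not required for the lemma. It does, however, buy you the extra information that $\dist(\cdot,F_j)\to\dist(\cdot,F)$ locally uniformly along the subsequence.

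Compare this with the other common route: apply Blaschke selection to the compact truncations $F_j\cap B(0,k)$ and then diagonalize. That route requires reconciling the limits of the truncated sets, which need not equal $F\cap B(0,k)$, since points of $F_j$ lying just outside $B(0,k)$ can converge to points of $\partial B(0,k)$. Your distance-function approach avoids this bookkeeping entirely.
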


Recall that given a metric space $X$ and some $s>0$, the \emph{(outer) Hausdorff $s$-measure} is defined by
\begin{align*} \mathcal{H}^s(A) =& \\
\lim_{\delta\to0} &\inf\left\{ \sum_{i=1}^{\infty}(\diam{B_i})^s : B_i\text{ are balls with $\diam{B_i} \leq \d$ and $A\subset \bigcup_{i=1}^{\infty}B_i$} \right\}.
\end{align*}
When restricted on the Borel $\sigma$-algebra of $X$, $\mathcal{H}^s$ is an actual measure. Moreover, the Hausdorff measure has the following scaling property: if $X\subset\R^n$ is a Borel set, $s>0$, and $\phi:\R^n\to\R^n$ is a similarity with scaling factor $\lambda$, then
\begin{equation}\label{eq:Hausscale}
\mathcal{H}^s(\phi(X)) = \lambda^s\mathcal{H}^s(X).
\end{equation}

Finally, we say that a metric space $X$ is \emph{(Ahlfors) $s$-regular} for some $s> 0$ if there exists a Borel measure $\mu$ on $X$ and a constant $C\geq 1$ such that
\begin{equation}\label{eq:reg}
C^{-1}r^s \leq \mu(B(x,r)) \leq C r^s, \quad\text{for all $x\in X$ and $r\in (0,\diam{X})$}.   
\end{equation} 
It is not hard to see that if a metric space $X$ is $s$-regular, then it satisfies \eqref{eq:reg} with $\mu$ replaced by $\mathcal{H}^s$ and $C$ replaced by some (potentially different) constant $C'\geq 1$; see \cite[\textsection1.4.3]{MT10}.

\section{LLC manifolds}\label{sec:LLCfine}

In this section we prove the following stronger version of Theorem \ref{thm:LLCfine}.

\begin{theorem}\label{thm:LLCfine2}
Given $n\in\N$ and constants $C,R>0$, there exist $R'= R'(C,R,n)$ and a threshold $\delta=\delta(C,R,n)$ with the following property. If $M$ is a closed $(C,R)$-LLC $m$-manifold in $\RR^n$, with $\beta^m_M(B^n(x,2r))\leq \delta$ for all $x\in M$ and $r\in (0,R')$, then
\begin{equation}\label{eq:fine} 
\theta_M^m(B^n(x,r)) \leq 4\beta_M^m(B^n(x,2r)).
\end{equation}
\end{theorem}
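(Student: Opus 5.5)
Fix $x\in M$ and $0<r<R'$, and write $\beta=\beta^m_M(B^n(x,2r))\le\delta$. Choose an $m$-plane $P$ through $x$ with $M\cap B(x,2r)\subset N_{2\beta r}(x+P)$; the infimum in the definition of $\beta$ is attained by compactness of $\Gr(m,n)$. For the ball $B(x,r)$ I will use this same plane $x+P$. One half of the Hausdorff distance is then free: every $y\in M\cap B(x,r)$ satisfies $\dist(y,x+P)\le 2\beta r$, but we also need a point of $(x+P)\cap B(x,r)$ nearby. The projection of $y$ onto $x+P$ lies within $2\beta r$ of $y$ and within $r$ of $x$, so this half is immediate. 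The whole proof therefore reduces to the other half: every $z\in(x+P)\cap B(x,r)$ must lie within about $4\beta r$ of $M\cap B(x,r)$, up to the factor-2 bookkeeping between $\theta$ and $\tilde\theta$. So the goal is to show that the orthogonal projection $\pi:M\cap B(x,2r)\to x+P$ covers the disk $(x+P)\cap B(x,(1-c\beta)r)$. Once that holds, each such $z$ has a preimage $y$ with $|y-z|\le 2\beta r$ and $|y-x|\le r$ after a small radial adjustment, and the remaining annulus of width $c\beta r$ costs only another $O(\beta r)$.

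The covering claim is topological, and it is where the LLC hypothesis and the manifold structure enter. I would argue by contradiction. Suppose some $z_0\in(x+P)$ with $|z_0-x|<(1-c\beta)r$ is missed by $\pi(M\cap B(x,2r))$. Take a small ball $U\ni x$ in $M$ homeomorphic to $\R^m$. Using LLC repeatedly at dyadic scales starting near $x$, I want to produce a compact piece $\Sigma\subset M\cap B(x,2r)$ that carries a nontrivial relative $m$-cycle, namely the fundamental class of $M$ localized at $x$, i.e. a generator of $H_m(M,M\setminus\{x\})\cong\mathbb{Z}$ or $\mathbb{Z}/2$. Its boundary should lie in $M\setminus B(x,r')$ for some $r'$ comparable to $r$. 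Projecting, $\pi$ sends this pair into $(x+P,\,(x+P)\setminus B(x,r'-2\beta r))$. The degree of $\pi$ around $x$ is then well defined. It must be nonzero because near $x$ the set $M$ is a $2\beta$-flat manifold lying close to $P$, so locally $\pi$ is a degree $\pm1$ map. But a missed point $z_0$ inside the disk forces the degree about $z_0$, which equals the degree about $x$ since both points lie in the same component of the complement of the projected boundary, to be zero.

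The main obstacle is making "localized fundamental class with boundary far from $x$" rigorous and quantitative with only LLC and no a priori local flatness at small scales. Manifolds can be wild, so one cannot take $U$ to be a nice disk of radius $\sim r$. My plan is to use LLC in the form of a contraction of $M\cap B(x,r/C)$ inside $M\cap B(x,r)$. The inclusion $H_m(M,M\setminus B(x,r))\to H_m(M,M\setminus\{x\})$ needs to be shown nonzero, and for this I would combine the LLC contraction with Alexander-type duality in $M$, using $H_m(M,M\setminus\{x\})\ne0$ at all points of a manifold. Here $R'$ should be small relative to $R$ and to the scale at which $M$ is a manifold near $x$; compactness of the LLC scales gives uniformity.

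If this direct homological route becomes unwieldy, the fallback is a compactness/blow-up argument using the pointed Hausdorff convergence lemma. Rescale counterexamples $M_j$ at $(x_j,r_j)$ so that $r_j=1$ and $\beta_j\to0$. The limit is contained in the plane $P$, and LLC passes to the limit as uniform local contractibility. Then a limiting argument shows that the limit contains the whole unit disk of $P$, since a proper closed subset of $\R^m$ that is an LLC limit of $m$-manifolds cannot omit an open set near the center. This yields smallness of $\theta$ but not the constant 4. To get the constant 4 I would return to the degree argument above, run at scale $r$ with the flatness already established.
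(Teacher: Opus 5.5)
Your reduction is the same as the paper's. The easy half of the Hausdorff distance, and the final step (replace $z$ by $z'\in(x+P)\cap B(x,(1-2\beta)r)$, lift $z'$ to $y\in M\cap B(x,2r)$ with $|y-z'|\le 2\beta r$, and use $(y-z')\perp(z'-x)$ to get $|y-x|<r$ and $|y-z|\le 4\beta r$), are exactly what the paper does. So everything rests on the covering claim, and that is where the gap is: you never justify that the projected class has nonzero degree. The sentence ``near $x$ the set $M$ is a $2\beta$-flat manifold lying close to $P$, so locally $\pi$ is a degree $\pm1$ map'' assumes what has to be proved. Unilateral flatness does not control the degree of the projection: the cusp example $F$ from the introduction has small $\beta$ at the origin, yet it folds over the best plane and the projection has degree $0$. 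Moreover, nothing in that step uses LLC. If smallness of $\beta$ at all points and scales forced local degree $\pm1$ on manifolds, your argument would settle Question~\ref{q:weaklapreif}, which the paper leaves open for $m\ge 2$. Conversely, the step you flag as the main obstacle is not one. For any $m$-manifold and compact $K=M\cap B(x,r')$ there is, with $\mathbb{Z}/2$ coefficients, a class in $H_m(M,M\setminus K)$ restricting to the generator at every point of $K$, and excision puts it inside $M\cap B(x,2r)$. This needs no LLC and no ``manifold scale''; such a dependence would in any case break $R'=R'(C,R,n)$, since $M$ need not be compact.

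What is missing is a way for LLC to produce a continuous map from the plane into $M$ that is close to the identity. The paper does this in two steps. First, Theorem~\ref{thm:LLCflat2} shows by compactness that small $\beta$ forces $(x+P)\cap B(x,2r)\subset N_{2\epsilon r}(M\cap B(x,2r))$. Your fallback points this way, but its key assertion (the limit cannot omit an open set) needs two facts: that a limit of uniformly LLC $m$-manifolds is an LLC homology $m$-manifold (\cite[Proposition 2.19]{GCD}), and invariance of domain for such spaces (Lemma~\ref{lem:homologyball}). Second, it uses Borsuk's retraction $\phi\colon N_s(M)\to M$ with $|\phi(q)-q|\le L\dist(q,M)$ (Lemma~\ref{lem:retraction}). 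With $\epsilon<1/(10L)$ and $r<s/2$, the map $\pi\circ\phi$ on $(x+P)\cap B(x,1.5r)$ moves points by at most $(2L\epsilon+2\beta)r<0.5r$. Brouwer's theorem then shows its image contains $(x+P)\cap B(x,r)$. This is precisely your covering statement, and it needs no homology of $M$ at all. Without such a controlled map, Hausdorff closeness of $M$ and $P$ alone (your plan to rerun the degree argument ``with the flatness already established'') still does not give nonzero degree.
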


We require some definitions and preliminary results from algebraic topology.

\begin{definition}
A \emph{Euclidean neighborhood retract} is a set in some Euclidean space that is a retract of one of its open neighborhoods. If $M\subseteq \mathbb{R}^n$ is a Euclidean neighborhood retract and satisfies the (integer singular) homology condition
$$ H_*(M, M\setminus \{x\}) \cong H_*(\RR^m, \RR^m \setminus\{x\}),$$
it is called a \emph{homology $m$-manifold}.
\end{definition}

All $m$-manifolds are homology $m$-manifolds, but there are examples of homology $m$-manifolds that are not manifolds \cite{Poinc}. We will need the following lemma, which is a consequence of the fact that the invariance of domain principle continues to hold for homology manifolds. Denote by $\textbf{0}$ the origin in $\R^n$.

\begin{lemma}\label{lem:homologyball}
Let $M$ be a closed subset of $\RR^n$ that is an LLC homology $m$-manifold. Suppose that
$$ \emptyset \neq M\cap B^n({\bf 0},1) \subset P\cap B^n({\bf 0},1)$$
for some $m$-plane $P\subset\RR^n$ passing through the origin.

Then 
$$ M\cap B^n({\bf 0},1) = P\cap B^n({\bf 0},1)$$
\end{lemma}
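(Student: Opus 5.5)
The plan is an invariance-of-domain argument combined with connectedness of the flat disk. Write $U=\operatorname{int}B^n(\mathbf{0},1)$ for the open ball and $D=P\cap U$, an open $m$-disk, which is connected and homeomorphic to $\R^m$. Set $A=M\cap U$. By hypothesis $A\subset D$. I will show that $A$ is both open and closed in $D$. Then nonemptiness gives $A=D$, and passing to closures gives the statement for the closed ball.

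\emph{Closedness.} Since $M$ is closed in $\R^n$, the set $A=M\cap U$ is relatively closed in $U$. Hence $A$ is relatively closed in $D\subset U$.

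\emph{Openness.} This is the step where the homology manifold hypothesis enters.
\begin{enumerate}
\item The set $A$ is an open subset of $M$. By excision, the local homology condition $H_*(A,A\setminus\{x\})\cong H_*(\R^m,\R^m\setminus\{x\})$ passes from $M$ to $A$.
\item An open subset of a Euclidean neighborhood retract is again an ENR. Therefore $A$ is a homology $m$-manifold.
\item Now $A$ is a homology $m$-manifold embedded, via the inclusion, into $D\cong\R^m$. I then invoke invariance of domain for homology manifolds: an injective continuous map from a homology $m$-manifold into an $m$-manifold is open.
\end{enumerate}
The justification I would give for step 3 is the standard local-homology one. Fix $x\in A$ and a small open $m$-ball $W\subset D$ around $x$. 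Suppose $x$ were not an interior point of $A$ in $D$. Then I would compare $H_m(A,A\setminus\{x\})\cong\mathbb{Z}$ with the local homology of $D$ at $x$, using Alexander duality in $W$. This would show that a neighborhood of $x$ in $A$ cannot be a proper closed subset of $W$ with a "boundary" near $x$. The cleanest route is to apply Alexander duality to the compact set $K=A\cap \overline{W'}$ for a smaller ball $W'$, which shows that $W'\setminus K$ has exactly the right number of components. I expect this step to be the main obstacle. If allowed, I would cite the known invariance of domain theorem for homology manifolds (e.g.\ via Čech/Alexander duality, as in Bredon) rather than reprove it.

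\emph{Conclusion.} The set $A$ is clopen in the connected set $D$. If $A$ is nonempty, then $A=D$. Taking closures and using that $M$ is closed gives
\[
M\cap B^n(\mathbf{0},1)\supset \overline{D}=P\cap B^n(\mathbf{0},1).
\]
The reverse inclusion is the hypothesis.

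The remaining subtlety is the degenerate case in which $M$ meets the closed ball only on the sphere $\partial B^n(\mathbf{0},1)$. In that case $A$ is empty and the clopen argument says nothing. A simple tangency (for instance a plane touching the sphere at one point) shows the conclusion genuinely fails there, and LLC does not seem to exclude this, so the hypothesis has to be read as $M\cap\operatorname{int}B^n(\mathbf{0},1)\neq\emptyset$. This is how it is used in the application, where the center lies in $M$. LLC itself plays no role beyond guaranteeing that we are in the stated setting.
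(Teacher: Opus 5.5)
Your proposal is correct and follows essentially the same route as the paper: $A$ is relatively closed in the flat disk because $M$ is closed, and relatively open by invariance of domain for (co)homology manifolds. The paper gets the latter by citing \cite[Theorem V.16.8]{Bredon} to pass from an LLC homology manifold to a cohomology manifold and \cite[Corollary V.16.19]{Bredon} for invariance of domain, then concludes from connectedness of the disk. Your decision to work with the open ball, and your observation that the closed-ball statement fails when $M$ meets $B^n(\mathbf{0},1)$ only at a tangency point of $P\cap\partial B^n(\mathbf{0},1)$, correct a real imprecision in the paper's phrasing; this is harmless in the proof of Theorem \ref{thm:LLCflat2}, since there the limit set contains the origin and the point $z'$ lies in the open ball.
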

\begin{proof}
We first assume that $M$ is connected. In this case, the lemma follows from two topological facts. We use the terminology of Bredon's book \cite{Bredon} to state these.
\begin{itemize}
\item If $M$ is a connected LLC homology $m$-manifold, then it is an ``$m-cm_{\mathbb{Z}}$'' (a cohomology $m$-manifold); see the implication (c)$\Rightarrow$(a) in \cite[Theorem V.16.8]{Bredon}. The same therefore holds for open subsets of $M$. 
\item Spaces of this type satisfy invariance of domain: if $A\subseteq B$ are both $m-cm_{\mathbb{Z}}$ spaces, then $A$ is open in $B$. See \cite[Corollary V.16.19]{Bredon}.
\end{itemize}

For our lemma, observe by the first fact above that $M\cap B^n({\bf 0},1)$ and $P\cap B^n({\bf 0},1)$ are both $m-cm_{\mathbb{Z}}$ spaces. It follows by the second fact that $M\cap B^n({\bf 0},1)$ is open in $P \cap B^n({\bf 0},1)$. On the other hand, $M\cap B^n({\bf 0},1)$ is closed in $P\cap B^n({\bf 0},1)$ since $M$ is closed. Connectedness of $P\cap B^n({\bf 0},1)$ then implies that $M\cap B^n({\bf 0},1)=P\cap B^n({\bf 0},1)$.

If $M$ is not connected, it is a union of connected components, each of which are LLC homology $m$-manifolds. The argument above then applies to each component and yields the same conclusion.
\end{proof}

\subsection{Preliminary version of Theorem \ref{thm:LLCfine2}}

We start by proving a weaker version of Theorem \ref{thm:LLCfine2}.

\begin{theorem}\label{thm:LLCflat2}
Let $C>0$ and $m,n\in\mathbb{N}$. For each $\epsilon>0$, there is a $\delta=\delta(\epsilon,m,n,C)>0$ with the following property:

If $M$ is a $(C,R)$-LLC $m$-manifold in $\RR^n$ and for some $x\in M, r<R$ there is an $m$-plane $P$ with
$$ \sup_{z\in M\cap B^n(x,r)} \dist(z, P \cap B^n(x,r)) \leq \delta r,$$
 then
 $$ \sup_{z\in P\cap B^n(x,r)} \dist(z, M \cap B^n(x,r)) \leq \epsilon r.$$
\end{theorem}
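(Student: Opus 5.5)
The plan is a compactness-and-contradiction argument: suppose the statement fails for some $\epsilon>0$, take a sequence of blow-ups, pass to a limit, and contradict Lemma \ref{lem:homologyball}.

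\textbf{Setting up the contradicting sequence.} Suppose there are $(C,R_j)$-LLC $m$-manifolds $M_j$, points $x_j\in M_j$, radii $r_j<R_j$ and $m$-planes $P_j$ with
$$\sup_{z\in M_j\cap B(x_j,r_j)}\dist(z,P_j\cap B(x_j,r_j))\le r_j/j,$$
but with some point $z_j\in P_j\cap B(x_j,r_j)$ satisfying $\dist(z_j,M_j\cap B(x_j,r_j))>\epsilon r_j$. We translate and rescale so that $x_j={\bf 0}$ and $r_j=1$; after a rotation we may take $P_j=P$ fixed. The LLC property is scale invariant, so the rescaled $M_j$ is $(C,1)$-LLC at all radii below $1$. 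By the compactness Lemma (8.2 of \cite{DS97}) we pass to a subsequence with $M_j\to M_\infty$ in the sense of Definition \ref{def:ptdhausdorff}, with $z_j\to z\in P\cap B({\bf 0},1)$ and ${\bf 0}\in M_\infty$. In the limit,
$$M_\infty\cap B({\bf 0},1)\subset P,\qquad \dist\bigl(z,M_\infty\cap B({\bf 0},1)\bigr)\ge\epsilon.$$
Some care is needed at the boundary of the unit ball. To handle this, I would first run the argument on a slightly smaller ball $B(0,1-\eta)$, or choose the bad point $z$ at distance at least $\epsilon/2$ from $M_\infty$ inside a slightly shrunk ball. I would also only use the LLC property of the $M_j$ at small scales in the interior of $B({\bf 0},1)$.

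\textbf{Reaching a contradiction.} The contradiction should come from Lemma \ref{lem:homologyball}, applied after rescaling to a small ball $B(y,s)$ with $y\in M_\infty$ and $B(y,s)\subset B({\bf 0},1)$. This would show $M_\infty\cap B(y,s)=P\cap B(y,s)$. Then $M_\infty\cap B({\bf 0},1)$ is relatively open in $P\cap B({\bf 0},1)$; it is also closed there and nonempty, so it equals $P\cap B({\bf 0},1)$ (up to the boundary issue), contradicting the gap near $z$. To apply the lemma one needs $M_\infty$, locally inside the open unit ball, to be an LLC homology $m$-manifold.

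\textbf{The main obstacle.} The hard step is exactly this: showing that a limit of uniformly LLC $m$-manifolds is a homology $m$-manifold that is LLC. The LLC property of the limit should follow by approximating contractions; this is the standard result that uniformly locally contractible compacta have limits that are ANRs with controlled contractions (Borsuk/Petersen-type convergence). Homology-manifold-ness is subtler. I would try to avoid it by a more direct argument. The $M_j$ lie within $1/j$ of $P$ in the ball, so the nearest-point projection $\pi:M_j\cap B({\bf 0},1)\to P$ is defined there. Using the LLC property, one builds a map back from $P$, as follows.
\begin{itemize}
\item Triangulate $P\cap B({\bf 0},1-\eta)$ at a small scale $\sigma$.
\item Send each vertex to a nearby point of $M_j$, where nearby points exist at least near the image of $\pi$.
\item Extend over the skeleta using LLC contractibility (linear control gives size $C^{m}\sigma$).
\end{itemize}
Composing with $\pi$ yields a map close to the identity on a disk region of $P$, hence of nonzero degree. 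So the image of $\pi$, and therefore $M_j$, must come within $\epsilon/2$ of every point of that disk, including $z_j$. The delicate part is starting the extension from a disk around $x_j$ and showing the nonzero-degree region cannot stop before reaching $z_j$. For this I would use that $M_j$ is an $m$-manifold without boundary: a proper map of an $m$-manifold onto part of an $m$-disk with nonzero local degree has image whose boundary lies near the sphere $\partial B$. This is where the manifold hypothesis enters, and it is the step I expect to require the most care.
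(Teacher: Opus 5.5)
There is a genuine gap. Your framework matches the paper's exactly: a contradiction sequence, rescaling to $B({\bf 0},1)$, a pointed Hausdorff limit $M_\infty$ with $M_\infty\cap B({\bf 0},1)\subset P$ and a gap near $z$, and then a contradiction with Lemma \ref{lem:homologyball}. The missing step is the one everything rests on. Lemma \ref{lem:homologyball}, and the Bredon invariance-of-domain facts behind it, require $M_\infty$ to be an LLC \emph{homology $m$-manifold}, and you explicitly leave this unproved. The paper supplies it by citing \cite[Proposition 2.19]{GCD}, which says that limits of uniformly LLC $m$-manifolds are LLC homology $m$-manifolds. It also notes that the Ahlfors regularity hypothesis there is used only to guarantee finite topological dimension, which is automatic for a subset of $\R^n$. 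With that input, your first route becomes the paper's proof; without it, the contradiction cannot be reached.

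The direct degree argument you propose as a replacement does not close the gap, for two reasons.

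\emph{It is circular.} To send the vertices of a triangulation of $P\cap B({\bf 0},1-\eta)$ to nearby points of $M_j$, you need $M_j$ to come close to those vertices, and that is exactly the conclusion to be proved. This applies even to a small disk around ${\bf 0}$. The one-sided flatness you have at scale $1$ only reproduces the same problem at every smaller scale, so it never gives you a starting disk on which the map back to $M_j$ is defined.

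\emph{The propagation step is invalid as stated.} A nonzero local degree of $\pi$ at one preimage point does not give a nonzero degree for the proper map $\pi\colon M_j\cap\pi^{-1}(V)\to V$, where $V$ is the interior of $P\cap B({\bf 0},1-\eta)$. Another sheet, for example a fold of $M_j$ back over $P$, can cancel it. Then nothing forces $\pi(M_j)$ to reach $z_j$, and the manifold-without-boundary property alone does not help.

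Ruling out such folds is precisely where the LLC hypothesis has to be used in an essential way, and your sketch leaves that content open.
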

Theorem \ref{thm:LLCflat2} essentially says that (for LLC manifolds) if $\beta$ is small, then $\theta$ is small, but without linear control.

\begin{proof}[Proof of Theorem \ref{thm:LLCflat2}]
Suppose the conclusion were false. Then there would exist positive constants $C, m, n,\epsilon$ with the following property: For each $k\in\mathbb{N}$, there is an $R_k>0$ and an $m$-manifold $M_k\subset \mathbb{R}^n$ that is $(C,R_k)$-LLC and fails the conclusion of the theorem for $\delta=\frac{1}{k}$. 

Thus, each $M_k$ contains a ball $B^n(x_k, r_k)$, with $r_k< R_k$, for which there is an $m$-plane $P_k$ satisfying
\begin{equation}\label{eq:Mkflat}
\sup_{z\in M_k\cap B^n(x_k,r_k)} \dist(z, P_k \cap B^n(x_k,r_k)) \leq \frac1k r_k.
\end{equation}
and
\begin{equation}\label{eq:Mknotflat}
\sup_{z\in P_k\cap B^n(x_k,r_k)} \dist(z, M_k \cap B^n(x_k,r_k))> \epsilon r_k.
\end{equation}

Without loss of generality, we may translate so that each $x_k=0\in\RR^n$. 
Let $X_k = \frac{1}{r_k} M_k$. Then each $X_k$ is a $(C,1)$-LLC topological manifold (since $R_k/r_k > 1$).

Equations \eqref{eq:Mkflat} and \eqref{eq:Mknotflat} translate to:
\begin{equation}\label{eq:Xkflat}
\sup_{z\in X_k\cap B^n({\bf 0},1)} \dist(z, P_k \cap B^n({\bf 0},1)) \leq \frac1k.
\end{equation}
and
\begin{equation}\label{eq:Xknotflat}
\sup_{z\in P_k\cap B^n({\bf 0},1)} \dist(z, X_k \cap B^n({\bf 0},1))> \epsilon.
\end{equation}

We now pass to an appropriate subsequence. For simplicity, we will continue to index this subsequence by $\{X_k\}$, $\{P_k\}$, etc. We may choose the subsequence so that:
\begin{enumerate}[(i)]
\item The sequence $\{X_k\}$  converges in the sense of Definition \ref{def:ptdhausdorff} to a closed set $X\subset \RR^n$. 
\item The sequence of $m$-planes $\{P_k\}$ converges in the pointed Hausdorff sense to an $m$-plane $P$. 
\item There are points $z_k \in P_k \cap B^n({\bf 0},1)$ with $\dist(z_k, X_k\cap B^n({\bf 0},1))>\epsilon$ and such that
$$ z_k \rightarrow z$$
for some point $z\in P \cap B^n({\bf 0},1).$
\end{enumerate}
Note that \eqref{eq:Xkflat} and the fact that $0\in X_k$ imply that $P$ passes through the origin.

By \cite[Proposition 2.19]{GCD}, the closed set $X$ is in fact an LLC homology $m$-manifold. (Note that, while the stated assumptions of that result include Ahlfors regularity, this is not required here. The Ahlfors regularity assumption in \cite[Proposition 2.19]{GCD} is used only to obtain the fact that the (Gromov-)Hausdorff limit has finite topological dimension. In our case, that is automatic, as the limit is a subset of $\mathbb{R}^n$.)

Equation \eqref{eq:Xkflat} implies that 
$$X \cap B^n({\bf 0},1) \subset P\cap B^n({\bf 0},1).$$

Item (iii) implies that the point $z\in P \cap B^n({\bf 0},1)$ is at distance at least $\epsilon$ from $X$. It follows that the point point $z' = (1-\epsilon/10)z$ is in $P\cap B^n({\bf 0},1)$ but not in $X$. Thus, $X\cap B^n({\bf 0},1)$ is a proper subset of $P\cap B^n({\bf 0},1)$. This is impossible, by Lemma \ref{lem:homologyball}.
\end{proof}

\subsection{Proof of Theorem \ref{thm:LLCfine2}}
We require the following fact about LLC sets due to Borsuk.

\begin{lemma}[{\cite[Section 13]{Borsuk}}]\label{lem:retraction}
If $E$ is a closed $(C,R)$-LLC subset of $\RR^n$, then there are $s=s(C,R)$ and $L=L(C,R)$ with the following property:

There is a continuous retraction
$$ \phi: N_{s}(E) \rightarrow E$$
such that
$$ |\phi(x)-x| \leq L\dist(x,E)$$
for all $x\in N_{s}(E)$.
\end{lemma}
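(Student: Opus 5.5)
The plan is to build $\phi$ by the classical Whitney-cube/skeleton extension argument, using the LLC hypothesis to extend maps across one skeleton at a time. First, we fix a Whitney decomposition of the open set $\RR^n\setminus E$ into closed dyadic cubes $Q$ with $\diam Q \le \dist(Q,E)\le 4\diam Q$. We then subdivide the cubes so as to obtain a locally finite triangulation $\mathcal T$ of $\RR^n\setminus E$. The simplices should satisfy $\diam\sigma\simeq_n \dist(\sigma,E)$, and adjacent simplices should have comparable sizes. Only simplices meeting $N_s(E)$ matter, where $s$ is chosen at the end. On each vertex $v$ of $\mathcal T$ we set $\phi(v)$ to be a nearest point of $E$, so that $|\phi(v)-v|=\dist(v,E)$.

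Next we extend $\phi$ inductively over the $k$-skeleta, $k=1,\dots,n$. The inductive claim is the following. There are constants $A_k=A_k(C,n)$ such that, for every $k$-simplex $\sigma$ with a chosen vertex $v_\sigma$, the set $\phi(\sigma)$ lies in $E\cap B(\phi(v_\sigma),A_k\diam\sigma)$. The base case holds because adjacent vertices $v,w$ satisfy $|\phi(v)-\phi(w)|\le |v-w|+\dist(v,E)+\dist(w,E)\lesssim_n \diam\sigma$.

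For the inductive step, suppose $\phi$ is defined on $\partial\sigma$ for a $(k+1)$-simplex $\sigma$. By the inductive claim applied to the faces, together with the comparability of adjacent sizes, we get $\phi(\partial\sigma)\subset E\cap B(\phi(v_\sigma),r)$ with $r=A'_k\diam\sigma$. Provided $r<R$, the LLC condition says $E\cap B(\phi(v_\sigma),r)$ contracts inside $E\cap B(\phi(v_\sigma),Cr)$. Hence the map $\phi|_{\partial\sigma}\colon \partial\sigma\cong \S^k\to E$ is null-homotopic within the larger ball, and it extends to $\sigma\cong$ the cone on $\partial\sigma$. This gives the claim with $A_{k+1}=CA'_k$. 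After $n$ steps, every $n$-simplex $\sigma$ satisfies $\phi(\sigma)\subset B(\phi(v_\sigma),A_n\diam\sigma)$. Consequently, for $x\in\sigma$,
$$|\phi(x)-x|\le |\phi(v_\sigma)-v_\sigma|+|v_\sigma-x|+A_n\diam\sigma\lesssim_{C,n}\dist(x,E),$$
which is the estimate with $L=L(C,n)$. We define $\phi=\mathrm{id}$ on $E$.

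Then we verify the two remaining points. Continuity on the complement of $E$ holds by construction, since the extensions agree on shared faces. Continuity at a point $x_0\in E$ follows from the displacement bound: if $x\to x_0$, then $|\phi(x)-x_0|\le |x-x_0|+L\dist(x,E)\to0$. The constraint $r<R$ in each inductive step is ensured by restricting to simplices meeting $N_s(E)$. For these, $\diam\sigma\lesssim_n s$, so choosing $s=R/(c_n A_n)$ with a suitable dimensional constant $c_n$ makes every ball radius used smaller than $R$. Strictly speaking, we construct $\phi$ on the union of all simplices meeting $N_{2s}(E)$, and this union contains $N_s(E)$.

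The main obstacle is the bookkeeping in the inductive step. We must make sure that the images of all faces of $\sigma$ fit in a single ball centered at $\phi(v_\sigma)$ with radius controlled by $\diam\sigma$, despite the different base vertices and the slightly different sizes of neighboring simplices. This is where the Whitney property (adjacent simplices have comparable diameters) is essential, and where the constants grow like $C^n$. I would handle it by proving the inductive claim in the stronger form that $\phi(\sigma)\subset B(\phi(w),A_k\diam\sigma)$ for every vertex $w$ of $\sigma$. This removes the dependence on the choice of $v_\sigma$.
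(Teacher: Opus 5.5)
Your argument is correct. The paper gives no proof of this lemma and simply imports it from Borsuk, so the comparison is with the classical argument behind such results.

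That classical argument uses the same skeleton-by-skeleton extension, organized differently:
\begin{itemize}
\item cover $\RR^n\setminus E$ by a locally finite family of sets whose size is comparable to their distance from $E$;
\item map $\RR^n\setminus E$ into the nerve of this cover via a partition of unity;
\item build the map from the nerve into $E$ inductively on skeleta, using local contractibility.
\end{itemize}
The displacement bound $|\phi(x)-x|\le L\dist(x,E)$ comes from the same scale bookkeeping you do. The nerve route avoids the one fact you use as a black box, namely the existence of a shape-regular triangulation of $\RR^n\setminus E$ with every simplex inside a single Whitney cube. The price is working with an abstract complex whose dimension is the multiplicity of the cover rather than $n$. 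That costs nothing here, because LLC gives contractibility in all dimensions. Your version is more concrete, since the extension over each simplex is an explicit cone construction inside $\RR^n$.

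Two minor points. First, the ``main obstacle'' you flag is not really there. The faces of $\sigma$ are subsets of $\sigma$, so their diameters are at most $\diam\sigma$. The only geometric input the inductive step needs is $\dist(v,E)\lesssim_n\diam\sigma$ for the vertices $v$ of $\sigma$. Comparability of neighbouring simplex sizes is used only to build the triangulation. Second, the set on which you run the induction should be the subcomplex of all simplices meeting $N_{2s}(E)$ together with all of their faces. Some faces need not themselves meet $N_{2s}(E)$. They all have diameter at most $2s$, however, since they lie in Whitney cubes $Q$ with $\diam Q\le\dist(Q,E)\le 2s$. So your choice of $s$ still keeps every radius to which you apply LLC below $R$.
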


We are now ready to prove Theorem \ref{thm:LLCfine2}.

\begin{proof}[Proof of Theorem \ref{thm:LLCfine2}]
Fix constants $C\geq 1$ and $R>0$. Obtain parameters $s=s(C,R)>0$ and $L=L(C,R)\geq 1$ from Lemma \ref{lem:retraction}. Choose $\epsilon>0$ sufficiently small depending on these; to be specific, we require that $\epsilon < \frac{1}{10L}$. Obtain $\delta=\delta(\epsilon)>0$ from Theorem \ref{thm:LLCflat2}; we may assume that $\delta< \frac{1}{10}$.

Fix a closed $(C,R)$-LLC $m$-manifold $M\subseteq \RR^n$. By Lemma \ref{lem:retraction}, we have a retraction $\phi\colon N_s(M)\rightarrow M$ such that
\begin{equation}\label{eq:retraction}
 |\phi(q)-q| \leq L\dist(q,M) \text{ for all } q\in N_{s}(M).
\end{equation}

Fix a ball $B=B^n(x,r)$ with $x\in M$ and $0<r<R':=s/2$. Let
$$ \beta := \beta^m_M(2B) \quad\text{and}\quad \theta:=\theta^m_M(B).$$
Our claim is that if $\beta<\delta$, then $\theta \leq 4\beta$. We may assume during the proof that $\beta>0$; the $\beta=0$ case follows by a limiting argument.


Suppose $\beta <\delta$, and let $P$ be an $m$-plane through $x$ with 
\begin{equation}\label{eq:MnearP}
 M\cap 2B \subseteq N_{2\beta r}(P).
\end{equation}

Proving that $\theta\leq 4\beta$ requires showing that $M\cap B$ is in the closed $4\beta r$-neighborhood of $P\cap B$, and, vice versa, that $P\cap B$ is in the closed $4\beta r$-neighborhood of $M\cap B$.

The former is easy: if $q\in M\cap B^n(x,r) \subseteq M \cap B^n(x,2r)$, then $q$ is within $2\beta r$ of a point of $P$. This point may be taken to be the orthogonal projection of $q$ onto $P$, which also lies in $P\cap B^n(x,r)$.

For the remainder of the proof, we work to prove that 
$$ P\cap B \subseteq N_{4\beta r}(M\cap  B).$$

Since $\beta<\delta$, Theorem \ref{thm:LLCflat2} implies that 
\begin{equation}\label{eq:PnearM}
 P \cap 2B \subseteq N_{2\epsilon r}(M\cap 2B).
\end{equation}

Let $B'= B^n(x,1.5r)$. Consider the map
$$ h = p\circ \phi\colon P\cap B' \rightarrow P, $$
where $p$ denotes the orthogonal projection to $P$.

Note that if $z\in P\cap B'$, then
$$ |\phi(z)-z| \leq L\dist(z,M) \leq 2L\epsilon r,$$
by \eqref{eq:PnearM}. Thus, $\phi(z)\in 2B$, since $2L\epsilon < 0.5$. It follows from \eqref{eq:MnearP} that the projection $p$ from $\phi(z)$ to $P$ moves $\phi(z)$ a distance at most $2\beta r$.

Thus, for all $z\in P \cap B'$,
\begin{equation}\label{eq:hclose}
     |h(z)-z| \leq (2L\epsilon+2\beta)r \leq (2L\epsilon + 2\delta)r < 0.5r.
\end{equation}
In particular, $h(P\cap B')\subseteq P\cap 2B$. Moreover, $h$ moves each point of $P\cap B'$ a distance less than $0.5r$, and $B'$ is a ball centered in $P$ of radius $1.5 r$.

Basic topological degree theory then implies that 
\begin{equation}\label{eq:hsurjective}
 h(P\cap B') \supseteq P \cap B.
\end{equation}
(For instance, given $y\in P\cap B$ one may apply Brouwer's fixed point theorem to the map $z\mapsto x-h(x)+y$. This function maps $P\cap B'$ into itself by \eqref{eq:hclose}. A fixed point of this map is then a pre-image of $y$ under $h$.)

Now consider any point $z\in P\cap B$. Let $z'$ be a point in $P \cap B^n(x,(1-2\beta)r)$ with $|z'-z|\leq 2\beta r$. By \eqref{eq:hsurjective}, the point $z'$ is equal to $h(w)=p(\phi(w))$ for some $w\in P\cap B'$. 

Let $y=\phi(w)\in M\cap 2B$. This point is in $N_{2\beta r}(P)$, by \eqref{eq:MnearP}, so
$$ |y-z'| = |y-p(y)|\leq 2 \beta r.$$
In addition, since $(y-z') \perp (z'-x)$, we have
$$ |y-x| = \sqrt{|y-z'|^2 +|z'-x|^2} \leq \sqrt{(2\beta r)^2 + ((1-2\beta) r)^2} < r.$$
Thus, $y\in M \cap B$ and
$$ |z-y| \leq |z-z'| + |z'-y| \leq 2\beta r + 2\beta r.$$
This shows that $P\cap B \subseteq N_{4\beta r}(M\cap B)$, and hence that $\theta \leq 4\beta$.
\end{proof}

\section{Flatness of quasiplanes}\label{sec:quasiplanes}
Here we show how Corollary \ref{lem:ABT2} follows from Theorem \ref{thm:LLCfine}. Denote by $\textbf{0}$ the origin in $\R^N$.

\begin{proof}[{Proof of Corollary \ref{lem:ABT2}}]
Applying appropriate similarities and using Remark \ref{rem:similarities}, we may assume that $v={\bf 0}$, $f({\bf 0})={\bf 0}$, $r=1$, and $\diam{f(B^N({\bf 0},2))} =1$. We may further assume that $H_f(B^N({\bf 0},2))<2$, as otherwise there is nothing to prove. In this case, $f$ is weakly $H$-quasisymmetric on $B^N({\bf 0},2)$ for some $H\in (1,2)$, thus also weakly $2$-quasisymmetric. 

Fix $e\in \mathbb{S}^{N-1}$ and let $R=\frac12|f(e)|$. By the weak 2-quasisymmetry of $f$, we have that $\frac12|f(e)| \leq |f(x)|$ for every $x\in \mathbb{S}^{N-1}$. Therefore, $B^N({\bf 0},R) \subset f(B^N({\bf 0},2))$.

By Lemma \ref{lem:weaktoQS}, there exists a homeomorphism $\eta_0:[0,\infty) \to [0,\infty)$ that depends only on $N$, such that $f$ is $\eta_0$-quasisymmetric. By the invariance of the LLC property under quasisymmetric maps (see \cite[p.137]{BonK}), there exists $C_0>1$ that depends only on $N$ such that $f(B^N({\bf 0},2))$ and $f(\partial B^N({\bf 0},2))$ are $(C_0,1)$-LLC. 

For the rest of the proof, abusing notation, we identify $\R^N$ with the plane $\R^N\times\{0\}$ in $\R^{N+1}$. Define $\Omega = f(B^N({\bf 0},2)) \times [0,3] \subset \R^{N+1}$. Note that $\partial\Omega$ is a topological $N$-sphere with $f(B^N({\bf 0},2))\subset \partial\Omega$ and if $\rho\leq R$, then
\[ B^{N+1}({\bf 0},\rho) \cap \partial\Omega = B^{N}({\bf 0},\rho) \cap f(B^N({\bf 0},2)).\] 
We claim that $\partial\Omega$ is $(C_1,1)$-LLC for some $C_1>1$ depending only on $N$. For the proof of the claim, fix $x\in \partial\Omega$ and let $\rho\in (0,1)$. Note that $B^{N+1}(x,\rho)$ intersects at most one of the sets $f(B^N({\bf 0},2))$ and $f(B^N({\bf 0},2))\times\{3\}$. There are 3 possible cases. 

\emph{Case 1.} Suppose that $B^{N+1}(x,r)$ intersects $f(B^N({\bf 0},2))$. Denote by $\pi_1:\R^{N+1}\to\R^N\times\{0\}$ the orthogonal projection. Then $|x-\pi_1(x)|\leq \rho$, 
\[ \pi_1(B^{N+1}(x,\rho)\cap\partial\Omega) = B^N(\pi_1(x),\rho) \subset f(B^N({\bf 0},2)),\] 
and we can linearly homotope $B^{N+1}(x,\rho)\cap\partial\Omega$ onto $B^N(\pi_1(x),\rho)\cap f(B^N({\bf 0},2))$ inside $\partial\Omega \cap B^{N+1}(x,2\rho)$. Now, using the $(C_0,1)$-LLC property of sets $f(B^N({\bf 0},2))$, we can contract the set $B^N(\pi_1(x),\rho)\cap f(B^N({\bf 0},2))$ to a point inside 
\[ B^N(\pi_1(x),C_0\rho)\cap f(B^N({\bf 0},2)).\]

\emph{Case 2.} Suppose that $B^{N+1}(x,r)$ intersects $f(B^N({\bf 0},2))\times\{3\}$. We work as in Case 1, using a projection $\pi_2$ on $\R^N\times\{3\}$.

\emph{Case 3.} Suppose that $B^{N+1}(x,r)\cap\partial\Omega$ intersects neither $f(B^N({\bf 0},2))\times\{0\}$ nor $f(B^N({\bf 0},2))\times\{3\}$. Let $x_{N+1}$ be the $(N+1)$-coordinate of $x$ and let $\pi_3$ be the projection of $\R^{N+1}$ onto $\R^N\times\{x_{N+1}\}$. We linearly homotope $B^{N+1}(x,r)\cap\partial\Omega$ onto its image under $\pi_3$, namely
\[\left(B^N(\pi_3(x),\rho)\cap (f(\partial B^N({\bf 0},2)))\right)\times\{x_{N+1}\},\] 
inside $B^{N+1}(x,r)\cap\partial\Omega$. Now, use the $(C_0,1)$-LLC property of $f(\partial B^N({\bf 0},2))$ to contract $B^N(\pi_3(x),\rho)\cap (f(\partial B^N({\bf 0},2)))\times\{x_{N+1}\}$ to a point inside 
\[ \left(f(\partial B^N({\bf 0},2))\times\{x_{N+1}\}\right) \cap B^{N+1}(x,C_0\rho). \]
This completes the proof of the claim.

Using our claim, Theorem \ref{thm:LLCfine} yields a constant $K>1$ that depends only on $N$ such that if $V\subset \R^N$ is an $n$-dimensional plane, then
\begin{align*} 
\theta_{f(V\cap B^N({\bf 0},2))}^n(B^N({\bf 0},\tfrac12R)) = \theta^n_{\partial\Omega}(B^N({\bf 0},\tfrac12R)) &\leq K\beta^n_{\partial\Omega}(B^N({\bf 0},R))\\
&= K \beta_{f(V\cap B^N({\bf 0},2))}^n(B^N({\bf 0},R))\\
&\leq 72NK(H_f(B^N({\bf 0},2)) -1) \end{align*}
with the last inequality following from Lemma \ref{lem:ABT}. This completes the proof of the corollary.
\end{proof}

\section{Menger curvature and parametrizations}\label{sec:mengerparam}

In this section we relate the Menger curvature with local smooth parametrizations and prove Theorem \ref{cor:kolasinski}. 
%

We start by generalizing the notion of geometric curvature energy. 
Fix a set $X\subset \mathbb{R}^n$ and a parameter $m\in \{0,1, \dots, n\}$. Following \cite{K}, for $l\in \{1, 2, \dots, m+2\}$, and $p>0$, we define the geometric curvature energy
$$ \mathcal{E}^{m,l}_p(X) = \int_{X^l} \sup_{x_l, \dots, x_{m+1}\in X} \mathcal{K}(x_0, \dots, x_{m+1})^p \, d\mathcal{H}^{ml}(x_0,\dots,x_{l-1}).$$
Note that, as a special case, $\mathcal{E}^{m,m+2}_p(X) = \mathcal{E}^{m+2}_p(X)$.

We also recall two important properties of Menger curvature that will be used in the next section. The first is a scaling property.

\begin{lemma}\label{lem:mengerscale}
Let $X\subset \R^N$ be a compact set and let $m\in\{0,\dots,N\}$ and $p>0$. If $\phi:\R^N \to \R^N$ is a similarity map with scaling factor $\lambda>0$, then
\[ \mathcal{E}^{m,l}_p(\phi(X)) = \lambda^{ml-p}\mathcal{E}^{m,l}_p(X). \]
\end{lemma}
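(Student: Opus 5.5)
The plan is a change of variables: push the integral for $\mathcal{E}^{m,l}_p(\phi(X))$ back to $X$ through $\phi$, using two scaling facts. The first is that $\mathcal{K}$ scales by $\lambda^{-1}$. The second is that the product Hausdorff measure $\mathcal{H}^{ml}$ on $(\R^N)^l$ scales by $\lambda^{ml}$, which is \eqref{eq:Hausscale}.

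First I treat the integrand. Let $x_0,\dots,x_{m+1}\in\R^N$, not all equal. The simplex $\Delta(\phi(x_0),\dots,\phi(x_{m+1}))$ is exactly $\phi(\Delta(x_0,\dots,x_{m+1}))$, because a similarity of $\R^N$ is an affine map (an isometry composed with a dilation) and affine maps send convex hulls to convex hulls. By \eqref{eq:Hausscale} the numerator of $\mathcal{K}$ gets multiplied by $\lambda^{m+1}$. The diameter gets multiplied by $\lambda$, so the denominator gets multiplied by $\lambda^{m+2}$. Hence
\[ \mathcal{K}(\phi(x_0),\dots,\phi(x_{m+1})) = \lambda^{-1}\mathcal{K}(x_0,\dots,x_{m+1}). \]
When all the points coincide, both sides are $0$, so the identity still holds. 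Since $\phi$ is a bijection of $X$ onto $\phi(X)$, taking the supremum over $x_l,\dots,x_{m+1}\in X$ gives
\[ \sup_{y_l,\dots,y_{m+1}\in\phi(X)}\mathcal{K}(\phi(x_0),\dots,\phi(x_{l-1}),y_l,\dots,y_{m+1})^p = \lambda^{-p}\sup_{x_l,\dots,x_{m+1}\in X}\mathcal{K}(x_0,\dots,x_{m+1})^p. \]

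Next I treat the measure. The map $\Phi=\phi\times\cdots\times\phi$ ($l$ factors) is a similarity of $(\R^N)^l$ with factor $\lambda$, since its Euclidean product metric scales exactly by $\lambda$. The measure $\mathcal{H}^{ml}$ on $X^l$ is Hausdorff measure in this product space, so \eqref{eq:Hausscale} gives $\mathcal{H}^{ml}(\Phi(A))=\lambda^{ml}\mathcal{H}^{ml}(A)$ for Borel $A\subset X^l$. In other words, the pushforward of $\mathcal{H}^{ml}|_{X^l}$ under $\Phi$ equals $\lambda^{-ml}\,\mathcal{H}^{ml}|_{\phi(X)^l}$. This is the only point needing care: I must read $\mathcal{H}^{ml}$ on $X^l$ as the Hausdorff measure of the product space, and not as a product of Hausdorff measures. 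Either reading scales the same way, so the conclusion is unaffected.

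Finally I change variables. For any nonnegative measurable $g$ on $\phi(X)^l$,
\[ \int_{\phi(X)^l} g\,d\mathcal{H}^{ml} = \lambda^{ml}\int_{X^l} g\circ\Phi\,d\mathcal{H}^{ml}. \]
This holds for indicator functions by the previous step, then for simple functions by linearity, then for all nonnegative measurable $g$ by monotone convergence. I apply it with $g$ equal to the supremum integrand and substitute the identity from the first step. This gives $\mathcal{E}^{m,l}_p(\phi(X))=\lambda^{ml}\lambda^{-p}\mathcal{E}^{m,l}_p(X)$, as claimed.

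Measurability of the supremum integrand is implicit in the definition of $\mathcal{E}^{m,l}_p$, and it is carried over by the Borel bijection $\Phi$. So no step is a real obstacle. The only point to state carefully is the measure convention in the second step.
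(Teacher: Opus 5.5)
Your proof is correct and follows the paper's route. The paper also shows that $\mathcal{K}$ scales by $\lambda^{-1}$ by applying \eqref{eq:Hausscale} to $\mathcal{H}^{m+1}$, so the supremum integrand scales by $\lambda^{-p}$, and then applies \eqref{eq:Hausscale} to $\mathcal{H}^{ml}$; your change-of-variables formula and your remark on the product-measure convention only make explicit what the paper leaves implicit.
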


\begin{proof}
Given $y_0,\dots,y_{l-1} \in \phi(X)$, let $x_i = \phi^{-1}(y_i)\in X$ for $i=0,\dots,l-1$, and by \eqref{eq:Hausscale} applied on $\mathcal{H}^{m+1}$,
\begin{align*}
\sup_{y_l,\dots,y_{m+1} \in \phi(X)}&\left( \frac{\mathcal{H}^{m+1}(\Delta(y_0,\dots,y_{m+1}))}{(\diam{\Delta(y_0,\dots,y_{m+1})})^{m+2}} \right)^p\\
&= \sup_{x_l,\dots,x_{m+1} \in X}\left( \frac{\mathcal{H}^{m+1}(\Delta(\phi(x_0),\dots,\phi(x_{m+1})))}{(\diam{\Delta(\phi(x_0),\dots,\phi(x_{m+1}))})^{m+2}} \right)^p\\ 
&= \lambda^{-p} \sup_{x_l,\dots,x_{m+1} \in X}\left( \frac{\mathcal{H}^{m+1}(\Delta(x_0,\dots,x_{m+1}))}{(\diam{\Delta(x_0,\dots,x_{m+1})})^{m+2}} \right)^p.
\end{align*}
Now applying \eqref{eq:Hausscale} on $\mathcal{H}^{ml}$, we obtain the lemma.
\end{proof}

The second property is about the finiteness of Menger energies on $C^{1,\alpha}$ embedded compact manifolds. Kolasi\'nski \cite[Proposition 1.9]{K} showed that If $M\subset\R^n$ is a compact, $m$-dimensional $C^2$ manifold embedded in $\R^n$, then the discrete curvature $\mathcal{K}$ is bounded on $M^{m+2}$. While this is enough for our main theorems, we give below a stronger version of this result with $C^2$ replaced by $C^{1,1}$.

\begin{lemma}\label{lem:C2}
Let $M\subset\R^n$ be a compact, $m$-dimensional $C^{1,\alpha}$ manifold embedded in $\R^n$. If $\alpha <1$, then $\mathcal{K}$ is in $L^p(M^{m+2})$ for all $p< \frac{m(m+1)+1}{1-\alpha}$. If $\alpha=1$, then $\mathcal{K}$ is bounded on $M^{m+2}$.
\end{lemma}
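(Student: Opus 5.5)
We would prove the lemma by obtaining a pointwise bound on $\mathcal{K}$ in terms of the diameter of the simplex, and then integrating that bound against $\mathcal{H}^{m(m+2)}$ on $M^{m+2}$.

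\emph{Step 1: local graph description.} By compactness and the $C^{1,\alpha}$ hypothesis, we fix $r_0>0$ and $c>0$ depending only on $M$ with the following property. For every $x_0\in M$, the set $M\cap B^n(x_0,r_0)$ is the graph over $x_0+T_{x_0}M$ of a $C^{1,\alpha}$ map $u_{x_0}$, and this map satisfies
\[ \dist(y, x_0+T_{x_0}M)\leq c|y-x_0|^{1+\alpha}\quad\text{for all } y\in M\cap B^n(x_0,r_0). \]
For $\alpha=1$ this is the usual $C^{1,1}$ Taylor bound.

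\emph{Step 2: pointwise estimate.} Let $d=\diam\Delta(x_0,\dots,x_{m+1})$.
\begin{enumerate}[(a)]
\item If $d\geq r_0$, the trivial bound $\mathcal{H}^{m+1}(\Delta)\lesssim d^{m+1}$ gives $\mathcal{K}\lesssim r_0^{-1}$.
\item If $d<r_0$, all vertices lie in a slab of half-width $h_i=c|x_i-x_0|^{1+\alpha}$ around the $m$-plane $x_0+T_{x_0}M$. We expand the $(m+1)$-volume as a determinant in the edge vectors $x_i-x_0$, split each edge vector into its tangential and normal parts, and use multilinearity. Every term contains exactly one normal component, so
\[ \mathcal{H}^{m+1}(\Delta)\lesssim \sum_{i=1}^{m+1} |x_i-x_0|^{1+\alpha}\prod_{j\neq i,0}|x_j-x_0| . \]
Hence $\mathcal{K}\lesssim d^{\alpha-1}$ in general, and $\mathcal{K}\lesssim 1$ when $\alpha=1$.
\end{enumerate}
Together, (a) and (b) already prove the case $\alpha=1$.

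\emph{Step 3: integration for $\alpha<1$.} By $m$-regularity of $\mathcal{H}^m$ on $M$ (with constants from compactness), we have
\[ \mathcal{H}^{m(m+1)}\{(x_1,\dots,x_{m+1}): \max_i|x_i-x_0|\le t\}\lesssim t^{m(m+1)}. \]
We decompose $M^{m+2}$ dyadically in $d$ and integrate the finer bound from Step 2(b), with each factor $|x_j-x_0|$ left in the product rather than replaced by $d$. To sharpen the crude estimate we relabel so that $x_0$ is a vertex adapted to the configuration, for instance the endpoint of a shortest edge. The volume of the set where that edge has length $\rho\ll d$ contributes an extra factor of $\rho^m$, while the numerator keeps factors of $\rho$. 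The aim is to reduce finiteness of the energy to convergence of a one-variable integral
\[ \int_0^{r_0} t^{-(1-\alpha)p+m(m+1)}\,\frac{dt}{t}\,, \]
with the exponent shifted by the gain from the short edge.

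\emph{Main obstacle.} The main obstacle is matching the sharp exponent $\frac{m(m+1)+1}{1-\alpha}$. The crude bound $\mathcal{K}\lesssim d^{\alpha-1}$ only yields $p<\frac{m(m+1)}{1-\alpha}$. The extra $+1$ must come from exploiting the product structure of the determinant together with the choice of base vertex. We would first check the case $m=1$ by hand (triangles on a $C^{1,\alpha}$ curve) to identify exactly which configurations of the vertices produce the gain, and then replicate that bookkeeping in general.
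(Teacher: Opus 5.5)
\paragraph{Where your proposal matches the paper.} Your Steps 1--2 are the paper's argument. The paper quotes Kolasi\'nski's Lemma 1.10 for $\mathcal{K}\lesssim d^{\alpha-1}$, where you derive it by multilinearity. (The nonzero terms of that expansion contain \emph{at least} one normal component, not exactly one, but your bound covers them.) These steps settle the case $\alpha=1$. For $\alpha<1$, the paper does the same crude dyadic count you describe, $\mathcal{E}^{m+2}_p(M)\lesssim\sum_k 2^{kp(1-\alpha)}2^{-km(m+1)}$.

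\paragraph{The gap, and why it cannot be closed.} The gap you flag is genuine: nothing in Step 3 produces the extra $+1$. The paper does not close it either. Its final series $\sum_k 2^{k(p(1-\alpha)-m(m+1))}$ converges only when $p(1-\alpha)<m(m+1)$, so its proof gives exactly $p<\frac{m(m+1)}{1-\alpha}$, the same range your crude bound gives. Your proposed repair cannot work. A short edge produces a gain only on degenerate simplices, while the divergence comes from nondegenerate simplices whose edges are all comparable to $d$.

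In fact the stated range is false. Take $m=1$ and a closed $C^{1,\alpha}$ curve containing the graph of $f(t)=\sum_{k\ge1}b^{-k(1+\alpha)}\cos(b^kt)$, with $b$ a large even integer. Set $h=\pi b^{-j}$ and consider $f(t+h)+f(t-h)-2f(t)$:
\begin{itemize}
\item the terms with $k>j$ vanish, since $\cos(b^{k-j}\pi)=1$;
\item the terms with $k<j$ total at most a small multiple of $h^{1+\alpha}$;
\item the $k=j$ term equals $-4b^{-j(1+\alpha)}\cos(b^jt)$.
\end{itemize}
Hence, for a set of $t$ of proportion bounded below, the triangle with vertices $(t\pm h,f(t\pm h))$ and $(t,f(t))$ has area $\tfrac h2|f(t+h)+f(t-h)-2f(t)|\gtrsim h^{2+\alpha}$. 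This bound survives moving each parameter by $\epsilon h$: subtract the tangent line (a shear, which preserves area) and use $|f'(s)-f'(t)|\lesssim h^\alpha$.

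So $\mathcal{K}\gtrsim h^{\alpha-1}$ on a set of triples of $\mathcal{H}^3$-measure $\gtrsim h^2$. Summing over $j$ shows $\mathcal{E}^3_p=\infty$ for every $p\ge\frac{2}{1-\alpha}$, whereas the lemma claims finiteness for all $p<\frac{3}{1-\alpha}$. The same construction works in every dimension: take $F(t)=f(t_1)$ on $\R^m$ and the configurations $t_0,t_0+he_1,\dots,t_0+he_m,t_0-he_1$.

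\paragraph{Correct statement.} The sharp statement is $p<\frac{m(m+1)}{1-\alpha}$, and your Steps 1--3 with the crude bound already prove it. You should drop the shortest-edge refinement. The error does not affect the rest of the paper, because the lemma is only applied to smooth pieces, that is, in the $\alpha=1$ case.
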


\begin{proof}
We first recall a well-known fact from differential geometry  (as a corollary of the Inverse Function Theorem): every $C^{1,\alpha}$ $m$-submanifold of $\R^n$ is locally the graph of a $C^{1,\alpha}$ function. More precisely, given $x\in M$, there exists an isometry $\Phi:\R^n\to \R^n$, a neighborhood $U$ of $\textbf{0}$ in $\R^m$, a radius $R>0$, and functions $f_i:U \to \R$, $i=1,\dots,n-m$ such that
\begin{enumerate}
\item $\Phi(x_0) = \textbf{0}$ and the tangent $m$-plane to $\Phi(M)$ at $\textbf{0}$ is $\R^{m}\times\{0\}^{n-m}$,
\item the function $f: U \to \R^n$ with 
\[ f(t_1,\dots,t_m) = (t_1,\dots,t_m,f_1(t_1,\dots,t_m),\dots,f_{n-m}(t_1,\dots,t_m))\]
maps $U$ onto $B^n(\textbf{0},R)\cap \Phi(M)$,
\item for each $i\in\{1,\dots,n-m\}$, $f_i(\textbf{0})=0$ and $f_i$ has partial derivatives that are $\alpha$-H\"older continuous with some H\"older constant $H$.
\end{enumerate}
It is easy to see that points near $x_0$ have the same property with H\"older constant comparable to $H$. Therefore, by compactness of $M$, we may assume that the constant $H$ as well as the radius $R$ are independent of the point $x_0\in M$.

The latter now implies that all points of $M$ have uniformly small betas. Fix $x\in M$ and $r\in (0,R)$. If $P$ is the $m$-plane in $\R^n$ that is tangent to $M$ at $x$, then for all $y\in M\cap B^n(x,r)$,
\[ \dist(y,P) \leq \sqrt{m}(n-m)H|x-y|^{1+\alpha}\]
which implies that $\beta_M^m(B(x,r)) \lesssim r^{\alpha}$. 

By \cite[Lemma 1.10]{K} we have that if $x_1,\dots,x_{m+2}$ are points on $M$ with $\diam{\Delta(x_1,\dots,x_{m+2})} = d$, then $\mathcal{K}(x_,\dots,x_{m+2}) \lesssim d^{\alpha-1}$. This implies now that if $\alpha=1$, then $\mathcal{K}$ is bounded on $M^{m+2}$.

To finish the proof, assume that $\alpha<1$. Assume also, as we may, that $\diam{M}=1$. Fix $p \in [1,\frac{m(m+1)+1}{1-\alpha})$ and for each $k\in\N$ let 
\[ \mathscr{A}_k = \{(x_1,\dots,x_{m+2})\in M^{m+2} : \diam{\Delta(x_1,\dots,x_{m+2})} \in (2^{-k},2^{1-k}]\}. \]
We have
\begin{align*}
\mathcal{E}_p^{m+2}(M) &= \int_{M^{m+2}}\mathcal{K}(x_1,\dots,x_{m+2})^p\, d\mathcal{H}^{m(m+2)}(x_1,\dots,x_{m+2})\\
&= \sum_{k\in\N} \int_{\mathscr{A}_k}\mathcal{K}(x_1,\dots,x_{m+2})^p\, d\mathcal{H}^{m(m+2)}(x_1,\dots,x_{m+2})\\
&\lesssim \sum_{k\in\N} 2^{kp(1-\alpha)}\mathcal{H}^{m(m+2)}(\mathscr{A}_k).
\end{align*}
Note that if $(x_1,\dots,x_{m+2}) \in \mathscr{A}_k$, then all $x_i$ are in  $B^n(x_1,2^{1-k})$. Therefore, 
\[\mathscr{A}_k \subset \bigcup_{x\in M}\{x\} \times (M\cap B(x,2^{1-k}))^{m+1}\] 
and
\begin{align*}
&\mathcal{E}_p^{m+2}(M)\\ 
&\lesssim  \sum_{k\in\N} 2^{kp(1-\alpha)} \int_{M} \int_{(B^n(x,2^{1-k})\cap M)^{m+1}}\, d\mathcal{H}^{m(m+1)}(x_2,\dots,x_{m+2}) \, d\mathcal{H}^m(x)\\
&\lesssim \sum_{k\in\N} 2^{kp(1-\alpha)} 2^{-km(m+1)}\\
&= \sum_{k\in\N} 2^{k\left(p(1-\alpha)-m(m+1)\right)}\\
&<\infty. \qedhere
\end{align*}

%
%

\end{proof}

We complete this subsection with the following more general version of Theorem \ref{thm:kolasinski2} and its proof.

\begin{theorem}\label{cor:kolasinski}
Let $M$ be a compact LLC $m$-manifold in $\RR^n$. Suppose that 
$$ \mathcal{E}^{m,l}_p(M) < \infty$$
for some $p>ml$.  Then $M$ is a $C^{1,\alpha}$ manifold with $\alpha=\frac{p-ml}{(p+ml)(m+1)}$.
\end{theorem}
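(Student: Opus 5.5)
The plan is to reduce to the results of Kolasi\'nski \cite{K} together with the Reifenberg-type parametrization of David, Kenig, and Toro \cite{DKT}, using Theorem \ref{thm:LLCfine2} to upgrade unilateral flatness to bilateral flatness. We proceed in three steps.

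\textbf{Step 1: decay of the $\beta$-numbers.} We first extract from finiteness of $\mathcal{E}^{m,l}_p(M)$ with $p>ml$ a uniform decay estimate
\[ \beta^m_M(B^n(x,r)) \lesssim r^{\kappa}, \qquad \kappa=\frac{p-ml}{p+ml}, \]
valid for all $x\in M$ and $r$ below some scale $r_0$. This is the content of Kolasi\'nski's energy estimates \cite{K}, and it uses only that $M$ is a compact $m$-dimensional set with Ahlfors-type lower mass bounds. Heuristically the argument runs as follows. If $\beta$ were large at $B(x,r)$, one could find, in a set of $l$-tuples of $\mathcal{H}^{ml}$-measure $\gtrsim r^{ml}$, points for which the supremum over the remaining points makes $\mathcal{K}\gtrsim \beta^{m+1}/r$. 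This would contribute $\gtrsim r^{ml-p}\beta^{p(m+1)}$ to the energy, which must be small because the energy is finite and absolutely continuous. Optimizing this gives the exponent.

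The lower measure bound $\mathcal{H}^m(M\cap B(x,r))\gtrsim r^m$ is available for LLC manifolds, since a flat ball of $M$ projects onto a large piece of $P$, as in the surjectivity step in the proof of Theorem \ref{thm:LLCfine2}. This is precisely where care is needed, because Kolasi\'nski's argument uses the \emph{bilateral} flatness of his ``fine'' sets. I expect this bookkeeping to be the main obstacle. I would first try to run Kolasi\'nski's iteration directly with Theorem \ref{thm:LLCflat2} and Theorem \ref{thm:LLCfine2} supplying the fine-set property at each step.

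\textbf{Step 2: upgrade to $\theta$-decay.} Since $\beta\to 0$ uniformly, for small $r$ we have $\beta^m_M(B(x,2r))\le\delta$. Theorem \ref{thm:LLCfine2} then gives
\[ \theta^m_M(B(x,r))\le 4\beta^m_M(B(x,2r))\lesssim r^{\kappa}. \]
Thus $M$ is Reifenberg flat with H\"older decay of its $\theta$-numbers.

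\textbf{Step 3: regularity from $\theta$-decay.} A closed set whose $\theta$-numbers decay like $r^\kappa$ is locally a $C^{1,\kappa'}$ graph. Here the approximating planes $P_{x,r}$ at consecutive dyadic scales differ in angle by $\lesssim r^\kappa$, so they converge to tangent planes $T_x$ with $\angle(T_x,P_{x,r})\lesssim r^\kappa$, and $T_x$ is H\"older in $x$. Combined with the Reifenberg parametrization of \cite{DKT}, this yields that $M$ is a $C^{1,\alpha}$ embedded manifold. The exponent is degraded from $\kappa$ to $\alpha=\kappa/(m+1)$: I expect this loss either to come from converting the $\mathcal{K}$-lower bound, which is of order $\beta^{m+1}$, into $\beta$ in Step 1, or to be inherited directly from Kolasi\'nski's theorem, whose stated exponent we would quote.

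Finally, the sphere case of Theorem \ref{thm:kolasinski2} follows from the case $l=m+2$. A compact $C^{1}$ submanifold that is a topological sphere is bi-Lipschitz to $\mathbb{S}^m$, by smoothing and using the fact that homeomorphic $C^1$ spheres are bi-Lipschitz equivalent via local graph charts.
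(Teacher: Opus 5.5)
Your three steps follow the paper's route: $\beta$-decay from \cite[Proposition 1]{K}, then Theorem \ref{thm:LLCfine} to get bilateral flatness, then \cite[Proposition 9.1]{DKT}. However, the hypothesis that makes Step 1 run is never established. The decay estimate needs the lower mass bound $\mathcal{H}^m(M\cap B^n(y,\rho))\gtrsim\rho^m$ at \emph{every} $y\in M$ and every small $\rho$, with no flatness assumed. To see why, suppose $\beta^m_M(B^n(x,r))\geq\beta_0$. A greedy choice of vertices in $M\cap B^n(x,r)$ gives a simplex with $\mathcal{K}\gtrsim\eta/r$, where $\eta\simeq\beta_0^{m+1}$. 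This lower bound survives moving the $l$ integrated vertices inside balls $B^n(y_i,c\eta r)$ with $y_i\in M$. To get energy out of this, those small balls must carry measure $\gtrsim(\eta r)^m$, and they sit at points and scales about which nothing is known. Your justification is that a flat ball projects onto a large piece of $P$, as in the surjectivity step of Theorem \ref{thm:LLCfine2}. That only applies to balls with $\beta^m_M(B^n(y,2\rho))<\delta$, i.e.\ exactly when the flatness Step 1 is meant to produce is already known. So the argument is circular, and a bootstrap across scales has no base case, since no scale is known to be flat a priori. The paper breaks the circle with a genuinely topological input: Kinneberg's lower bound \cite[Corollary 1.4]{Kinneberg} on the $m$-measure of balls in LLC $m$-manifolds. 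You need that result, or an equivalent quantitative-topology argument (for instance, Federer--Fleming pushing into an $(m-1)$-skeleton combined with the retraction of Lemma \ref{lem:retraction} and the local homology of $M$). The flat-ball projection cannot replace it.

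Two further corrections. First, the obstacle you anticipate is not there. In the paper, \cite[Proposition 1]{K} is applied with constants depending only on the energy bound and the lower-regularity constant, and bilateral flatness enters only in Step 2. Moreover, Theorem \ref{thm:LLCfine} gives the fine property (with the factor $2$) at all scales $r<R'$ unconditionally: if $\beta^m_M(B^n(x,2r))\geq\delta$, then trivially $\theta^m_M(B^n(x,r))\leq 1\leq\delta^{-1}\beta^m_M(B^n(x,2r))$. So nothing has to be re-run inside Kolasi\'nski's iteration. Second, the exponent. The computation above gives $\eta^{p+ml}r^{ml-p}\lesssim\mathcal{E}^{m,l}_p(M)$, i.e.\ $\beta^{m+1}\lesssim r^{\kappa}$. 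Hence the decay in Steps 1--2 is $r^{\alpha}$ with $\alpha=\kappa/(m+1)$, not $r^{\kappa}$ as you state. Your heuristic, which uses a set of $l$-tuples of measure $\gtrsim r^{ml}$, would give yet another exponent, $(p-ml)/(p(m+1))$. Step 3 then loses nothing: vanishing Reifenberg flatness together with $\beta^m_M(B^n(x,r))\lesssim r^{\alpha}$ gives $C^{1,\alpha}$ by \cite{DKT}. Your first guess about where the factor $m+1$ comes from is the correct one.
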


To prove Theorem \ref{cor:kolasinski}, we would like to simply apply \cite[Theorem 1]{K}. Unfortunately, the assumptions of that theorem do not quite match up with the conclusion of Theorem \ref{thm:LLCfine}, primarily because of the factor of $2$ multiplying the radius of the ball on the right hand side of \eqref{eq:fine}. However, the arguments are essentially unchanged by this factor, as we now sketch.

\begin{proof}[Proof of Theorem \ref{cor:kolasinski}]
Let $M$ be a compact LLC $m$-manifold in $\mathbb{R}^n$, and assume that $\mathcal{E}^{m,l}_p(M) \leq E <\infty$ for some $l\in\{1, \dots, m+2\}$ and $p>ml$. We first observe that $M$ satisfies the following lower Ahlfors regularity bound: There is a constant $C>0$ such that
$$ \mathcal{H}^m(M\cap B^n(x,r)) \geq Cr^m, \quad\text{for all } x\in M, \, 0<r\leq \diam(M).$$
This follows from a result of Kinneberg \cite[Corollary 1.4]{Kinneberg}. 

The rest of the argument proceeds as in the proof of \cite[Theorem 1]{K}. First, \cite[Proposition 1]{K} shows that there exists $C_0 = C_0(E,C,m,l,p)$ such that 
\begin{equation}\label{eq:betadecay}
 \beta_M^m(B^n(x,r)) \leq C_0 r^{\alpha},\quad\text{for all } x\in M, \, 0<r\leq \diam(M),
 \end{equation}
where $\alpha = \frac{p-ml}{(p+ml)(m+1)}$. By Theorem \ref{thm:LLCfine}, it follows that $M$ is Reifenberg flat with vanishing constant. Applying a result of David--Kenig--Toro (see \cite[Proposition 9.1]{DKT} or \cite[Proposition 1.3]{K}), we obtain that $M$ is a $C^{1,\alpha}$ submanifold.
\end{proof}

\section{Proof of Theorem \ref{thm:ex}}\label{sec:ex}


We use the following symbolic notation. For each integer $m\geq 0$ let $\{1,2\}^m$ be all words formed from $\{1,2\}$ of length $m$ with the convention $\{1,2\}^0 = \{\varepsilon\}$ where $\varepsilon$ denotes the empty word. We also set $\{1,2\}^*=\bigcup_{m\geq 0}\{1,2\}^m$. If $w\in \{1,2\}^*$, then we set $|w|$ to be the number of letters that $w$ has with the understanding that $|\varepsilon|=0$. If $w\in\{1,2\}^* \setminus\{\varepsilon\}$ then there exists unique $u\in\{1,2\}^*$ and $i\in 1,2$ such that $w=ui$; we denote this $u$ by $w^{\uparrow}$ and we call it the parent of $w$. Finally, we denote by $\{1,2\}^{\N}$ the set of infinite words. Naturally, if $w\in\{1,2\}^{\N}$, we define $|w| = \infty$.

Here and for the rest of this section, given two integers $1\leq k< N$, we conflate $\R^{k}$ with the $k$-plane $\R^k \times \{0\}^{N-k}$ in $\R^{N}$. Similarly, sets $A\subset \R^k$ are conflated with sets $A\times\{0\}^{N-k}$ in $\R^{N}$.

In \textsection\ref{subsec:constr} we construct the spheres $\Sigma_n$ and in \textsection\ref{subsec:estimates} we show that the energies $\mathcal{E}_p^{n+2}(\Sigma_n)$ are finite.

\subsection{Construction of $\Sigma_n$}\label{subsec:constr}
Fix $n\geq 3$. The construction of $\Sigma_n$ is essentially given in \cite[Section 2]{PV} but we recall it here.

Let $\psi: (\S^1)^{n-2} \to (\S^1)^{n-2}$ be the cyclic permutation
\[ \psi(x_1,x_2,\dots,x_{n-2}) = (x_{n-2},x_1,\dots,x_{n-3}) \]
with the understanding that $\psi$ is the identity on $\S^1$ when $n=3$.

Let $F:\mathbb{B}^2 \times\S^1 \to \R^3$ be a smooth embedding and let $t_{\varepsilon}$ be the image of $F$. For $i\in \{1,2\}$, let $t_1,t_2$ be two smooth solid tori (i.e., diffeomorphic to $t_{\varepsilon}$) contained in the interior of $t_{\varepsilon}$ linked with each other, forming the Bing's double \cite{Bing}; see Figure \ref{fig:bing}. Let also $\varphi_i : t_{\varepsilon} \to t_{i}$ for $i=1,2$ be two diffeomorphisms. For $i=1,2$ let 
\[ \phi_i : \mathbb{B}^2 \times\S^1 \to \mathbb{B}^2 \times\S^1 \quad\text{by}\quad \phi_i = F^{-1}\circ \varphi_i\circ F \] 
and
\[\Phi_i : \mathbb{B}^2\times (\S^1)^{n-2} \to \mathbb{B}^2\times (\S^1)^{n-2}\qquad\text{by}\quad \Phi_i = (\phi_i\times \text{id}_{(\S^1)^{n-3}})\circ(\text{id}_{\mathbb{B}^2}\times\psi).\] 
\begin{figure}
\includegraphics[width=0.5\textwidth]{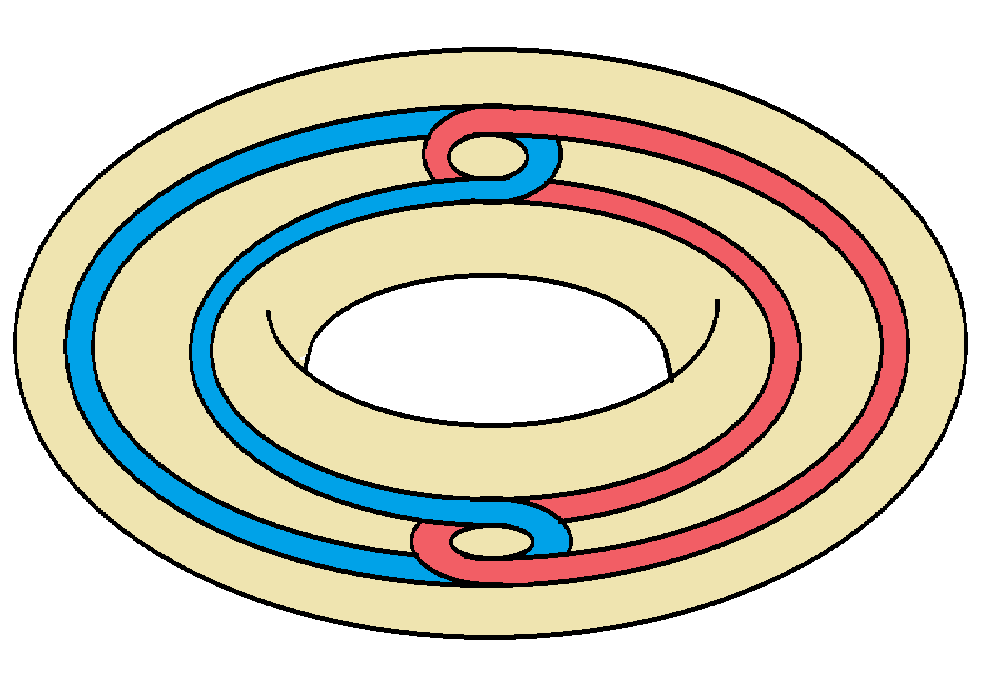}
\caption{The solid tori $t_1,t_2$ (in blue and red) linked inside $t_{\varepsilon}$.}\label{fig:bing}
\end{figure}
Define
\[ \T_{\varepsilon} = \mathbb{B}^2\times (\S^1)^{n-2}\]
and for $w = i_1\cdots i_k\in \{1,2\}^k$ define
\[ \Phi_w = \Phi_{i_1}\circ\cdots\circ\Phi_{i_k}\qquad\text{and}\qquad \mathbb{T}_w = \Phi_w(\mathbb{T}_{\varepsilon}).\]

Following \cite{Blankinship}, there exists a smooth embedding 
\[ \vartheta : \T_{\varepsilon} \to \mathbb{B}^n \subset \R^{n}\] 
for which there exists $x_0 \in \S^1$ such that
\begin{enumerate}
\item $\vartheta(\mathbb{B}^2\times\S^1\times\{x_0\}^{n-3}) \subset \R^3\times\{0\}^{n-3}$,
\item $\vartheta\left( \phi_i(\mathbb{B}^2\times\S^1) \times\{x_0\}^{n-3} \right) \subset \R^3\times\{0\}^{n-3}$ for $i\in \{1,2\}$,
\item $\vartheta\circ(\text{id}_{\mathbb{B}^2}\times\psi^k) = (\text{id}_{\R^2}\times\psi')^k \circ \vartheta$ for all $k\in\N$ where $\psi' : \R^{n-2}\to\R^{n-2}$ is the cyclic permutation
\[ \psi'(x_1,x_2,\dots,x_{n-2}) = (x_{n-2},x_1,\dots,x_{n-3}). \]
\end{enumerate} 
For each $w\in \{1,2\}^*$, let $T_w = \vartheta(\T_w)$. For each $i\in \{1,2\}$, let $\tilde{\Phi}_i : T_{\varepsilon}\to T_{i}$ be the diffeomorphism given by 
\[ \tilde{\Phi}_i = \vartheta\circ \Phi_i \circ \vartheta^{-1}.\]
If $w=i_1\cdots i_k \in \{1,2\}^k$ and $k\in\N$, then
\[ \tilde{\Phi}_w := \tilde{\Phi}_{i_1}\circ \cdots \circ \tilde{\Phi}_{i_k} : T_{\varepsilon} \to T_w\]
is a diffeomorphism that maps $T_{\varepsilon}$ onto $T_w$.

Set $B= B^n({\bf 0},5) \subset \R^{n}$ and recall that $T_{\varepsilon}\subset B$ and that $\dist(T_{\varepsilon},\R^n\setminus B)\geq 4$. Set 
\[ U:=B\times [-4,4] \subset \R^{n+1}\] 
and define the cylinders
\[ C_w = T_w \times [-2^{-|w|},2^{-|w|}] \subset \R^{n+1}, \qquad \text{for $w\in \{1,2\}^*$}.\]

For $i\in \{1,2\}$, let
\[g_1(x) = (1/2)x + b_1, \quad g_2(x)=(1/2)x + b_2\]
be two similarities of $\R^{n+1}$ such that $g_i(T_{\varepsilon}) \subset\R^n\times\{\frac12\}$ and the images $g_1(T)$ and $g_2(T)$ are pairwise disjoint. If $w=i_1\cdots i_k \in \{1,2\}^k$, then we denote
\[ g_w = g_{i_1}\circ \cdots \circ g_{i_k}.\]

\begin{lemma}\label{lem:Gk}
There exists a family of diffeomorphisms $\{G_k : U \to U\}_{k\in\N}$ such that for each $k\in\N$,
\begin{enumerate}[(a)]
\item $G_k$ is the identity on a neighborhood of $\partial T_{\varepsilon}$ and on $U\setminus T_{\varepsilon}$,
\item 
\[ G_{k+1} | \left(U \setminus \bigcup_{w\in \{1,2\}^{k}}C_w\right)  = G_{k} | \left(U \setminus \bigcup_{w\in \{1,2\}^{k}}C_w\right),  \]
\item if $w\in \{1,2\}^k$, then $G_{k}(C_w) = g_w(C_{\varepsilon})$,

\item if $w \in \{1,2\}^*$ with $|w| \leq k-1$, then $G_k(T_w \setminus (T_{w1}\cup T_{w2}))$ is a rescaled copy of $G_1(T_{\varepsilon} \setminus (T_{1}\cup T_{2}))$ by a factor of $2^{-|w|}$.
\end{enumerate}
\end{lemma}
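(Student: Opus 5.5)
We will build the maps $G_k$ inductively, with $G_1$ constructed by hand and each later $G_{k+1}$ obtained from $G_k$ by modifying it only inside the cylinders $C_w$, $|w|=k$. The modification is a conjugated rescaled copy of $G_1$.

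\emph{Construction of $G_1$.} We need a diffeomorphism $G_1\colon U\to U$ with three properties: it is the identity near $\partial T_\varepsilon$ and on $U\setminus T_\varepsilon$, and it satisfies $G_1(C_i)=g_i(C_\varepsilon)$ for $i=1,2$.

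First recall two facts. The cylinders $C_1,C_2$ are disjoint, since $T_1\cap T_2=\emptyset$, and each is a smoothly embedded copy of $\mathbb{B}^2\times(\S^1)^{n-2}\times[-1,1]$ lying in the interior of $T_\varepsilon\times(-1,1)$. The targets $g_i(C_\varepsilon)$ are disjoint and unlinked, since they are translated half-size copies of $C_\varepsilon$ sitting around height $1/2$. The key point is that in $\R^{n+1}$ the extra dimension lets us unlink $C_1$ and $C_2$.

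Step 1: isotope $C_1$ vertically, via a compactly supported flow in the $x_{n+1}$-direction, until $C_1$ and $C_2$ lie at different heights.

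Step 2: isotope each cylinder independently inside $\operatorname{int}(T_\varepsilon)\times\R$ to the desired standard position, using the isotopy extension theorem. The two cylinders are now separated in height, so their isotopies do not interfere.

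Step 3: the isotopy lemma and the isotopy extension theorem give an ambient diffeomorphism, compactly supported in the interior of $T_\varepsilon\times[-4,4]$, carrying $C_i$ onto $g_i(C_\varepsilon)$.

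One concern is that $g_i(C_\varepsilon)$ need not sit inside $T_\varepsilon\times\R$. We handle this by taking the $b_i$ (which the paper leaves implicit) so that $g_i(T_\varepsilon)\subset \operatorname{int} T_\varepsilon$. For the map $C_i\to g_i(C_\varepsilon)$ itself we choose $g_i\circ(\tilde\Phi_i^{-1}\times \cdot 2)$, so that $G_1$ restricted to $C_i$ is a known map. Reading (a) as requiring the support to lie in $T_\varepsilon\times(-4,4)$, this gives $G_1$.

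\emph{Inductive step.} Given $G_k$ and $w\in\{1,2\}^k$, set $H_w:=\bigl(\tilde\Phi_w\times 2^{-k}\bigr)\colon C_\varepsilon\to C_w$, so that $G_k|_{C_w}=g_w\circ H_w^{-1}$; this is the normalization maintained by the induction. Define
\[
G_{k+1} := G_k \text{ outside } \textstyle\bigcup_{|w|=k} C_w ,\qquad G_{k+1}|_{C_w}:= g_w\circ G_1\circ H_w^{-1}.
\]
On a neighborhood of $\partial C_w$, $G_1$ is the identity, so the two definitions agree there and $G_{k+1}$ is a diffeomorphism. This requires $G_1$ to be the identity near $\partial C_\varepsilon$, including near the top and bottom faces, which we arrange by taking its support inside $\operatorname{int} C_\varepsilon$. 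Property (b) holds by definition.

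Property (c) at level $k+1$: for $w'=wi$ we have $H_w^{-1}(C_{wi})=C_i$, because $\tilde\Phi_w$ maps $T_i$ to $T_{wi}$ and the heights scale by $2^{-k}$. Hence $G_{k+1}(C_{wi})=g_w(G_1(C_i))=g_wg_i(C_\varepsilon)$, and $G_{k+1}|_{C_{wi}}=g_{wi}\circ H_{wi}^{-1}$ by the choice of $G_1|_{C_i}$, which preserves the normalization.

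Property (d): on $T_w\setminus(T_{w1}\cup T_{w2})$, $G_{k+1}$ equals $g_w\circ G_1\circ H_w^{-1}$, and $g_w$ is a similarity of ratio $2^{-|w|}$. So the image is $g_w$ applied to $G_1(T_\varepsilon\setminus(T_1\cup T_2))$, as required.

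\emph{Main obstacle.} The delicate part is the base construction: unlinking the Bing-type linked pair $T_1,T_2$ inside $T_\varepsilon\times\R$ by an ambient isotopy that is compactly supported away from $\partial C_\varepsilon$ and ends in a prescribed map on $C_i$. The height-separation trick in Step 1 is what makes this possible. The remaining Steps 2 and 3 are standard differential topology, which we expect to take from \cite{PV}.
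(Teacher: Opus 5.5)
Your inductive step is the paper's: your formula for $G_{k+1}|_{C_w}$ is exactly \eqref{eq:Gk+1}. Making explicit the normalization $G_k|_{C_w}=g_w\circ H_w^{-1}$, and the requirement that $G_1$ be the identity near all of $\partial C_\varepsilon$, is precisely what the gluing needs. The gap is in the base case, in Step 2, which you treat as standard.

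To begin with, the targets do not fit where you put the support. With the paper's normalization $g_i(T_\varepsilon)\subset\R^n\times\{\frac12\}$ one has $g_i(C_\varepsilon)=g_i(T_\varepsilon)\times[0,1]$, which reaches height $1$. A diffeomorphism supported in $\operatorname{int}C_\varepsilon$ maps $C_i\subset\operatorname{int}C_\varepsilon$ into $\operatorname{int}C_\varepsilon$, so $G_1(C_i)=g_i(C_\varepsilon)$ is impossible unless you change the vertical offset. Similarly, $g_i(T_\varepsilon)\subset\operatorname{int}T_\varepsilon$ cannot in general be obtained just by choosing $b_i$, because $T_\varepsilon=\vartheta(\T_\varepsilon)$ is fixed beforehand. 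For instance, a solid torus of revolution in $\R^3$ contains no translate of its half-size copy.

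More seriously, even when the targets fit, the isotopy you ask for can be obstructed. $G_1$ is the identity near $\partial C_\varepsilon$, and every loop in $C_\varepsilon$ can be pushed vertically into the face $T_\varepsilon\times\{1\}$, so $G_1$ acts trivially on $\pi_1(C_\varepsilon)\cong\mathbb{Z}^{n-2}$. Combined with $G_1|_{C_i}=g_i\circ H_i^{-1}$, this gives $g_i\circ\sigma\simeq H_i\circ\sigma$ in $C_\varepsilon$ for every loop $\sigma$ in $C_\varepsilon$. That is, $g_i\colon C_\varepsilon\to C_\varepsilon$ must induce $(\Phi_i)_*$, which sends $e_j\mapsto e_{j+1}$ for $j\le n-3$ and $e_{n-2}\mapsto 0$. (The last circle is moved by $\psi$ to the first slot and then into the Bing component $t_i$, which is null-homotopic in $t_\varepsilon$.) For $n\ge 4$ the homothety $g_i$ must therefore carry a loop representing $e_1$ to one representing $e_2$.

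Your height separation removes the linking of $C_1$ with $C_2$, but it says nothing about how each cylinder individually sits in $C_\varepsilon$. So Steps 2--3 as written do not produce $G_1$.

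The paper does not attempt an isotopy argument here. It takes $G_1$ from Semmes' reembedding \cite[Definition 3.2]{Semmes1996} of the $3$-dimensional Bing slice, which is built to send $\tau_i$ onto $g_i(\tau_\varepsilon)$, extended by Semmes' isotopy extension lemma as in \cite[Proposition 3.1]{PV}. Whichever source you use, these compatibilities are forced by iterating on the product cylinders $C_w$, a scheme you and the paper share. They must therefore be built into the choice of $T_\varepsilon$, the heights, and the similarities $g_i$; the unlinking trick alone cannot supply them.
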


\begin{proof}
We start by constructing $G_1$. We give only a sketch of the construction; the ideas are similar to those in \cite[Proposition 3.1]{PV}. Recall that we have assumed that there exists $x_0 \in \S^1$ for which
\[ \tau_{\varepsilon}:= \vartheta(\mathbb{B}^2\times\S^1\times\{x_0\}^{n-3})\]
is a $3$-tube in $\R^3$ with smooth boundary and
\[ \tau_i := \vartheta \circ \tilde{\Phi}_i(\mathbb{B}^2\times\S^1\times\{x_0\}^{n-3}), \qquad\text{for $i=1,2$} \]
are $3$-tubes in $\tau_{\varepsilon}$ with smooth boundary linked to each other as in Figure \ref{fig:bing}. Let now
\[ \theta = (\theta_1,\theta_2,\theta_3,\theta_4) : C_{\varepsilon} \cap \R^3 \to \R^4\]
be Semmes' reembedding (see \cite[Definition 3.2]{Semmes1996}) which has the following properties:
\begin{enumerate}
\item there exists a neighborhood $V$ of $\partial C_{\varepsilon}$ such that $\theta$ is the identity on $V\cap C_{\varepsilon}\cap\R^3$,
\item $\theta$ unlinks $\tau_1$ and $\tau_2$ in $\R^4$ and maps them onto $g_1(\tau_{\e})$ and $g_2(\tau_{\e})$, respectively,
\item $\theta(C_{\varepsilon}\cap \R^{3}) \subset \R^3\times[0,\frac12]$. 
\end{enumerate} 
The diffeomorphism $G_1$ is now obtained by extending the embedding 
\begin{equation}\label{eq:G1}
C_{\varepsilon}\cap \R^3 \to \R^{n+1}, \qquad x\mapsto (\theta_1(x),\theta_2(x),\theta_3(x),x_4,\dots,x_n,\theta_4(x)) 
\end{equation}
using Semmes' isotopy extension lemma \cite[Lemma 4.1]{Semmes1996}. Property (a) follows from (1) and \eqref{eq:G1}, while (c) follows from (2) and \eqref{eq:G1}. Properties (b) and (d) are vacuous.

We proceed iteratively. Assume that for some $k\in\N$ we have defined a diffeomorphism $G_{k}: U \to U$ satisfying properties (a)--(d) of the lemma. By the induction hypothesis, if $w\in \{1,2\}^k$, then $g_w(C_{\varepsilon})=G_k(C_w)$. To obtain $G_{k+1}$, we change the definition of $G_k$ on each $C_w$ (with $w\in\{1,2\}^k$) replacing $G_k(C_w)$ by $g_w(G_1(C_{\varepsilon}))$. More specifically, define $G_{k+1}: U \to U$ by
\[ G_{k+1} | \left(U \setminus \bigcup_{w\in \{1,2\}^{k}}C_w\right)  = G_{k} | \left(U \setminus \bigcup_{w\in \{1,2\}^{k}}C_w\right)\] 
and for $w\in\{1,2\}^k$ we define $G_{k+1}$ on $C_w$ by
\begin{equation}\label{eq:Gk+1}
G_{k+1}(x,t) =  g_w\circ G_1(\tilde{\Phi}_w^{-1}(x),2^kt), \quad\text{for $x\in T_{w}$ and $t\in [-2^{-k},2^{-k}]$}. 
\end{equation} 
Recall that the map if $w\in\{1,2\}^k$, then the map $(x,t) \mapsto (\tilde{\Phi}_w^{-1}(x),2^kt)$ is a diffeomorphism of $C_w$ onto $C_{\varepsilon}$.

Property (b) is clear by design of $G_{k+1}$ while property (a) follows from property (b) and the inductive hypothesis. Property (c) follows from \eqref{eq:Gk+1}. Fix now $w \in \{1,2\}^*$ with $|w| \leq k$. If $|w| \leq k-1$, then we get (d) from (b) and the inductive hypothesis. If $|w|=k$, then we get (d) from (c). 
\end{proof}

By Lemma \ref{lem:Gk}, if $w\in \{1,2\}^*$ and $k,n \in\N$ with $\min\{k,n\}\geq |w|+1$, then
\[ G_k(T_w\setminus (T_{w1}\cup T_{w2})) = G_n(T_w\setminus (T_{w1}\cup T_{w2})). \]

Denote by ${\bf 0}$ the origin in $\R^{n+1}$ and for each $m\in\N$ let $P_m=(\tfrac32 2^{-m-1},0,\dots,0)\in\R^{n+1}$ and 
\begin{align*}
B_m &= B^{n+1}({\bf 0}, 2^{-m}), \\
B_m^* &= B^{n+1}({\bf 0},\tfrac{15}{16}2^{-m}),\\
B_m' &= B^{n+1}(P_m, 2^{-m-3}), \\
B_m'' &= B^{n+1}(P_m, 2^{-m-4}),\\
\tilde{B}_m &= B^{n+1}(P_m, 2^{-m^2-5}).
\end{align*}
Also define for each $m\in\N$ the similarity
\[ \zeta_m : \R^{n+1} \to\R^{n+1} \quad \text{by}\quad \zeta_{m}(x) = \frac{2^{-m^2-5}}{10}x + P_m,\]
which maps the $n$-dimensional unit ball $\mathbb{B}^n$ onto $\tilde{B}_m\cap (\R^n\times\{0\})$.

\begin{figure}
    \centering
    \includegraphics[scale=0.6]{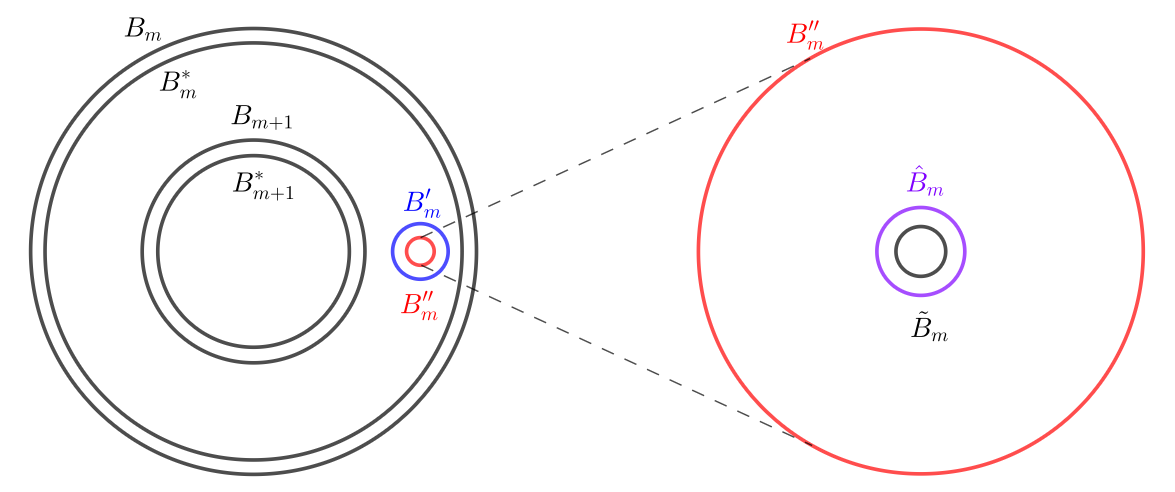}
    \caption{Balls $B_m$, $B_m^*$, $B_m'$, $B_m''$, $\tilde{B}_m$, and $\hat{B}_m$.}
\end{figure}

Define
\[ \Sigma_n' =  \left( B \setminus \bigcup_{m=1}^{\infty}B_m^*\right) \cup \left(\bigcup_{m=1}^{\infty}\zeta_m\circ G_m(B)\right).\]

By \cite[Proposition 3.2]{PV}, $\Sigma_n'$ is homeomorphic to the ball $\mathbb{B}^n$ and smooth outside of the origin ${\bf 0}$. By \cite[Corollary 3.6]{PV}, $\Sigma_n'$ is LLC and Ahlfors $n$-regular. Gluing $\Sigma_n'$ to a sphere without a disk in a smooth fashion, we may assume for the rest that $\Sigma_n'$ is contained in an $n$-sphere $\Sigma_n$ which is LLC, $n$-regular and smooth outside of ${\bf 0}$. By \cite[\textsection7.2]{PV}, $\Sigma_n'$ does not admit a quasisymmetric parametrization by the unit ball $\mathbb{B}^n$ so $\Sigma_n$ is not quasisymmetrically equivalent to $\S^n$. 



\subsection{Energy estimates}\label{subsec:estimates}

To finish the proof of Theorem \ref{thm:ex} it remains to establish the next proposition.

\begin{proposition}\label{prop:estimates}
For each $p\in (1,n(n+2))$ we have $\mathcal{E}_p^{n+2}(\Sigma_n)< \infty$.
\end{proposition}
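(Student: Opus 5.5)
The plan is to decompose $\Sigma_n^{n+2}$ according to where the singular point ${\bf 0}$ sits relative to the simplex, and to use scale invariance of the construction together with the scaling law of Lemma \ref{lem:mengerscale}. Away from ${\bf 0}$ the sphere $\Sigma_n$ is smooth, so by Lemma \ref{lem:C2} (with $\alpha=1$) $\mathcal{K}$ is bounded on $(\Sigma_n\setminus B_{m_0})^{n+2}$ for every fixed $m_0$. The remaining energy comes from tuples with at least one point near ${\bf 0}$. Since $\mathcal{K}\le 1$ always (the $(n+1)$-volume of a simplex is at most a constant times $\diam^{n+1}$, and we only need $\mathcal{K}\lesssim \diam^{-1}$), the elementary bound
\[\mathcal{K}(x_0,\dots,x_{n+1})\lesssim \frac{1}{\diam\Delta(x_0,\dots,x_{n+1})}\]
holds for all tuples. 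The point is to show that the energy contributed at scale $2^{-m}$ is summable in $m$ exactly when $p<n(n+2)$.

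The key steps are as follows. First, partition tuples by the scale $d=\diam\Delta\in(2^{-k},2^{1-k}]$ and by the annulus $B_m\setminus B_{m+1}$ containing the point closest to ${\bf 0}$; call this distance $\rho\approx 2^{-m}$. Second, handle the case $d\lesssim \rho$, so that the whole simplex lies in a region at distance $\approx 2^{-m}$ from ${\bf 0}$. There $\Sigma_n$ is, by Lemma \ref{lem:Gk}(d) and the definition of $\zeta_m$, a union of the flat piece $B\setminus\bigcup B_m^*$ and one rescaled copy $\zeta_m\circ G_m(B)$. The copy $\zeta_m\circ G_m(B)$ is itself a self-similar object: its level-$j$ pieces $G_m(T_w\setminus(T_{w1}\cup T_{w2}))$ with $|w|=j$ are $2^{-j}$-rescalings of one fixed smooth piece, scaled further by $2^{-m^2-5}/10$, and there are $2^j$ of them, with $j\le m$. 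On each such fixed smooth compact piece the energy is finite, provided the curvature between neighbouring pieces is controlled. By Lemma \ref{lem:mengerscale}, a piece of size $\lambda$ contributes $\lambda^{n(n+2)-p}$ times a fixed constant. Summing $2^j(2^{-j}\lambda_m)^{n(n+2)-p}$ over $j\le m$ and then over $m$ converges, since $n(n+2)-p>0$ and $n(n+2)-p$ beats the factor $2^j$ because $p<n(n+2)$ forces the exponent to exceed $0$. Note that we need $2^{j(1-(n(n+2)-p))}$ summable. If it is not, the sum up to $m$ is bounded by $2^m$, which is absorbed by $\lambda_m^{n(n+2)-p}=2^{-(m^2+5)(n(n+2)-p)}$; this is exactly why the gadgets are placed at the superexponentially small scale $2^{-m^2}$. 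Third, handle the case $d\gtrsim\rho$, meaning simplices that reach ${\bf 0}$ or straddle many scales. Here we use the crude bound $\mathcal{K}^p\lesssim d^{-p}$ and Ahlfors $n$-regularity of $\Sigma_n$. The measure of tuples with diameter $\approx 2^{-k}$ and containing a point within $2^{-k}$ of ${\bf 0}$ is at most $\lesssim (2^{-k})^{n}\cdot(2^{-k})^{n(n+1)}=2^{-kn(n+2)}$, so this case contributes $\sum_k 2^{k(p-n(n+2))}<\infty$ precisely when $p<n(n+2)$.

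The main obstacle is the second step. One must justify that, within a single ring, tuples meeting two different gadget pieces or meeting the flat part and a gadget are still controlled. For these I would again use the crude bound $d^{-p}$ at the scale of separation together with Ahlfors regularity, which reduces to a geometric sum of the same type as in the third step. The first step, the uniform bound on one model piece, follows from smoothness of $G_1(T_\varepsilon\setminus(T_1\cup T_2))$ together with its smooth matching to neighbours given by Lemma \ref{lem:Gk}(a), and then Lemma \ref{lem:C2}.
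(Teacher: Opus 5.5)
Your proposal is correct and follows essentially the same route as the paper. The paper also splits $\Sigma_n^{n+2}$ into four kinds of pieces:
\begin{itemize}
\item a far smooth part, handled by Lemma \ref{lem:C2};
\item flat parts, where $\mathcal{K}=0$;
\item straddling tuples ($\mathscr{B}_0,\mathscr{B}_m,\mathscr{C}_m,\mathscr{E}_m,\mathscr{F}_{m,0}',\mathscr{F}_{m,w}'$), controlled by $\mathcal{K}\lesssim 1/\diam\Delta$ together with Ahlfors regularity;
\item gadget level pieces together with their children, handled by Lemma \ref{lem:mengerscale}.
\end{itemize}
The key sum is the same as yours, $\sum_m\sum_{i<m}2^i(2^{-i}2^{-m^2-5})^{n(n+2)-p}$, and it converges only because of the superexponential scale.

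Two small slips to fix in a write-up. First, $\mathcal{K}\le 1$ is false; only $\mathcal{K}\lesssim 1/\diam\Delta$ holds, which is all you use. Second, the tuples straddling different levels inside a gadget carry a multiplicity $2^j$ at scale $2^{-j}2^{-m^2-5}$. Their sum is therefore of your step-2 type, which again needs the $2^{-m^2}$ scale, not of the step-3 type.
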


The proof of Proposition \ref{prop:estimates} occupies the rest of this section. To this end, fix $p\in (1,n(n+2))$. To simplify the notation, we write $\Sigma=\Sigma_n$. Below, given positive quantities $A,B>0$, we write $A \lesssim B$ if there exists a constant $C\geq 1$ that depends at most on $n$, $p$, and the Ahlfors regularity constant for $\mathcal{H}^n$, such that $A\leq C B$. We also write $A\simeq B$ is $A \lesssim B$ and $A\gtrsim B$. 

We start by splitting the set $\Sigma^{n+2}$ into several subsets. Let
\begin{align*}
\mathscr{A}_0 &:= (\Sigma\setminus (B_2 \cup B_1''))^{n+2}\\
\mathscr{B}_0 &:= \{ (x_1,\dots,x_{n+2}) \in \Sigma^{n+2} : \text{for some distinct $i,j$},\\
&\qquad\qquad\qquad \text{$x_i \in B_2\cup B_1''$ and $x_j \in \Sigma\setminus B_1$}\}
\end{align*}
and for each $m\in\N$ let
\begin{align*}
\mathscr{A}_m &:= ((\Sigma\cap B_m)\setminus (B_{m+1}^*\cup B_m''))^{n+2}\\
\mathscr{B}_m &:= \{ (x_1,\dots,x_{n+2}) \in (\Sigma\cap B_m)^{n+2} : \text{for some distinct $i,j$},\\
&\qquad\qquad \text{$x_i \in B_{m+1}^* $ and $x_j \in B_m\setminus B_{m+1}$}\}\\
\mathscr{C}_m &:= \{ (x_1,\dots,x_{n+2}) \in (\Sigma\cap B_m)^{n+2} : \text{for some distinct $i,j$},\\
&\qquad\qquad \text{$x_i \in B_{m}''$ and $x_j \in B_m\setminus B_{m}'$}\}
\end{align*}

\begin{lemma}
We have that 
\begin{equation}\label{eq:partition}
\Sigma^{n+2} = \{{\bf 0}\}^{n+2} \cup (\mathscr{A}_0\cup \mathscr{B}_0) \cup \bigcup_{m\in\N}(\mathscr{A}_m\cup \mathscr{B}_m\cup \mathscr{C}_m)\cup \bigcup_{m\in\N}(\Sigma\cap B_m')^{n+2}.
\end{equation}
\end{lemma}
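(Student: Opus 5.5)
The plan is to prove the covering statement \eqref{eq:partition}; only the inclusion $\subseteq$ needs work, since every set on the right is a subset of $\Sigma^{n+2}$. Fix a tuple $x=(x_1,\dots,x_{n+2})\in\Sigma^{n+2}$ with $x\neq(\mathbf 0,\dots,\mathbf 0)$. We sort $x$ according to the smallest ball $B_m$ containing all of its coordinates. Put $m_0=\max\{m\geq 0: x_i\in B_m \text{ for all } i\}$, with the convention that $B_0$ is all of $\R^{n+1}$. This maximum exists: some $x_i\neq\mathbf 0$, and that $x_i$ lies outside $B_m$ once $2^{-m}<|x_i|$. The argument then splits according to whether $m_0=0$ or $m_0\geq1$.

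\emph{Case $m_0\geq 1$.} Write $m=m_0$. Then all $x_i\in B_m$, and some $x_j\in B_m\setminus B_{m+1}$. If some $x_i$ lies in $B_{m+1}^*$, then $i\neq j$, because $B^*_{m+1}\subset B_{m+1}$, and $x\in\mathscr{B}_m$. So suppose no coordinate lies in $B_{m+1}^*$.
\begin{itemize}
\item If no coordinate lies in $B_m''$ either, then every coordinate lies in $(\Sigma\cap B_m)\setminus(B^*_{m+1}\cup B_m'')$, and $x\in\mathscr{A}_m$.
\item Otherwise some $x_i\in B_m''$. If moreover some $x_k\in B_m\setminus B_m'$, then $k\neq i$ since $B_m''\subset B_m'$, and $x\in\mathscr{C}_m$.
\item If instead every coordinate lies in $B_m'$, then $x\in(\Sigma\cap B_m')^{n+2}$.
\end{itemize}

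\emph{Case $m_0=0$.} Some $x_j\notin B_1$. If some $x_i\in B_2\cup B_1''$, then $i\neq j$: indeed $B_2\subset B_1$, and $B_1''\subset B_1$ because $|P_1|+2^{-5}=\tfrac38+\tfrac1{32}<\tfrac12$. Hence $x\in\mathscr{B}_0$. Otherwise every coordinate lies in $\Sigma\setminus(B_2\cup B_1'')$ and $x\in\mathscr{A}_0$.

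The only point needing care is the verification of the containments among the balls: $B^*_{m+1}\subset B_{m+1}\subset B_m$ and $B_m''\subset B_m'\subset B_m$. The first chain follows from the radii. For the second, we check $|P_m|+2^{-m-3}=\tfrac34 2^{-m}+\tfrac18 2^{-m}<2^{-m}$. These are the facts used to see that the indices $i$ and $j$ (or $i$ and $k$) are distinct. No other obstacle is expected: the decomposition is simply a case analysis on the scale of the tuple and on where its coordinates sit. It is not required to be disjoint, and overlaps between the pieces are harmless for the later energy estimates.
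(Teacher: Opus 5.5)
Your proof is correct and follows essentially the same case analysis as the paper. The paper first writes $\Sigma^{n+2}=\mathscr{A}_0\cup\mathscr{B}_0\cup(\Sigma\cap B_1)^{n+2}$, then splits each $(\Sigma\cap B_m)^{n+2}$ into $\mathscr{A}_m\cup\mathscr{B}_m\cup\mathscr{C}_m\cup(\Sigma\cap B_m')^{n+2}\cup(\Sigma\cap B_{m+1})^{n+2}$, and iterates using $\bigcap_m(\Sigma\cap B_{m+1})^{n+2}=\{\mathbf{0}\}^{n+2}$; you instead pick the maximal scale $m_0$ directly, which is the same argument organized a bit more cleanly, and your explicit ball containments (e.g.\ $B_1''\subset B_1$, $B_m''\subset B_m'$) make the distinctness of the indices visible where the paper leaves it implicit.
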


\begin{proof}
Fix $\textbf{x} =(x_1,\dots,x_{n+2}) \in \Sigma^{n+2}$. If $\textbf{x} \not\in \mathscr{A}_0$, then there exists $i$ such that $x_i \in B_2\cup B_1''$. There are now two possibilities. Either, there exists $j\neq i$ such that $x_j \in \Sigma\setminus B_1$ (in which case $\textbf{x}\in \mathscr{B}_0$), or $x_j \in B_1$ for all $j$ (in which case $\textbf{x}\in (\Sigma\cap B_1)^{n+2}$). This shows that
\begin{equation}\label{eq:partition1}
\Sigma^{n+2} = \mathscr{A}_0 \cup \mathscr{B}_0 \cup (\Sigma\cap B_1)^{n+2}.
\end{equation}

We now show that for all $m\in\N$ 
\begin{equation}\label{eq:partition2} 
(\Sigma\cap B_m)^{n+2} = \mathscr{A}_m \cup \mathscr{B}_m \cup \mathscr{C}_m \cup (\Sigma\cap B_m')^{n+2}\cup (\Sigma\cap B_{m+1})^{n+2}.
\end{equation}
Since $\{{\bf 0}\}^{n+2} = \bigcap_{m\in\N}(\Sigma\cap B_{m+1})^{n+2}$, \eqref{eq:partition1} and \eqref{eq:partition2} give \eqref{eq:partition}. 

Suppose that $\textbf{x} =(x_1,\dots,x_{n+2}) \in (\Sigma\cap B_m)^{n+2}$ for some $m\in\N$. Suppose also that $\textbf{x} \not\in \mathscr{A}_m$. Then there exists $i$ such that $x_i \in B_{m+1}^*\cup B_m''$. There are two cases to consider.

\emph{Case 1.} Suppose that $x_i \in B^*_{m+1}$. If $x_j \in B_{m+1}$ for all $j$, then $\textbf{x}\in (\Sigma\cap B_{m+1})^{n+1}$. On the other hand, if there exists $j\neq i$ such that $x_j \not\in B_{m+1}$, then $\textbf{x} \in \mathscr{B}_m$.

\emph{Case 2.} Suppose that $x_i \in B_{m}''$. If $x_j \in B_{m}'$ for all $j$, then $\textbf{x}\in (\Sigma\cap B_{m}')^{n+1} = \mathscr{C}_m$. On the other hand, if there exists $j\neq i$ such that $x_j \not\in B_{m}'$, then $\textbf{x} \in \mathscr{C}_m$.
\end{proof}

We now apply four estimates. First, $\Sigma\setminus (B_2 \cup B_1'')$ is a $C^2$-manifold, so by Lemma \ref{lem:C2} we have that 
\[ \int_{\mathscr{A}_0} \mathcal{K}(\textbf{x})^p\, d\mathcal{H}^{n(n+2)}(\textbf{x}) = \mathcal{E}_p^{n+2}(\Sigma\setminus (B_2 \cup B_1''))< \infty.\] 

Second, for all $m\in\N$,
\[ (\Sigma\cap B_m)\setminus (B_{m+1}^*\cup B_m'') \subset \R^n\times\{0\}\]
so 
\[ \int_{\mathscr{A}_m} \mathcal{K}(\textbf{x})^p\, d\mathcal{H}^{n(n+2)}(\textbf{x}) = 0.\] 

Third, if $\textbf{x} \in \mathscr{B}_0$, then $\diam{\Delta(\textbf{x})} \simeq 1$ which gives $\mathcal{K}(\textbf{x}) \lesssim (\diam{\Sigma})^{-1} \lesssim 1$.

Therefore, by the $n$-regularity of $\Sigma$ we get
\[ \int_{\mathscr{B}_0} \mathcal{K}(\textbf{x})^p\, d\mathcal{H}^{n(n+2)}(\textbf{x}) \lesssim 1. \]

Fourth, if $\textbf{x} \in \mathscr{B}_m\cup \mathscr{C}_m$, then $\diam{\Delta(\textbf{x})} \simeq 2^{-m}$, which gives $\mathcal{K}(\textbf{x}) \lesssim 2^{m}$. 
Therefore, by the $n$-regularity of $\Sigma$ we get
\begin{align*}
\int_{\mathscr{B}_m\cup\mathscr{C}_m} \mathcal{K}(\textbf{x})^p \, d\mathcal{H}^{n(n+2)}(\textbf{x})&\lesssim 2^{pm}\mathcal{H}^{n(n+2)}((\Sigma\cap B_m)^{n+2})\simeq 2^{-m(n(n+2)-p)}.
\end{align*}

Combining all these estimates, we have
\begin{align*}
\int_{\mathscr{A}_0\cup\mathscr{B}_0\cup\bigcup_{m\in\N}(\mathscr{A}_m\cup\mathscr{B}_m\cup\mathscr{C}_m)} \mathcal{K}(\textbf{x})^p \, d\mathcal{H}^{n(n+2)}(\textbf{x}) < \infty.
\end{align*}

Thus, to complete the proof of Proposition \ref{prop:estimates}, it remains to show that
\begin{equation}\label{eq:Cestimate}
\sum_{m\in\N}\int_{(\Sigma\cap B_m')^{n+2}} \mathcal{K}(\textbf{x})^p \, d\mathcal{H}^{n(n+2)}(\textbf{x}) < \infty.
\end{equation}
To do that, we further subdivide each $(\Sigma\cap B_m')^{n+2}$ as follows. For each $m\in\N$, let $\hat{B}_m = B^{n+1}(P_m,2^{-m^2-4})$ and let
\begin{align*}
\mathscr{D}_m &:= (\Sigma\cap (B_m'\setminus \hat{B}_m))^{n+2}\\
\mathscr{E}_m &:= \{(x_1,\dots,x_{n+2}) \in (\Sigma\cap B_m')^{n+2} : \text{for some distinct $i,j$,}\\
&\qquad\qquad\text{ $x_i \in B_m'\setminus \hat{B}_m$ and $x_j \in \tilde{B}_m$}\}.
\end{align*}

\begin{lemma}
For each $m\in\N$, $(\Sigma\cap B_m')^{n+2} = \mathscr{D}_m \cup \mathscr{E}_m \cup (\Sigma\cap \hat{B}_m)^{n+2}$.
\end{lemma}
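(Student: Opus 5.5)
The identity is a pointwise case analysis on a tuple $\textbf{x}=(x_1,\dots,x_{n+2})\in(\Sigma\cap B_m')^{n+2}$, in the same style as the proof of \eqref{eq:partition}. The inclusion ``$\supseteq$'' is immediate. By definition, $\mathscr{D}_m$ and $\mathscr{E}_m$ are subsets of $(\Sigma\cap B_m')^{n+2}$. Also $\hat{B}_m\subset B_m'$, since $P_m$ is the common centre and $2^{-m^2-4}\le 2^{-m-3}$ for $m\in\N$. So all three pieces lie in $(\Sigma\cap B_m')^{n+2}$.

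For ``$\subseteq$'' I would proceed as follows.

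\emph{Step 1.} If every $x_i$ lies in $\hat{B}_m$, then $\textbf{x}\in(\Sigma\cap\hat{B}_m)^{n+2}$ and we are done.

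\emph{Step 2.} Otherwise some $x_i\in B_m'\setminus\hat{B}_m$. If in addition every coordinate avoids $\hat{B}_m$, then $\textbf{x}\in\mathscr{D}_m$.

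\emph{Step 3.} The remaining tuples have some $x_i\in B_m'\setminus\hat{B}_m$ and some $x_j\in\hat{B}_m$, necessarily with $j\neq i$. If $x_j\in\tilde{B}_m$, then $\textbf{x}\in\mathscr{E}_m$ by definition.

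\emph{Main obstacle.} What Steps 1--3 do not cover is the configuration where every coordinate lying in $\hat{B}_m$ actually lies in the annulus $\hat{B}_m\setminus\tilde{B}_m$. Here $\tilde{B}_m$ has radius $2^{-m^2-5}$ and $\hat{B}_m$ has radius $2^{-m^2-4}$. To rule this out, I would first try to use the geometry of $\Sigma$ inside $B_m'$.

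\begin{itemize}
\item The only non-planar part of $\Sigma$ in $B_m'$ is $\zeta_m\circ G_m(B)$, which is contained in $\zeta_m(U)$.
\item Since $U=B\times[-4,4]$ with $B=B^n({\bf 0},5)$, the set $\zeta_m(U)$ lies in the ball about $P_m$ of radius $\tfrac{\sqrt{41}}{10}2^{-m^2-5}$.
\item That radius is less than $2^{-m^2-5}$, so $\zeta_m(U)\subset\tilde{B}_m$.
\item Consequently $\Sigma\cap(\hat{B}_m\setminus\tilde{B}_m)$ consists of points of $\R^n\times\{0\}$, namely points of $\Sigma\cap(B\setminus\bigcup_k B_k^*)$.
\end{itemize}

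This shows that $\Sigma$ is flat in the annulus. It does not show that the annulus misses $\Sigma$: the plane $\R^n\times\{0\}$ passes through $P_m$, so it meets $\hat{B}_m\setminus\tilde{B}_m$.

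Hence, as literally worded, a tuple with one coordinate in $B_m'\setminus\hat{B}_m$ and all its other coordinates in $\hat{B}_m\setminus\tilde{B}_m$ appears to lie in none of the three pieces. Closing this case therefore requires checking the precise definitions of $\hat{B}_m$, $\tilde{B}_m$, and $\zeta_m$ once more. If the gap is genuine, the decomposition holds only up to this set of annulus-type tuples, and I would record that the statement needs to be adjusted there rather than force it through the case analysis.
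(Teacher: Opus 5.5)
Your case analysis is the same as the paper's, and the configuration you isolate is a real hole. It is a hole in the paper's proof too. The paper splits exactly as you do: some $x_i\in B_m'\setminus\hat{B}_m$, and then either all coordinates avoid $\hat{B}_m$, or some $x_j\in\Sigma\cap\hat{B}_m$. In the second case it concludes $\textbf{x}\in(\Sigma\cap\hat{B}_m)^{n+2}$, which does not follow. Your tuples, with one coordinate in $B_m'\setminus\hat{B}_m$ and the rest in $\hat{B}_m\setminus\tilde{B}_m$, lie in none of the three pieces, so the identity fails as worded.

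One step of yours needs tightening. That the plane $\R^n\times\{0\}$ passes through $P_m$ only shows the plane meets the annulus, not that $\Sigma$ does. What you need is $(\R^n\times\{0\})\cap(\hat{B}_m\setminus\tilde{B}_m)\subset\Sigma$. This holds for the intended construction, a flat sheet with the rescaled pieces $\zeta_m\circ G_m(B)$ glued in. Something must be attached along the boundary sphere $\zeta_m(\partial B)$, of radius $2^{-m^2-6}$, for $\Sigma$ to be a manifold, and the paper treats that surrounding region as flat, e.g.\ in $\mathscr{A}_m$. It fails for the displayed formula for $\Sigma_n'$ read literally. That formula deletes $\bigcup_k B_k^*=B_1^*\supset B_m'$, which would give $\Sigma\cap B_m'=\zeta_m\circ G_m(B)\subset\tilde{B}_m$, make the lemma trivially true, and make $\Sigma_n'$ disconnected. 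So state the counterexample explicitly for the intended $\Sigma$.

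The repair is the one the paper's own first case points to, where it writes $\Sigma\cap(B_m'\setminus\hat{B}_m)\subset\Sigma\cap(B_m'\setminus\tilde{B}_m)$: redefine $\mathscr{D}_m:=(\Sigma\cap(B_m'\setminus\tilde{B}_m))^{n+2}$. Then your Steps 1--3 close:
\begin{itemize}
\item If $\textbf{x}\notin(\Sigma\cap\hat{B}_m)^{n+2}$, some $x_i\notin\hat{B}_m$.
\item If some $x_j\in\tilde{B}_m$, then $j\neq i$ automatically, and $\textbf{x}\in\mathscr{E}_m$.
\item Otherwise every coordinate lies in $B_m'\setminus\tilde{B}_m$, so $\textbf{x}\in\mathscr{D}_m$.
\end{itemize}
Your estimate $\zeta_m(U)\subset B^{n+1}(P_m,\tfrac{\sqrt{41}}{10}2^{-m^2-5})\subset\tilde{B}_m$ keeps the downstream argument valid. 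It gives $\Sigma\cap(B_m'\setminus\tilde{B}_m)\subset\R^n\times\{0\}$. Hence every $(n+2)$-point simplex in the enlarged $\mathscr{D}_m$ is degenerate and $\int_{\mathscr{D}_m}\mathcal{K}^p=0$ still holds. The bound $\mathcal{K}(\textbf{x})\lesssim 1/M_{\textbf{x}}$ on $\mathscr{E}_m$ is unchanged, since $\mathscr{E}_m$ is unchanged.
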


\begin{proof}
Let $\textbf{x} = (x_1,\dots,x_{n+2})\in (\Sigma\cap B_m')^{n+2} \setminus (\Sigma\cap \hat{B}_m)^{n+2}$. Then, there exists $i$ such that $x_i \in \Sigma \cap(B_m' \setminus \hat{B}_m)$. There are two cases, either for all $j$ we have $x_j \in \Sigma \cap(B_m' \setminus \hat{B}_m) \subset \Sigma \cap(B_m' \setminus \tilde{B}_m)$ (which implies that $\textbf{x} \in \mathscr{D}_m$), or there exists $j\neq i$ such that $x_j \in \Sigma \cap \hat{B}_m$ (which implies that $\textbf{x} \in (\Sigma\cap \hat{B}_m)^{n+2}$).
\end{proof}

Since $\Sigma\cap (B_m'\setminus \hat{B}_m) \subset \R^n\times\{0\}$, we have that
\[ \int_{\mathscr{D}_m} \mathcal{K}(\textbf{x})^p \, d\mathcal{H}^{n(n+2)}(\textbf{x}) = 0.\]
If $\textbf{x} = (x_1,\dots,x_{n+2}) \in \mathscr{E}_m$, then set 
\[ M_{\textbf{x}} := \max_{i=1,\dots,n+2}|x_i-P_m|\] 
and note that $\mathcal{K}(\textbf{x}) \lesssim 1/M_{\textbf{x}}$. Then,
\begin{align*}
\sum_{m=1}^{\infty}\int_{\mathscr{E}_m} &\mathcal{K}(\textbf{x})^p \, d\mathcal{H}^{n(n+2)}(\textbf{x})\\
&= \sum_{m=1}^{\infty}\sum_{i=1}^{m^2-m+1}\int_{ M_{\textbf{x}} \in [2^{-m-3-i},2^{-m-2-i}]} \mathcal{K}(\textbf{x})^p \, d\mathcal{H}^{n(n+2)}(\textbf{x})\\
&\lesssim \sum_{m=1}^{\infty}\sum_{i=1}^{m^2-m+1} 2^{(m+i)p}2^{-n(n+2)(m+i)}\\
&\lesssim \sum_{m=1}^{\infty} 2^{-m(n(n+2)-p)}\\
&< \infty.
\end{align*}

Thus, to complete the proof, it remains to show that
\begin{equation}\label{eq:Cestimate2}
\sum_{m\in\N}\int_{(\Sigma\cap \hat{B}_m)^{n+2}} \mathcal{K}(\textbf{x})^p \, d\mathcal{H}^{n(n+2)}(\textbf{x}) < \infty.
\end{equation}
To this end, for $m\in\N$, $w\in \bigcup_{i=0}^{m-1}\{1,2\}^m$, and $u\in\{1,2\}^m$, write 
\begin{align*}
\Sigma_{m,0} &= \overline{\Sigma\cap \hat{B}_m} \setminus (\zeta_m\circ G_m(T_{\varepsilon})),\\
\Sigma_{m,w} &= \zeta_m \circ G_{m}(\overline{T_w\setminus (T_{w1}\cup T_{w2})}),\\
\Sigma_{m,u} &= \zeta_m \circ G_{m}(T_u).
\end{align*} and note that
\[ \overline{\Sigma\cap \hat{B}_m} = \Sigma_{m,0} \cup \bigcup_{i=0}^{m-1}\bigcup_{w\in\{1,2\}^i}\Sigma_{w,m} \cup \bigcup_{u\in\{1,2\}^m}\Sigma_{u,m}.\]

Given $x \in \zeta_m\circ G_m(T_{\varepsilon})$ denote by $\textbf{w}(x)$ the word $w\in\bigcup_{i=0}^m\{1,2\}^i$ of longest length such that $x\in \Sigma_{m,w}$. Equivalently, $\textbf{w}(x)=w$ if and only if
\begin{enumerate}
\item either $|w|=m$ and $x\in \Sigma_{m,w}$,
\item or $|w| \leq m-1$, and $x\in \Sigma_{m,w} \setminus(\Sigma_{m,w1}\cup \Sigma_{m,w2})$.
\end{enumerate}
Given $(x_1,\dots,x_{n+2}) \in (\zeta_m\circ G_m(T_{\varepsilon}))^{n+2}$ write 
\[ \textbf{w}((x_1,\dots,x_{n+2})) = \min\{\textbf{w}(x_1), \dots, \textbf{w}(x_{n+2})\}.\] 

Define now the following subsets of $(\Sigma\cap \hat{B}_m)^{n+2}$:
\begin{align*}
\mathscr{F}_{m,0} &:= (\Sigma_{m,0}\cup \Sigma_{m,\varepsilon})^{n+2},\\
\mathscr{F}_{m,0}' &:= \{(x_1,\dots,x_{n+2}) \in (\Sigma\cap B_m')^{n+2} : \text{for some distinct $i,j$,}\\ 
&\qquad\qquad\text{$x_i \in \Sigma_{m,0}$ and $x_j \in \zeta_m\circ G_m(T_1\cup T_2)$}\},
\end{align*}
for $w\in\bigcup_{i=0}^{m-1}\{1,2\}^i$
\begin{align*}
\mathscr{F}_{m,w} &:= (\Sigma_{m,w}\cup \Sigma_{m,w1} \cup \Sigma_{m,w2})^{n+2},
\end{align*}
and for $w\in\bigcup_{i=0}^{m-2}\{1,2\}^i$ (if $m\geq 2$)
\begin{align*}
\mathscr{F}_{m,w}' := \{(x_1,\dots,x_{n+2}) &\in (\Sigma\cap B_m')^{n+2} : \text{$\textbf{w}((x_1,\dots,x_{n+2}))=w$ and}\\
&\text{$x_i \in \zeta_m\circ G_m(T_{w11}\cup T_{w12} \cup T_{w21} \cup T_{w2})$ for some $i$}\}.
\end{align*}

\begin{lemma}
For each $m\in\N$,
\[(\Sigma\cap \hat{B}_m)^{n+2} = \mathscr{F}_{m,0}\cup \mathscr{F}_{m,0}' \cup \bigcup_{i=0}^{m-1}\bigcup_{w\in \{1,2\}^i}\mathscr{F}_{m,w} \cup \bigcup_{i=0}^{m-2}\bigcup_{w\in \{1,2\}^i}\mathscr{F}_{m,w}'.\]
\end{lemma}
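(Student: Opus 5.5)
Fix $m\in\N$ and a tuple $\textbf{x}=(x_1,\dots,x_{n+2})\in(\Sigma\cap\hat{B}_m)^{n+2}$. We show that $\textbf{x}$ lies in one of the listed sets by descending the tree of words. The strategy mirrors the two previous partition lemmas. We use the decomposition
\[\overline{\Sigma\cap\hat{B}_m}=\Sigma_{m,0}\cup\bigcup_{i=0}^{m-1}\bigcup_{w\in\{1,2\}^i}\Sigma_{m,w}\cup\bigcup_{u\in\{1,2\}^m}\Sigma_{m,u},\]
together with the nesting $\Sigma_{m,w1}\cup\Sigma_{m,w2}\subset\zeta_m\circ G_m(T_w)$. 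The nesting follows from $T_{wi}\subset T_w$ and the injectivity of $\zeta_m\circ G_m$.

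\emph{Step 1 (top level).} If every $x_i$ lies in $\Sigma_{m,0}\cup\Sigma_{m,\varepsilon}$, then $\textbf{x}\in\mathscr{F}_{m,0}$. Otherwise some $x_j$ lies in $\zeta_m\circ G_m(T_1\cup T_2)$. In that case, if some other $x_i$ lies in $\Sigma_{m,0}$, then $\textbf{x}\in\mathscr{F}_{m,0}'$. If no coordinate lies in $\Sigma_{m,0}$, then all $x_i\in\zeta_m\circ G_m(T_\varepsilon)$, so $\textbf{w}(\textbf{x})$ is defined.

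One technicality: $\Sigma_{m,0}$ is defined with a closure. A point of $\Sigma_{m,0}\cap\zeta_m\circ G_m(\partial T_\varepsilon)$ also lies in $\Sigma_{m,\varepsilon}$. Hence assigning such boundary points to $\Sigma_{m,\varepsilon}$ is harmless, and the case split is exhaustive.

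\emph{Step 2 (induction on $w=\textbf{w}(\textbf{x})$).} Since $\textbf{w}(\textbf{x})$ is a minimum of words, we must first fix its meaning. We read it as the longest common prefix of the words $\textbf{w}(x_i)$, i.e.\ the meet in the tree; this is the only reading under which the definitions are consistent. Then every $x_i$ lies in $\zeta_m\circ G_m(T_w)$, but no single child $\zeta_m\circ G_m(T_{wk})$ contains all of them.

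\emph{Case $|w|=m$.} All coordinates lie in $\Sigma_{m,w}$. We put $\textbf{x}$ in $\mathscr{F}_{m,w^\uparrow}$, which contains $(\Sigma_{m,w^\uparrow 1}\cup\Sigma_{m,w^\uparrow 2})^{n+2}$.

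\emph{Case $|w|\le m-1$.} If every $x_i$ lies in $\Sigma_{m,w}\cup\Sigma_{m,w1}\cup\Sigma_{m,w2}$, then $\textbf{x}\in\mathscr{F}_{m,w}$. Otherwise some $x_i$ lies in a grandchild $\zeta_m\circ G_m(T_{wab})$ with $a,b\in\{1,2\}$. In particular $|w|\le m-2$, and $\textbf{x}\in\mathscr{F}_{m,w}'$.

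Note that the set listed in $\mathscr{F}_{m,w}'$ is $T_{w11}\cup T_{w12}\cup T_{w21}\cup T_{w2}$, not $T_{w22}$. We interpret it as $T_{w22}$, which is evidently a typo. Even read literally the cover survives: $T_{w22}\subset T_{w2}$, so a coordinate in $T_{w22}$ still lies in the listed set.

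\emph{Main obstacle.} The bookkeeping is the only real difficulty: pinning down $\textbf{w}(\textbf{x})$ as a meet, and handling the closure overlaps between $\Sigma_{m,w}$ and the boundaries of its children.

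We handle the overlaps as follows. Every membership we use is of the form ``$x_i\in\Sigma_{m,w}\cup\Sigma_{m,w1}\cup\Sigma_{m,w2}$'' versus ``$x_i$ lies in a grandchild.'' These two options are exhaustive for points of $\zeta_m\circ G_m(T_w)$, because
\[T_w=\overline{T_w\setminus(T_{w1}\cup T_{w2})}\cup T_{w1}\cup T_{w2},\]
and each $T_{wk}$ splits in the same way. The inclusions on the right-hand side of the lemma are immediate from the definitions, which gives equality.
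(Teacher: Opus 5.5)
Your proof of the covering inclusion $\subseteq$ is correct and follows the paper's own argument. Both first split off $\mathscr{F}_{m,0}$ and $\mathscr{F}_{m,0}'$, then set $w=\textbf{w}(\textbf{x})$ and place $\textbf{x}$ in $\mathscr{F}_{m,w}$ or $\mathscr{F}_{m,w}'$. Reading $\textbf{w}(\textbf{x})$ as the longest common prefix, and sending the case $|w|=m$ to $\mathscr{F}_{m,w^{\uparrow}}$, fill two points the paper's proof glosses over: its claim that every $x_i$ lies in $\zeta_m\circ G_m(T_w)$ needs the meet reading, and its Case 1 is not defined when $|w|=m$.

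The one thing to fix is your final sentence. The reverse inclusion is not immediate, and as literally written it is false. $\mathscr{F}_{m,0}'$ is defined as a subset of $(\Sigma\cap B_m')^{n+2}$, not of $(\Sigma\cap\hat{B}_m)^{n+2}$. So it contains tuples whose remaining coordinates lie in the flat annulus $\Sigma\cap(B_m'\setminus\hat{B}_m)$.

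The lemma should therefore be read as the inclusion $\subseteq$, or with $\mathscr{F}_{m,0}'$ intersected with $(\Sigma\cap\hat{B}_m)^{n+2}$. That inclusion is all the paper proves, and all the energy estimate uses.
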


\begin{proof}
Let $\textbf{x} = (x_1,\dots,x_{n+2}) \in (\Sigma\cap \hat{B}_m)^{n+2}\setminus \mathscr{F}_{m,0}$. Then there exists $i_1$ such that 
\[ x_{i_1} \in \bigcup_{i=1}^{m-1}\bigcup_{w\in\{1,2\}^i}\Sigma_{w,m} \cup \bigcup_{u\in\{1,2\}^m}\Sigma_{u,m} = \zeta_m\circ G_m (T_1\cup T_2).\]
If there exists $i\neq i_1$ such that $x_i \in \Sigma_{m,0}$, then $\textbf{x}\in \mathscr{F}_{m,0}'$. 

Assume for now on, that no such $i$ exists. That is, for all $i$,
\[ x_i \in \Sigma_{m,\varepsilon} \cup \zeta_m\circ G_m (T_1\cup T_2) = \zeta_m\circ G_m (T_{\varepsilon}).\]
It remains to show that $\textbf{x} \in \bigcup_{i=0}^{m-1}\bigcup_{w\in \{1,2\}^i}\mathscr{F}_{m,w} \cup \bigcup_{i=0}^{m-2}\bigcup_{w\in \{1,2\}^i}\mathscr{F}_{m,w}'$. Let $w = \textbf{w}(\textbf{x})$ and let $i_2$ such that $x_{i_2} \in \Sigma_{m,w}$. In particular, for all $i$,
\[ x_i \in \bigcup_{v\in \{1,2\}^{m-|w|}}\Sigma_{m,wu} = \zeta_m\circ G_m(T_w).\]
We now distinguish two possible cases.

\emph{Case 1.} Suppose that $x_i\in \Sigma_{m,w}\cup \Sigma_{m,w1} \cup \Sigma_{m,w2}$ for all $i$. Then $\textbf{x} \in \mathscr{F}_{m,w}$.

\emph{Case 2.} If the assumption of Case 1 fails, then there exists $j\neq i_1$ such that
\begin{align*} x_j \in \zeta_m\circ G_m(T_w) \setminus (\Sigma_{m,w}&\cup \Sigma_{m,w1} \cup \Sigma_{m,w2})\\ 
&= \zeta_m\circ G_m(T_{w11}\cup T_{w12} \cup T_{w21} \cup T_{w2})
\end{align*}
which implies that $\textbf{x} \in \mathscr{F}_{m,w}'$.
\end{proof}

By Lemma \ref{lem:C2} we have that
\[ E:= \max\{\mathcal{E}_p^{n+2}(\Sigma_{1,0}\cup \Sigma_{1,\varepsilon}),\mathcal{E}_p^{n+2}(\Sigma_{1,\varepsilon} \cup \Sigma_{1,1}\cup\Sigma_{1,2}), \mathcal{E}_p^{n+2}(\Sigma_{2,\varepsilon} \cup \Sigma_{2,1}\cup\Sigma_{2,2})\} < \infty. \]
Inequality \eqref{eq:Cestimate2} follows from four estimates.

First, by Lemma \ref{lem:mengerscale},
\begin{align*}
\sum_{m=1}^{\infty}\int_{\mathscr{F}_{m,0}}\mathcal{K}(\textbf{x})^{p} \, d\mathcal{H}^{n(n+2)}(\textbf{x})&= \sum_{m=1}^{\infty} \mathcal{E}_p^{n+2}(\Sigma_{m,0}\cup \Sigma_{m,\varepsilon})\\
&= \mathcal{E}_p^{n+2}(\Sigma_{1,0}\cup \Sigma_{1,\varepsilon})\sum_{m=1}^{\infty} 2^{-(m^2+5)(p-n(n+2))}\\
&< \infty.
\end{align*}

Second, if $\textbf{x} \in \mathscr{F}_{m,0}'$, then $\diam{\Delta(\textbf{x})} \simeq 2^{-m^2-5}$ and by $n$-regularity,
\begin{align*}
\sum_{m=1}^{\infty}\int_{\mathscr{F}_{m,0}}\mathcal{K}(\textbf{x})^{p} \,d\mathcal{H}^{n(n+2)}(\textbf{x})&\lesssim \sum_{m=1}^{\infty} 2^{p(m^2+5)}\mathcal{H}^{n(n+2)}(\Sigma\cap \hat{B}_m)\\
&\lesssim \sum_{m=1}^{\infty} 2^{-(m^2+5)(n(n+2)-p)}\\
&<\infty.
\end{align*}

Third, by Lemma \ref{lem:mengerscale},
\begin{align*}
\sum_{m=1}^{\infty}\sum_{i=0}^{m-1}\sum_{w\in\{1,2\}^i}\int_{\mathscr{F}_{m,w}}&\mathcal{K}(\textbf{x})^{p} \, d\mathcal{H}^{n(n+2)}(\textbf{x})\\ 
&\lesssim \sum_{m=1}^{\infty}\sum_{i=0}^{m-1}\sum_{w\in\{1,2\}^i} \mathcal{E}_{p}^{n+2}(\Sigma_{m,w}\cup \Sigma_{m,w1} \cup \Sigma_{m,w2})\\
&\lesssim E\sum_{m=1}^{\infty}\sum_{i=0}^{m-1} 2^i (2^{-i}2^{-m^2-5})^{n(n+2)-p}\\
&\lesssim \sum_{m=1}^{\infty} 2^{-(m^2+5)(n(n+2)-p)}\max\{1,2^{m(1-p+n(n+2))}\}\\
&<\infty.
\end{align*}

Fourth, if $\textbf{x} \in \mathscr{F}_{m,w}'$, then $\diam{\Delta(\textbf{x})} \simeq 2^{-|w|}2^{-m^2-5}$ and by $n$-regularity,
\begin{align*}
\sum_{m=2}^{\infty}\sum_{i=0}^{m-2}\sum_{w\in\{1,2\}^i}\int_{\mathscr{F}_{m,w}'}&\mathcal{K}(\textbf{x})^{p} \, d\mathcal{H}^{n(n+2)}(\textbf{x})\\
&\lesssim \sum_{m=2}^{\infty}\sum_{i=0}^{m-2} 2^i (2^{-i}2^{-m^2-5})^{n(n+2)-p}\\
&\lesssim \sum_{m=2}^{\infty} 2^{-(m^2+5)(n(n+2)-p)}\max\{1,2^{m(1-p+n(n+2))}\}\\
&<\infty.
\end{align*}
This completes the proof of \eqref{eq:Cestimate2} and the proof of Theorem \ref{thm:ex}.

\section{Linear approximation of $1$-manifolds}\label{sec:1dim}

The goal of this section is to prove Theorem \ref{prop:LAPfor1mfds2}. Along the way, we also show the following proposition of independent interest, which roughly states that if $\G$ is a topological circle with uniformly small betas, then it is locally a quasicircle in a quantitative fashion

Here and for the rest of this section, given a topological circle $\G\subset \R^n$ and $x,y \in \G$, we denote by $\Gamma(x,y)$ the component of $\Gamma \setminus \{x,y\}$ of smallest diameter. 

\begin{proposition}\label{prop:LAPimpliesBT}
Suppose that $\G \subset \R^n$ is a topological circle with the $(1,\d,R)$-LAP for some $\d\in (0,\frac{1}{8\sqrt{2}})$ and $R>0$.
Then
\begin{equation}\label{eq:1BT}
\diam{\G(x,y)} \leq \frac1{(1-8\sqrt{2}\d)^2}|x-y|, \qquad\text{for all $x,y \in \Gamma$ with $|x-y| < R/4$}.
\end{equation}
\end{proposition}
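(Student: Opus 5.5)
The plan is a multiscale argument. We show that at every scale, one of the two arcs of $\G\setminus\{x,y\}$ is trapped in a thin tube around a line. The constant $(1-8\sqrt2\d)^{-2}$ should then come from summing a geometric series of small errors, each of relative size about $\d$.

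Fix $x,y\in\G$ and set $r=|x-y|<R/4$. By the $(1,\d,R)$-LAP applied to $B^n(x,2r)$, there is a line $L$ through $x$ with $\G\cap B^n(x,2r)\subset N_{2\d r}(L)$. Since $y$ lies in this tube, $L$ makes an angle $O(\d)$ with the segment $[x,y]$. Let $\pi$ denote the orthogonal projection onto $L$. Inside $B^n(x,2r)$, the set $\G$ is then a union of strands lying in a cylinder of radius $2\d r$ about $L$.

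The first step is a topological separation claim. Let $\G_1,\G_2$ be the two arcs of $\G$ joining $x$ and $y$. We claim that at least one of them stays in the slab
\[ S=\{z:\ \pi(z)\in[\pi(x)-c\d r,\ \pi(y)+c\d r]\}\cap N_{2\d r}(L). \]
Suppose instead that both arcs leave $S$. Each arc then has a subarc that starts in the middle portion of the tube, travels past one end of the slab, and returns; equivalently, it has a sub-arc whose projection under $\pi$ reaches a point beyond the end and then comes back.

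The second step derives a contradiction from the LAP applied at a smaller ball. Take a point $z$ on such an excursion where $\pi$ attains an extremal value near the end of the slab. Consider the ball $B^n(z,\rho)$ with $\rho$ comparable to $r$ (for example $\rho=r/2$, which is still less than $R$). Inside it, both the excursion and the other arc pass through a fixed cross-section near $z$, and the excursion must also pass back through it. So $\G\cap B^n(z,\rho)$ contains several strands that cross a short cross-section of the tube.

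By itself this configuration is consistent with small $\beta$: several nearly parallel strands are flat. What must be shown instead is that the strands cannot all close up consistently. An arc leaving through an end must eventually return to $y$ or $x$. Following the projection $\pi$ along $\G$, as a continuous real-valued function on the circle, forces a point at which $\G$ has points in $B^n(z,\rho)$ at distance about $\rho$ from $z$ in a direction transverse to $L$. That would contradict $\beta^1_\G(B^n(z,\rho))<\d$.

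This step is the main obstacle. The first approach to try is an intermediate value argument for $\pi$ restricted to $\G$. Since $\G$ is a circle, $\pi|_\G$ achieves a maximum at some point $z_{\max}$. Near $z_{\max}$, the curve must approach and leave from the same side. One then needs to exhibit a transverse displacement, using that $\G$ is a single embedded circle and that $\d<\frac1{8\sqrt2}$.

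Granting the claim, the arc $\G_i$ lying in $S$ has diameter at most
\[ \sqrt{(|x-y|+2c\d r)^2+(4\d r)^2}. \]
A crude version of this only gives $\diam\G(x,y)\le (1+O(\d))r$ if the slab bound is sharp, which is what we want. If instead the argument only gives containment in $B^n(x,(1+c'\d)r)$ at one scale, we iterate. The part of the arc beyond $\pi(y)$ is handled by applying the same reasoning to $y$ and the endpoint of the arc's excursion, at scale $c\d r$. Summing the resulting bounds $r\sum_k (c\d)^k$, the chosen constants ($8\sqrt2$, entering through the $\sqrt2$ from combining tube width and length) should give the bound $\frac{1}{(1-8\sqrt2\d)^2}|x-y|$ in \eqref{eq:1BT}. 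Finally, since $\G(x,y)$ is by definition the arc of smaller diameter, its diameter is at most that of $\G_i$, which finishes the proof.
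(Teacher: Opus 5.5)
There is a genuine gap, and it sits exactly at the step you call the main obstacle. Your ``topological separation claim'' (one of the two arcs joining $x$ and $y$ stays in a slab of length $(1+O(\delta))|x-y|$) is essentially the proposition itself, and the route you sketch toward it points the wrong way. You do reach the relevant configuration: several disjoint strands of $\Gamma$ crossing a ball inside the thin tube. But you discard it because ``nearly parallel strands are flat''. Instead you look for a transverse displacement of size comparable to $\rho\sim r$ near a turning point $z$ of $\pi|_\Gamma$, and nothing at that scale can produce one. A priori $\Gamma$ could turn back at $z$ as a thin hairpin: two strands leave $z$ in the same direction along $L$, at mutual distance $s(t)\le\delta t/2$ at distance $t$ from $z$. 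Such a fold stays in the $\delta\rho/2$-neighborhood of the line through $z$ parallel to $L$ in every ball $B^n(z,\rho)$. It is a doubled half-line; compare the half-plane example in the introduction. In every ball of radius much larger than $s$ it is only a thin bundle. So no intermediate value or ``closing up'' argument run at scale $\sim r$, or centered at the tip, yields a contradiction. What excludes the fold is the LAP at the scale of the separation, centered on one strand away from the tip. In $B^n(q,2s)$ with $q$ on one strand, both strands cross the ball and $\beta^1_\Gamma(B^n(q,2s))$ is of order $1/2$, far above $\delta$. Nearly parallel strands are flat only at scales much larger than their separation, and the LAP is assumed at every center and every radius below $R$.

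This is exactly the content of Lemma \ref{lem:continua}, the ingredient your proposal is missing. Suppose $x\in\Gamma$ and $\Gamma\cap B^n(x,r)$ lies in the $\delta r$-tube about a line through $x$. Apply the LAP at the radii $\lambda^k r$ with $\sqrt2\delta<\lambda<1$; consecutive caps match up because $2\sqrt{\lambda^2-\delta^2}>2\delta\lambda$. This shows that every continuum in $\Gamma\cap B^n(x,r)$ joining the two caps passes through $x$, so two disjoint such arcs cannot exist. Lemma \ref{lem:curve-endpoints} then shows that the component through the center joins both caps, by applying Lemma \ref{lem:continua} in a ball of radius $r/3$ around the midpoint of a putative fold.

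The proposition is then a one-scale contradiction, with no iteration. Suppose some $z\in\Gamma(x,y)$ has $|x-z|>(1+\epsilon)|x-y|$, and put $r=(1+\epsilon)|x-y|$. The component of $\Gamma\cap B^n(x,r)$ through $x$ crosses. So the component through $y$ either also crosses, contradicting Lemma \ref{lem:continua}, or folds back. In the latter case a ball of radius $\frac{\epsilon}{4(1+\epsilon)}r$ about the midpoint of the fold has relative tube width $\frac{8(1+\epsilon)\delta}{\epsilon}$. This is below $1/\sqrt2$ precisely when $1+\epsilon>(1-8\sqrt2\delta)^{-1}$, which is where $8\sqrt2$ comes from. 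The square comes from applying the resulting two-point bound $\max\{|z-x|,|z-y|\}\le(1-8\sqrt2\delta)^{-1}|x-y|$ twice, first to the pair $x,y$ and then to the pair $x,z$. Your final paragraph, which says the constant ``should'' emerge from summing $r\sum_k(c\delta)^k$, does not derive it.
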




In particular, \cite{TuVa} implies that a curve $\Gamma$ as in Proposition \ref{prop:LAPimpliesBT} is a quasicircle.

The following remark is elementary.

\begin{remark}\label{rem:geometry}
Let $\ell \subset \R^n$ be a line, let $\delta<1/\sqrt{2}$, let $x\in\ell$, and let $r>0$. Then $\partial B^n(x,r) \cap N_{\d r}(\ell)$ contains exactly two components (which are topological $(n-1)$-balls) such that $\diam{D_i} = 2\delta r$ and $\dist(D_1,D_2) = 2\sqrt{1-\delta^2}r$.
\end{remark}

\begin{lemma}\label{lem:continua}
Let $\delta<1/\sqrt{2}$, let $R>0$, and let $X\subset \R^n$ be a closed set with the $(1,\d,R)$-LAP. Let $\ell \subset \R^n$ be a line containing $x$ such that
 \[ \sup\{\dist(z,\ell) : z\in B^n(x,r)\cap X\} < \delta r.\]
Let $D,D'$ be the two components of $\partial B^n(x,r) \cap N_{\d r}(\ell)$. If $X_1,X_2 \subset X\cap B^n(x,r)$ are two continua that intersect both $D$ and $D'$, then $X_1\cap X_2 \neq \emptyset$.
\end{lemma}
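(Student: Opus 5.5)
The plan is to argue by contradiction, using the $(1,\d,R)$-LAP at a small scale chosen so that the two continua are forced to come closer than they actually are. Suppose $X_1\cap X_2=\emptyset$. Both are compact, so $d:=\dist(X_1,X_2)>0$, and we fix $a\in X_1$, $b\in X_2$ with $|a-b|=d$.

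\emph{Step 1 (coarse picture at scale $r$).} Both continua lie in $X\cap B^n(x,r)\subset N_{\d r}(\ell)$ and meet both caps $D,D'$. By Remark \ref{rem:geometry} the caps are separated along $\ell$ by about $2\sqrt{1-\d^2}\,r$. For every $t$ in the corresponding interval, the hyperplane $H_t$ perpendicular to $\ell$ at the point of $\ell$ with coordinate $t$ therefore separates $D$ from $D'$. Hence each $X_i$ meets every such $H_t$, and $H_t\cap X_i\subset B_{\d r}$ (a small disk in $H_t$). This gives $d\le 2\d r$, and it lets us choose the minimizing pair $(a,b)$ away from the caps if needed.

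\emph{Step 2 (zoom in).} Set $\rho=2d$, which is much smaller than $r$ and than $R$, and apply the LAP at $a$. This gives a line $\ell'$ through $a$ with $X\cap B^n(a,\rho)\subset N_{\d\rho}(\ell')$. In particular $b$ lies within $\d\rho$ of $\ell'$. Let $H$ be the hyperplane through $a$ perpendicular to $\ell'$. Since $d\ll r$ and the tube forces $\ell'$ to be nearly parallel to $\ell$, $H$ cuts $B^n(a,\rho)$ with the two caps of $\partial B^n(a,\rho)\cap N_{\d\rho}(\ell')$ on opposite sides. (Compare $\ell'$ and $\ell$ on a chain of intermediate scales if the single-scale comparison is not enough.)

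\emph{Step 3 (the crossing).} The goal is to show that $X_2\cap B^n(a,\rho)$ contains a subcontinuum through $b$ running from one cap of $B^n(a,\rho)$ to the other. Such a subcontinuum must meet $H$ at a point $c\in N_{\d\rho}(\ell')\cap H$, which gives
\[
|c-a|\le \d\rho=2\d d<d,
\]
contradicting minimality of $d$, since $\d<1/\sqrt2$ and we may shrink to the regime $\d<1/2$.

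\emph{Main obstacle.} The expected difficulty is exactly the claim in Step 3. Boundary bumping only guarantees that the component of $X_2\cap B^n(a,\rho)$ containing $b$ reaches $\partial B^n(a,\rho)$; it does not guarantee that it reaches both caps, or the cap on the opposite side of $H$ from $b$.

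My first attempt will avoid this by replacing the point $b$ with the global crossing from Step 1. Let $H_{t_0}$ be the hyperplane perpendicular to $\ell$ through $a$. Because $X_2$ joins $D$ to $D'$, it meets $H_{t_0}$ at some point $c'$. Then run the minimal-distance argument among pairs lying on a common slice $H_t$, i.e.\ minimize $|a'-b'|$ over $a'\in X_1$, $b'\in X_2$ with the same $\ell$-coordinate. The LAP at $a'$ with radius comparable to that slice distance, together with the fact that both continua cross every nearby slice, should then yield a pair on a nearby slice that is strictly closer, giving the contradiction. Controlling the tilt between $\ell'$ and $\ell$ here is where the hypothesis $\d<1/\sqrt2$ should enter.
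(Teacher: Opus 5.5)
There is a genuine gap, and it is the step you flagged yourself: the crossing claim in Step 3 is essentially the whole content of the lemma, not a technicality, and neither of your routes to it works. Once $\rho\ll r$, the scale-$r$ tube, of width $\delta r\gg\rho$, says nothing about the direction of $\ell'$. So the assertion in Step 2 that $\ell'$ is nearly parallel to $\ell$ is unfounded. Chaining through intermediate scales does not fix this: the LAP lets the best line turn by a fixed angle at every scale, and these turns accumulate. (A logarithmic spiral of large pitch has uniformly small $\beta^1$ numbers, yet its approximating lines at the center rotate without bound as the scale decreases.) Your slice-minimization fallback compares $\ell'$ with slices $H_t$ perpendicular to $\ell$, so it needs exactly this tilt control, which the LAP does not provide. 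Independently, the nearest pair $(a,b)$ may sit next to a cap. There $X_2\subset B^n(x,r)$ can simply end near $b$ rather than cross $B^n(a,\rho)$, and your remark that the pair can be chosen away from the caps is not justified. A minor point: you cannot ``shrink to $\delta<1/2$'', since the LAP with parameter $\delta$ does not imply it with a smaller one. Taking $\rho\in(d,d/\delta)$ instead of $\rho=2d$ avoids the need.

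The missing idea is to zoom at the center $x$ itself with concentric balls. There the crossing passes from one scale to the next automatically, and no comparison between lines at different scales is needed. Fix $\lambda\in(\sqrt2\delta,1)$. Suppose $Y\subset X\cap B^n(x,s)$ is a continuum meeting both caps of $\partial B^n(x,s)\cap N_{\delta s}(\ell_s)$, where $\ell_s\ni x$ and $X\cap B^n(x,s)\subset N_{\delta s}(\ell_s)$.
\begin{enumerate}
\item Since $\lambda s>\delta s$, removing the open ball of radius $\lambda s$ about $x$ disconnects the tube $N_{\delta s}(\ell_s)\cap B^n(x,s)$ into two pieces, one containing each cap. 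A standard separation argument then gives a subcontinuum of $Y\cap B^n(x,\lambda s)$ meeting both components $D^1,D^2$ of $\partial B^n(x,\lambda s)\cap N_{\delta s}(\ell_s)$.
\item Apply the LAP at $x$ at scale $\lambda s$. Then $X\cap\partial B^n(x,\lambda s)$ lies in two new caps, each of diameter $2\delta\lambda s$. Meanwhile $\dist(D^1,D^2)=2s\sqrt{\lambda^2-\delta^2}>2\delta\lambda s$, which is exactly where $\delta<1/\sqrt2$ is used. Hence the subcontinuum meets both new caps.
\end{enumerate}
Iterating with $s=\lambda^k r$ for both $X_1$ and $X_2$, each meets $B^n(x,\lambda^k r)$ for every $k$. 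Being closed, each therefore contains $x$. This is the paper's proof. It gives the stronger conclusion $x\in X_1\cap X_2$ and needs no nearest pair and no tilt control, because the lines $\ell_k$ may rotate freely from scale to scale.
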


\begin{proof}
Let $X$, $x$, $r$, $\ell$, $X_1$, and $X_2$ be as above. Fix $\lambda \in (\sqrt{2}\delta,1)$. 

Set $\ell_1 := \ell$, $r_1:=r$, $D_1:=D$, and $D_1':=D'$.

We proceed inductively. Assume that for some $k\in \N$ we have defined a line $\ell_k \subset \R^n$ such that
\begin{enumerate}
\item $\sup\{\dist(z,\ell_k) : z\in B^n(x,\lambda^{k-1}r)\cap X\} < \delta \lambda^{k-1}r$,
\item there exist two continua 
\[ X_{1,k}\subset X_1\cap B^n(x,\lambda^{k-1}r) \quad\text{and}\quad X_{2,k}\subset X_2\cap B^n(x,\lambda^{k-1}r)\] 
that both intersect both components $D_{k},D_k'$ of $\partial B^n(x,\lambda^{k-1}r) \cap N_{\d \lambda^{k-1}r}(\ell_k)$. 
\end{enumerate}

By the $(1,\delta,R)$-LAP, there exists a line $\ell_{k+1} \subset \R^n$ containing $x$ such that 
\[ \sup\{\dist(z,\ell_{k+1}) : z\in B^n(x,\lambda^{k}r)\cap X\} < \delta \lambda^{k}r. \]
Let $D_{k+1},D_{k+1}'$ be the two  components of $\partial B^n(x,\lambda^{k}r) \cap N_{\d \lambda^{k}r}(\ell_{k+1})$, and let $D^1,D^2$ be the two components of $\partial B^n(x,\lambda^{k}r) \cap N_{\d \lambda^{k}r}(\ell_{k})$. Then there exist continua $X_{1,k+1}\subset X_{1,k}\cap B^n(x,\lambda^{k}r)$ and $X_{2,k+1}\subset X_2\cap B^n(x,\lambda^{k}r)$ that both intersect $D^1$ and $D^2$.

Since $\delta < 1/\sqrt{2}$ and $\lambda> \sqrt{2}\delta$, we have that $\delta^2\lambda^2 < \lambda^2 - \delta^2$ which, along with Remark \ref{rem:geometry}, gives
\begin{align*}
\dist(D^1,D^2) = 2\sqrt{\lambda^2-\delta^2}\lambda^{k-1}r > 2\delta \lambda^{k}r = \diam{D_{k+1}} = \diam{D_{k+1}'}.
\end{align*}
Therefore, each of $D_{k+1},D_{k+1}'$ intersects exactly one of $D^1,D^2$. Hence, each of $X_{k+1,1},X_{k+1,2}$ intersects both of $D_{k+1},D_{k+1}'$. This proves the inductive step.

Consequently, for any $k\in \N$ we have that $X_1 \cap B^n(x,\lambda^k r) \neq \emptyset$ and $X_2 \cap B^n(x,\lambda^k r) \neq \emptyset$ which gives that $x \in X_1\cap X_2$.
\end{proof}

\begin{lemma}\label{lem:curve-endpoints}
Suppose that $\G \subset \R^n$ is a topological circle with the $(1,\d,R)$-LAP for some $\d\in (0,\frac{1}{8\sqrt{2}})$ and $R>0$. Let $x\in \G$, let $r\in (0,R)$, and let $\ell \subset \R^n$ be a line containing $x$ such that
 \[ \sup\{\dist(z,\ell) : z\in B^n(x,r)\cap \G\} < \delta r.\]
If $\g$ is the component of $\G\cap B^n(x,r)$ that contains $x$, then $\g$ intersects both components $D,D'$ of $\partial B^n(x,r) \cap N_{\d r}(\ell)$.
\end{lemma}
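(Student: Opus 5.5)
The plan is a proof by contradiction. The key tool is Lemma \ref{lem:continua}, applied at a small scale around a point near $x$, to rule out the possibility that both "halves" of $\G$ through $x$ leave $B^n(x,r)$ on the same side.

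\emph{Step 1 (reduction to two arcs).} Since $\G$ is a topological circle, removing $x$ leaves an open arc. Follow $\G$ from $x$ in each of the two directions until the first time it meets $\partial B^n(x,r)$. This gives two arcs $\g_1,\g_2\subset \g$ with $\g_1\cap\g_2=\{x\}$. Each $\g_i$ joins $x$ to a point of $\G\cap\partial B^n(x,r)$, and that point lies in $D\cup D'$ because $\G\cap B^n(x,r)\subset N_{\d r}(\ell)$.

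The degenerate case $\G\subset \operatorname{int}B^n(x,r)$ must be handled separately. There I would replace $\partial B^n(x,r)$ by a sphere $\partial B^n(x,r_0)$ with $r_0<r$ comparable to $\diam\G$ and run the same argument, or show directly that it is impossible. The reason is that a closed curve inside a $\d r$-tube through $x$ must double back, and the doubling-back argument of Step 3 applies to it verbatim.

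So assume, for contradiction, that $\g$ misses $D'$. Then both $\g_1$ and $\g_2$ end in $D$.

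\emph{Step 2 (both arcs leave every small ball on the same side).} By the $(1,\d,R)$-LAP, for every $s\in(0,r]$ there is a line $\ell_s\ni x$ with $\G\cap B^n(x,s)$ inside the $\d s$-tube around $\ell_s$. The nested-scale argument from the proof of Lemma \ref{lem:continua} (with $\lambda\in(\sqrt2\d,1)$, using Remark \ref{rem:geometry}) shows the following: the components of $\partial B^n(x,\lambda^k r)\cap N_{\d\lambda^k r}(\ell_{\lambda^k r})$ can be labelled consistently with $D$ and $D'$ as $k$ increases. Moreover, a continuum joining $x$ to $D$ inside $\G\cap B^n(x,r)$ must cross each sphere $\partial B^n(x,\lambda^k r)$ in the component labelled "$D$". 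Since $\g_1$ and $\g_2$ are such continua, for every small $t$ both arcs meet $\partial B^n(x,t)$ inside the same cap, which has diameter $2\d t$.

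\emph{Step 3 (contradiction via Lemma \ref{lem:continua}).} Fix a small $t$ and pick $y\in\g_1\cap\partial B^n(x,t)$ in that common cap. By Step 2 there is $z\in\g_2$ with $|z-y|\le 2\d t$.

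Now consider the ball $B^n(y,t/2)$ together with a LAP line $\ell'$ through $y$. Since $\d<\frac1{8\sqrt2}$, the point $z$ is deep inside this ball. Each $\g_i$ enters $B^n(y,t/2)$ from the side of $x$, because $\g_i$ passes through $x$ and $x\notin B^n(y,t/2)$. Each $\g_i$ also exits on the far side, because by Step 2 applied at scale $2t$ it continues outward to $\partial B^n(x,2t)$.

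By the boundary bumping theorem, the component of $\g_i\cap B^n(y,t/2)$ through $y$ (respectively through $z$) meets $\partial B^n(y,t/2)$. To apply Lemma \ref{lem:continua} we need a subcontinuum of $\g_i\cap B^n(y,t/2)$ meeting both caps of $\partial B^n(y,t/2)\cap N_{\d t/2}(\ell')$. Lemma \ref{lem:continua} then gives that these subcontinua of $\g_1$ and $\g_2$ intersect, which is a contradiction since $\g_1\cap\g_2=\{x\}$ and $x\notin B^n(y,t/2)$.

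\emph{Main obstacle.} The delicate point is Step 3, specifically producing, inside $\g_i\cap B^n(y,t/2)$, a single subarc that crosses from one cap to the other. I would obtain it by taking the subarc of $\g_i$ between its last entry into $B^n(y,t/2)$ before reaching the region near $y$ and its first exit afterwards. Showing that its endpoints lie in opposite caps is where the smallness of $\d$ is used: the cap facing $x$ lies at distance about $t/2$ from the cap facing away, and each arc must traverse the tube monotonically at scale $t$ by Step 2.
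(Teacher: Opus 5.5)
There is a genuine gap in Step 3, at the point you flag yourself.

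\textbf{Where the argument breaks.} To invoke Lemma \ref{lem:continua} in $B^n(y,t/2)$, you need, for each $i$, a subcontinuum of $\gamma_i\cap B^n(y,t/2)$ that meets \emph{both} caps of $\partial B^n(y,t/2)\cap N_{\delta t/2}(\ell')$. Your candidate is the component of $\gamma_i\cap B^n(y,t/2)$ through $y$ (resp.\ $z$). Boundary bumping only shows that this component reaches the sphere; both of its ends could lie in the same cap.

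For $\gamma_1$ this component is exactly the component of $\Gamma\cap B^n(y,t/2)$ through the center $y$. Showing that it meets both caps is therefore precisely Lemma \ref{lem:curve-endpoints} for the ball $B^n(y,t/2)$, so the argument is circular.

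Step 2 does not supply the ``monotone traversal'' you invoke either. Note first that Step 2 does not follow from the nested-scale argument of Lemma \ref{lem:continua}. That argument concerns continua joining the two caps, not continua joining the center to one cap; you would need a separate separation argument in the annuli. Even granting Step 2, it only says that each $\gamma_i$ meets every sphere $\partial B^n(x,\lambda^k r)$ \emph{somewhere} in the cap labelled $D$. It does not stop an arc from turning back inside $B^n(y,t/2)$, and turning back is exactly the behaviour the lemma is meant to exclude.

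\textbf{The missing idea.} You need to force a crossing by a global side-to-side argument, not by local behaviour at a prescribed point. Once you have that, Step 2, the small scale $t$, and the point $z$ are all unnecessary. The paper works directly at scale $r$:
\begin{enumerate}
\item Suppose both endpoints $x_1,x_2$ of $\gamma$ lie in $D$. Let $\pi$ be the projection onto $\ell$, with $\pi(x_1)$ between $\pi(x)$ and $\pi(x_2)$. The projections of both branches of $\gamma$ from $x$ then cover $[\pi(x),\pi(x_1)]$, which has length at least $r\sqrt{1-\delta^2}$.
\item Take $w'\in\gamma$ lying over the midpoint of this segment. Then $B^n(w',r/3)\subset B^n(x,r)$ and $x\notin B^n(w',r/3)$. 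Moreover, $\Gamma\cap B^n(w',r/3)$ lies within $2\delta r$ of the line through $w'$ parallel to $\ell$, i.e.\ with flatness parameter $6\delta<1/\sqrt2$.
\item Points of that tube outside $\operatorname{int}B^n(w',r/3)$ split into two disjoint closed sides. Each branch starts on one side (at $x$) and ends on the other (at $x_1$ or $x_2$).
\item Hence the subarc of a branch, from the last time it lies outside $\operatorname{int}B^n(w',r/3)$ on the side of $x$ to the next time it leaves $\operatorname{int}B^n(w',r/3)$, crosses the ball from cap to cap.
\item Lemma \ref{lem:continua} then makes the two crossing subarcs meet at a point other than $x$. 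This contradicts the fact that the two branches of $\gamma$ meet only at $x$.
\end{enumerate}

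Your concern about the degenerate case $\Gamma\subset\operatorname{int}B^n(x,r)$ is legitimate; the paper tacitly assumes $\gamma$ is an arc with endpoints on the sphere. The same midpoint-ball argument rules this case out. Apply it to the two arcs of $\Gamma$ joining points of extreme projection onto a LAP line at scale $\min\{r,\diam\Gamma\}$.
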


\begin{proof}
Assume, for a contradiction, that $\g$ has endpoints $x_1,x_2$ only on one of $D,D'$; say on $D$. Let $\pi : \R^n \to \ell$ be the orthogonal projection on $\ell$. Without loss of generality, we may assume that $\pi(x_1)$ is between $\pi(x)$ and $\pi(x_2)$. Let $w$ be the midpoint of the line segment $[\pi(x),\pi(x_1)]$ and note that 
\[ \tfrac12r\sqrt{1-\delta^2} \leq |w-x| \leq \tfrac12 r.\] 
Let also $w' \in \g$ such that $\pi(w')=w$. Then
\[ B^n(w',r/3) \subset B^n(w', r(1-\sqrt{\tfrac14 + \d^2})) \subset B^n(x,r)\]
and if $\ell'$ is the line passing from $w$ and parallel to $\ell$, then 
\[ \dist(z,\ell') \leq 2\d r = (6\d)\frac{r}3, \quad\text{for all $z\in \Gamma\cap B^n(w',r/3)$}\]
with $6\d < 1/\sqrt{2}$. 

Now, $\g \cap B^n(w',r/3)$ contains two components, $\g_{1}$ and $\g_{2}$, that each intersect both components of $N_{2\d r}(\ell')\cap \partial B^n(w',r/3)$; one is contained in $\Gamma(x_1,x)$ and the other in $\Gamma(x_2,x)$. These two components cannot intersect, because $x\notin B^n(w',r/3)$. On the other hand, Lemma \ref{lem:continua} says that they must intersect, yielding the contradiction.
\end{proof}

We can now show Theorem \ref{prop:LAPfor1mfds2}.

\begin{proof}[{Proof of Theorem \ref{prop:LAPfor1mfds2}}]
Suppose that $\Gamma \subset\R^n$ is a 1-manifold with the $(1,\d,R)$-LAP for some $\d \in (0,\frac{1}{8\sqrt{2}})$ and $R>0$. Fix $x\in \G$, $r\in (0,R)$, and (by compactness) a line $\ell \subset \R^n$ that contains $x$ such that

\[ \sup\{\dist(z,\ell) : z \in B^n(x,r)\cap \G\} =\beta^1_{\G}(B^n(x,r)) < \d r.  \]
By Lemma \ref{lem:curve-endpoints}, the component $\g$ of $\G\cap B^n(x,r)$ that contains $x$, intersects both components $D,D'$ of $\partial B^n(x,r) \cap N_{\d r}(\ell)$. Setting $\beta = \beta_{\G}^1(x,r)$, elementary geometric calculations give
\begin{align*}
\theta^1_{\G}(B^n(x,r)) &\leq r^{-1}\inf_{z\in \ell\cap B^n(x,r)} \dist(z,\G\cap B^n(x,r))\\ 
&\leq r^{-1}\inf_{z\in \ell\cap B^n(x,r)} \dist(z,\g)\\ 
&\leq \sqrt{2}\sqrt{1- \sqrt{1-\beta^2}}\\
&\frac{\sqrt{2}\beta}{\sqrt{1+ \sqrt{1-\beta^2}}}.
\end{align*}
Using simple algebra,
\begin{align*}
\theta^1_{\G}(B^n(x,r)) \leq \beta\sqrt{\frac{2}{1+ \sqrt{1-\beta^2}}}\leq \beta\sqrt{\frac{2}{1+ \sqrt{1-\d^2}}} &\leq \beta \sqrt{\frac{2}{2- \delta^2}}\\
&\leq \beta \sqrt{1+\delta^2}\\
&\leq \beta(1+\delta^2/2).\qedhere
\end{align*}
\end{proof}

We finish with the proof of Proposition \ref{prop:LAPimpliesBT}.

\begin{proof}[Proof of Proposition \ref{prop:LAPimpliesBT}]
To show \eqref{eq:1BT}, it is enough to show that for all $x,y \in \Gamma$ with $|x-y| < R/2$ and all $z\in\G(x,y)$
\begin{equation}\label{eq:2pt}
\max\{|z-x|,|z-y|\} \leq \frac{1}{1-8\sqrt{2}\d}|x-y|.
\end{equation}
Indeed, let $x,y \in \G$ such that $|x-y|<R/4$ and let $w,z \in \G(x,y)$ such that $|w-z| = \diam{\G(x,y)}$. Without loss of generality, we may assume that $w$ is between $x$ and $z$. By \eqref{eq:2pt} we have that
\[ |x-z| \leq \frac{1}{1-8\sqrt{2}\d}|x-y| < \frac{1}{1-8\sqrt{2}\d}\frac{R}4 < \frac{R}2. \]
Applying \eqref{eq:2pt} twice, 
\[ \diam{\G(x,y)} = |w-z|\leq \frac{1}{1-8\sqrt{2}\d}|x-z| \leq \frac{1}{(1-8\sqrt{2}\d)^2}|x-y|. \]

We now turn to the proof of \eqref{eq:2pt}. Towards a contradiction, assume that there exist $x,y \in \Gamma$ and $z\in \G(x,y)$ such that $|x-z| > (1+\e) |x-y|$ for some $\e> \frac{8\sqrt{2}}{1-8\sqrt{2}\d}$. Let $r=(1+\e)|x-y|$. By the $(1,\d,R)$-LAP, there exists a line $\ell\subset R^n$ containing $x$ such that
\[\sup\{\dist(z,\ell) : z \in B^n(x,r)\cap \Gamma\} \leq \delta r.\]
Since $\d < 1/\sqrt{2}$, the intersection  $\partial B^n(x,r) \cap N_{\d r}(\ell)$ contains exactly two components (two topological $(n-1)$-balls) $D_1$ and $D_2$. By the $(1,\d,R)$-LAP, $\G\cap \partial B^n(x_1,r) \subset D_1\cup D_2$.

Let $\g_1$ and $\g_2$ be the components of $\G\cap B^n(x,r)$ that contain $x$ and $y$ respectively. Since $z\not\in B^n(x,r)$, we have $\g_1 \neq \g_2$. By Lemma \ref{lem:curve-endpoints}, we know that $\g_1$ has endpoints in both $D_1$ and $D_2$. There are two possible cases to consider.

\emph{Case 1.} Suppose that $\g_2$ also has endpoints in both $D_1$ and $D_2$. Then by Lemma \ref{lem:continua}, we get that $\g_1\cap \g_2 \neq \emptyset$ which is a contradiction.

\emph{Case 3.} Suppose that $\g_2$ has endpoints only on one of $D_1,D_2$. We work as in Lemma \ref{lem:curve-endpoints}. Without loss of generality, assume that both endpoints of $\g_2$ are on $D_2$. As with Case 2, we may assume that both endpoints of $\g_2$ (denoted by $y_1,y_2$) are on $D_1$ and we let $\pi : \R^n \to \ell$ be the orthogonal projection on $\ell$. Assume, as we may, that $\pi(y_1)$ is between $\pi(y)$ and $\pi(y_2)$. We have that
\begin{align*}
|\pi(y)-\pi(y_i)| &\geq |x-\pi(y_i)| -|x-\pi(y)|\\ 
&\geq |x-y_i|-|y_i-\pi(y_i)| -|x-y|-|y-\pi(y)|\\ 
&\geq r\left(\frac{\e}{1+\e}-2\d \right).
\end{align*}

Let $w$ be the midpoint of $[\pi(y),\pi(y_i)]$ and let $w'\in \g_2$ such that $\pi(w')=w$. Note that
\begin{align*} 
\dist(w',\partial B^n(x,r)) \geq \dist(w,\partial B^n(x,r)) - |w-w'| &\geq \frac12r\left(\frac{\e}{1+\e}-2\d \right) - \d r\\ 
&= r\left(\frac{\e}{2(1+\e)}-\d \right)\\ 
&> \frac{\e}{4(1+\e)}r
\end{align*}
so
\[ B^n(w',\tfrac{\e}{4(1+\e)}r) \subset B^n(x,r).\]

If $\ell'$ is the line parallel to $x$ passing from $w$, then 
\[ \dist(z,\ell') \leq 2\d r = \left(\frac{8(1+\e)\d}{\e}\right)\frac{\e}{4(1+\e)}r, \quad\text{for all $z\in \Gamma\cap B^n(w',\tfrac{\e}{4(1+\e)}r)$}\]
and by our assumption, $\frac{8(1+\e)\d}{\e} < 1/\sqrt{2}$. Now, $\g_2 \cap B^n(w',\tfrac{\e}{4(1+\e)}r)$ contains two components $\g_{2,1}$ and $\g_{2,2}$ that intersect both components of 
\[ N_{2\d r}(\ell')\cap \partial B^n(w',\tfrac{\e}{4(1+\e)}r).\] By Lemma \ref{lem:continua}, these two components must intersect, which is false.
\end{proof}


\bibliographystyle{alpha}

\bibliography{bibliography}

\end{document}